\documentclass[reqno, a4paper]{amsart}

\usepackage[english]{babel}
\selectlanguage{english}
\usepackage{t1enc}
\usepackage{xcolor}
\usepackage{amscd,amsfonts}
\usepackage{amsmath,amsthm,amssymb,esint,enumerate}
\usepackage{txfonts,graphicx}
\usepackage{tikz}
\usetikzlibrary{arrows,shapes}

\theoremstyle{plain}
\newtheorem{thm}{Theorem}

\newtheorem{lem}[thm]{Lemma}

\theoremstyle{definition}
\newtheorem{definition}[thm]{Definition}

\newtheorem{ex}[thm]{Example}
\pagestyle{plain}

\setlength{\topmargin}{-0.5cm}
\setlength{\textheight}{24cm}
\setlength{\textwidth}{16cm}
\setlength{\oddsidemargin}{0.5cm}
\setlength{\evensidemargin}{0.5cm}

\newcommand{\m}{\mathbf} 

\newcommand{\rd}{{/}}
\newcommand{\ld}{{\backslash}}
\newcommand{\ra}{\mathbin{\rightarrow}}
\newcommand{\jn}{\vee}
\newcommand{\mt}{\wedge}
\renewcommand{\ln}{{\sim}}

\newcommand{\da}{\mathord{\downarrow}}

\newcommand{\ls}{\setbox0\hbox{$-$}
\mathbin{\hbox{$-$\kern-\wd0\raise2\dp0\hbox{$\cdot$}\kern.3\wd0\lower2\dp0\hbox{$\cdot$}}}}
\newcommand{\rs}{\setbox0\hbox{$-$}
\mathbin{\hbox{$-$\kern-\wd0\lower2\dp0\hbox{$\cdot$}\kern.3\wd0\raise2\dp0\hbox{$\cdot$}}}}


\newcommand{\tw}{{\bf n}}
\newcommand{\drd}{\rotatebox[origin=c]{27.5}{$\Swarrow$}}
\newcommand{\dld}{\rotatebox[origin=c]{-27.5}{$\Searrow$}}

\allowdisplaybreaks

\begin{document}

\title{Twist structures and Nelson conuclei}
\author{Manuela Busaniche \and Nikolaos Galatos \and Miguel Marcos}

\begin{abstract}
Motivated by Kalman residuated lattices, Nelson residuated lattices and Nelson paraconsistent residuated lattices, we provide a natural common generalization of them. Nelson conucleus algebras unify these examples and further extend them to the non-commutative setting. We study their structure, establish a representation theorem for them in terms of twist structures and conuclei that results in a categorical adjunction, and explore situations where the representation is actually an isomorphism. In the latter case, the adjunction is elevated to a categorical equivalence. By applying this representation to the original motivating special cases we bring to the surface their underlying similarities.
\end{abstract}

\maketitle

\section*{Introduction}
Residuated lattices arise in many contexts in general and ordered algebra. Examples of residuated lattices include lattice-ordered groups, the lattice of ideals of a ring and relation algebras. At the same time they serve as algebraic semantics for substructural logics, including linear, relevance and many-valued logics. As a result, the algebraic semantics of these logics form further examples of residuated lattices and include MV, Heyting and Boolean algebras. In this paper we investigate a construction of involutive residuated lattices that has interesting applications to models of paraconsistent logics and use it to provide a unified approach to these models.

    Given a lattice ${\bf L}$, the \textbf{twist structure over} ${\bf L}$ is obtained by considering the direct product of  ${\bf L}$ and its order-dual ${\bf L}^\partial.$ The resulting lattice has a natural De Morgan involution given by $$ \sim (x, y) = (y,x)$$ for all $(x,y) \in L \times L^\partial$. This construction was used by Kalman in 1958 \cite{Kal}, while the modifier "twist" appeared thirty years later in  Kracht's paper \cite{Kra}. Although Kalman only worked with the lattice structure, several other authors considered expansions with additional operations on ${\bf L}$ which induce new and interesting operations on the twist structure  \cite{Fid,Vak,Cig86,Sen90,Kra,TsiWil,BusCig01,Odi04,Odi-book,BusCig00,BusCig02}.

    In particular, Tsinakis and Wille \cite{TsiWil}, inspired by Chu's work in category theory \cite{Bar} and its specialization to quantales \cite{Rosenthal}, considered the twist structure over a residuated lattice ${\bf  L}$ having a greatest element $\top$ and endowed it with a residuated lattice structure with unit $(e,\top)$, such that the pair $(\top, e)$ is the dualizing element for the natural involution.

    In \cite{BusCig00}, \cite{BusCig01} and \cite{BusCig02} it is proved that the logical systems of Nelson constructive logic with strong negation (CNS, see \cite{Nel}), and its paraconsistent  analogue (PNS, see \cite{Odi-book}) have as algebraic semantics residuated lattices whose lattice reducts are twist structures (see also \cite{SpiVer08a,SpiVer08b}). Furthermore, the monoid and residuum operators coincide with the ones proposed in \cite{TsiWil}. However, the unit of the residuated lattice is not the same in all the cases, so these structures do not fall directly under the framework of \cite{TsiWil}.

    Our aim is to present a unified approach that provides a deeper insight into the classes of residuated lattices that have a representation based on  twist structures. Our framework encompasses Nelson residuated lattices \cite{SpiVer08a,BusCig01}, Nelson paraconsistent residuated lattices \cite{BusCig00,BusCig02} and Kalman residuated lattices \cite{BusCig03,AgMar}. Our results allow  the comparison among them and provide some interesting new examples.

    To achieve this aim we start by considering a broader class of algebras: residuated lattice-ordered semigroups. Given a residuated lattice ${\bf L}$ we define the general twist-product ${\bf Tw(L)}$  as an involutive residuated lattice-ordered semigroup with an extra unary operation of involution; the fact that ${\bf L}$ may lack a top element results in ${\bf Tw(L)}$ potentially lacking an identity element. By localizing to a specific positive idempotent element of ${\bf Tw(L)}$ (induced by an arbitrary fixed element $\imath$ of ${\bf L}$), we obtain a subalgebra ${\bf Tw(L, \imath)}$ of ${\bf Tw(L)}$ that is a residuated lattice; this localization is done by the double-division conucleus given in \cite{GJ}, which focuses on the local submonoid of the positive idempotent element. This approach allows us to work with twist products of residuated lattices that do not have a top element and subsumes all of our motivating examples. Thus we accomplish our first goal:  we put into the same framework different algebras, such as Nelson residuated lattices and Paraconsistent Nelson residuated lattices.

    Having established this first theoretical framework, we pursue our second purpose: to describe the class of involutive residuated lattices that have a representation as a twist-product over a residuated lattice; this requires us to focus on a construction in the reverse direction to the twist product. We show that the desired residuated lattice is also obtained by a conucleus on the  involutive residuated lattice, which is of a very different nature than the double-division one, and we call it a \textbf{Nelson conucleus}. The main representation result in Theorem \ref{Teorema_representacion} shows that pairs of the form $({\bf A}, \tw )$, where ${\bf A}$ is a cyclic involutive residuated lattice and $\tw$ is a Nelson conucleus, are representable by a twist-product over a residuated lattice defined on $\tw[{\bf A}].$ We call these algebras \textbf{Nelson conucleus algebras} and denote the variety they form by $\mathcal{NCA}$.  As a corollary we provide an adjunction between the algebraic category given by $\mathcal{NCA}$ and a category whose objects are pairs of the form $({\bf L}, \imath)$ where ${\bf L}$ is a residuated lattice and $\imath\in L$ is a cyclic element. Along the way of proving the representation, we generalize the original construction of Rasiowa \cite{Rasi,Rasibook} on Nelson algebras and their representation by twist structures. In particular, our presentation shows that Rasiowa's homomorphic image construction can be replaced by a conucleus construction, which is more internal to the original algebra, as it provides representatives for the equivalence classes.

     Our motivating examples share some extra common features: they are commutative residuated lattices and the Nelson conucleus $\tw$ is definable by a term function; therefore they actually form varieties of commutative involutive residuated lattices. First we identify the subvariety of $\mathcal{NCA}$ whose elements are term equivalent to Kalman lattices. Secondly we    prove that Nelson residuated lattices and Paraconsistent Nelson residuated lattices form classes term equivalent to subvarieties of
    $\mathcal{NCA}$,  thus Theorem \ref{Teorema_representacion} applies to them. Furthermore, we show that both of these subvarieties are contained in $\mathcal{NT}$, a subvariety of $\mathcal{NCA}$, up to term equivalence, 
    whose elements we call \textbf{Nelson-type algebras}. Following Sendlewski's representation for Nelson algebras in \cite{Sen90} and the paraconsistent analogue given by Odintsov in \cite{Odi04}, we improve the representation of Theorem \ref{Teorema_representacion} for $\mathcal{NT}$  by providing a Sendlewski-like theorem, i.e.,  we identify each Nelson-type algebra with a subalgebra of the twist-product on $\tw[{\bf A}].$ As a consequence we get a categorical equivalence between the algebraic category of Nelson-type algebras and the category whose objects are triples of the form $({\bf H},i, F)$, where ${\bf H}$ is a Brouwerian algebra, $\imath$ is a cyclic element in ${\bf H}$ and $F$  is a Boolean filter of ${\bf H}$. This then restricts to categorical equivalences for the two subvarieties corresponding to Nelson residuated lattices and Paraconsistent Nelson residuated lattices.

    In the last section, we complete the circle of ideas by providing some particular conditions under which the representation Theorem \ref{Teorema_representacion} can be turned into an isomorphism theorem even when the Nelson conucleus is not given by a term.

\section{Preliminaries}

In this section we review some existing definitions and constructions. We also  introduce modifications and combinations of these constructions that will be suitable for the paper.

\subsection{Residuated lattices}

A \textbf{residuated lattice-ordered semigroup} is an algebra ${\bf A}=(A, \jn, \mt, \cdot, \ld, \rd)$ such that $(A,\cdot)$ is a semigroup, $(A,\vee, \wedge)$ is a lattice and the residuation condition
\begin{align*}x\cdot y\le z \mbox{ iff } y\le x\ld z\mbox{ iff } x\le z\rd y\end{align*}
holds for all $x, y$ and $z$ in $A$, where $\le$ is the order given by the lattice structure.
A \textbf{residuated lattice} is an algebra ${\bf A}=(A, \jn, \mt, \cdot, \ld, \rd, e)$ such that $(A,\cdot, e)$ is a monoid and $(A, \jn, \mt, \cdot, \ld, \rd)$ is a residuated lattice-ordered semigroup; residuated lattice-ordered semigroups and residuated lattices form varieties. If a residuated lattice-ordered semigroup satisfies $x\cdot y=y\cdot x$, it is called a \textbf{commutative}. In such case $x\ld y= y\rd x$ and we denote the common value by $x \to y$. We say that the residuated lattice ${\bf A}$ is \textbf{distributive} if the lattice  $(A,\vee, \wedge)$  is distributive. 
For each natural number $n$ we define $x^0=e$ and $x^n=x^{n-1}\cdot x$ for $n>0$; also we often write $xy$ for $x \cdot y$. For more on residuated lattices, see \cite{GJKO,BT,HarRafTsi}.

We will be also working with \textbf{integral residuated lattices}, which are residuated lattices satisfying $x \leq e$, and with \textbf{Brouwerian algebras} that are term equivalent to  residuated lattices satisfying $xy=x \wedge y$ (and hence are integral and commutative).
We also consider expansions with an additional constant, which serves as the bottom element, and refer to these algebras as \textbf{bounded residuated lattices}, since then they necessarily also have a term-definable top element.

\subsection{Involutive residuated lattices}
Residuated lattices can be expanded into (cyclic) involutive residuated lattices in two term equivalent ways. One is by adding to the signature a \textbf{negation constant} element $f$ that is \textbf{cyclic}: for all $x$, $x \ld f = f \rd x$, and \textbf{dualizing}: $f \rd (x \ld f) = x = (f \rd x) \ld f$. Alternatively, we can add to the signature a unary \textbf{(cyclic) involution} operation $\ln$ satisfying the equations:
 \begin{align*}\ln\ln x &= x  & \mbox{(double negation)}\\ x\ld \ln y &= \ln x\rd y & \mbox{(contraposition)}\end{align*}
The two ways produce term equivalent algebras via the definitions $f
= \ln e$ and $\ln x = x \ld f$ (see \cite{GalRaf,TsiWil} for
details); in particular $\ln x=x \ld \ln e$. We prefer the
definition in terms of the involution $\ln$, as it makes sense also
for residuated lattice-ordered semigroups. In this paper we will not
make use of the notion of not-necessarily cyclic involutive
residuated lattice, and we will always assume cyclicity for $f$ and
$\ln$. Observe that in commutative involutive residuated lattices
the \textbf{contraposition} takes the form $x \ra \ln y = y \ra  \ln
x$ and that cyclicity holds automatically.

In an involutive residuated lattice-ordered semigroup the divisions are definable in terms of the multiplication and the involution, and also the lattice operations are interdefinable via De Morgan equations. More precisely:

\begin{lem}\label{lem:propiedades_involucion} (see \cite[Lemma 5.1]{GalRaf} and \cite[Lemma 2.8]{GJKO}) If ${\bf A}$ is an involutive residuated lattice then
\begin{enumerate}
\item $x\ld y=\ln (\ln y\cdot x)$ and $y \rd x=\ln ( x\cdot \ln y)$.
\item $x\cdot y=\ln (y\ld \ln x)= \ln(\ln y\rd x)$.
\item $\ln (x\vee y) = \ln x \wedge \ln y$ and $\ln(x\wedge y) = \ln x\vee\ln y$.
\end{enumerate}
\end{lem}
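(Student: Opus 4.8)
The plan is to reduce all three statements to two elementary facts about the involution. First I would record that $\ln$ is antitone: since $\ln x = x \ld \ln e$ and left division is order-reversing in its first argument (an immediate consequence of residuation, as $a\le b$ and $b\cdot w\le z$ give $a\cdot w\le z$), we get that $a\le b$ implies $\ln b\le\ln a$. Combining antitonicity with the double negation law $\ln\ln x = x$ yields the key biconditional: $z\le\ln w$ if and only if $w\le\ln z$, for all $z,w$. This equivalence, obtained by applying $\ln$ to both sides and using involutivity, is the workhorse of the whole argument.

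To prove the first identity of (1) I would show that $x\ld y$ and $\ln(\ln y\cdot x)$ have the same principal lower sets. For arbitrary $z$, residuation gives that $z\le x\ld y$ is equivalent to $x\cdot z\le y$. On the other side, the biconditional gives that $z\le\ln(\ln y\cdot x)$ is equivalent to $\ln y\cdot x\le\ln z$, and a second use of residuation rewrites this as $x\le\ln y\ld\ln z$. Here contraposition enters: applying $a\ld\ln b=\ln a\rd b$ with $a=\ln y$ and $b=z$, together with double negation, gives $\ln y\ld\ln z=\ln\ln y\rd z=y\rd z$, so the condition becomes $x\le y\rd z$, i.e. $x\cdot z\le y$. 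Thus both inequalities are equivalent to $x\cdot z\le y$ for every $z$, forcing $x\ld y=\ln(\ln y\cdot x)$. The second identity of (1) is entirely symmetric, interchanging the roles of the two divisions.

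Part (2) is then a formal consequence of (1). Substituting $\ln x$ for the second variable in the first identity gives $y\ld\ln x=\ln(\ln\ln x\cdot y)=\ln(x\cdot y)$ after using double negation; applying $\ln$ to both sides and using involutivity yields $x\cdot y=\ln(y\ld\ln x)$, and the companion equality $x\cdot y=\ln(\ln y\rd x)$ follows the same way from the second identity of (1). For the De Morgan laws in (3), antitonicity already gives $\ln(x\vee y)\le\ln x\wedge\ln y$. For the reverse inequality I would set $z=\ln x\wedge\ln y$ and apply the biconditional to each of $z\le\ln x$ and $z\le\ln y$, obtaining $x\le\ln z$ and $y\le\ln z$, hence $x\vee y\le\ln z$, and therefore $z\le\ln(x\vee y)$ by one last use of the biconditional. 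The dual law $\ln(x\wedge y)=\ln x\vee\ln y$ is obtained by replacing $x,y$ with $\ln x,\ln y$ and invoking double negation.

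The only delicate point is the first identity of (1): one must apply residuation and contraposition in the correct order and keep careful track of left versus right division, since in the non-commutative setting the two divisions are not interchangeable and contraposition must be instantiated with exactly the right substitution. Once that identity is established, the remaining statements follow mechanically from the biconditional and double negation.
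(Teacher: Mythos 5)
Your argument is correct. Note that the paper does not actually prove this lemma; it is quoted with citations to \cite[Lemma 5.1]{GalRaf} and \cite[Lemma 2.8]{GJKO}, so there is no in-paper proof to compare against. Your derivation is a clean, self-contained version of the standard argument: the reduction of everything to the Galois-type biconditional ($z\le\ln w$ iff $w\le\ln z$, obtained from antitonicity of $\ln$ plus double negation) is exactly the right workhorse, and your handling of the one delicate step --- instantiating contraposition as $\ln y\ld\ln z=\ln\ln y\rd z=y\rd z$ so that both sides of the first identity in (1) characterize the same condition $x\cdot z\le y$ --- is accurate in the non-commutative setting. The only implicit dependency worth flagging is your use of $\ln x=x\ld\ln e$ to get antitonicity; this does follow from the stated axioms (contraposition with $y=e$ gives $x\ld\ln e=\ln x\rd e=\ln x$), but since the paper introduces $\ln$ axiomatically via double negation and contraposition only, that one line deserves to be spelled out rather than borrowed from the term-equivalence discussion.
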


An involutive residuated lattice is called \textbf{odd} if $f=e$, or equivalently $\ln e=e$; also, we have $\ln x  = x \ld e$. Because of the identification of the two constants, odd involutive residuated lattices are term equivalent to residuated lattices that satisfy the equations $x \ld e=e \rd x$ and $(x \ld e) \ld e=x$. In the commutative case, the defining equation is just   $(x\rightarrow e)\rightarrow e=x$.

\subsection{Conuclei constructions}

We review the important notion of conucleus on residuated structures.

\subsubsection{Conuclei and weak conuclei}

\begin{definition} A \textbf{weak conucleus} $\delta$ on a residuated lattice-ordered semigroup $\m A=(A, \jn, \mt, \cdot, \ld, \rd)$ is a function on $\m A$ that satisfies:
\begin{enumerate}[{(C}1{)}]
\item\label{Cord1} $\delta(x) \leq x$,
 \item\label{Cidem} $\delta(\delta(x))=\delta(x)$,
 \item\label{Cmono} if $x \leq y$ then $\delta(x) \leq \delta(y)$,
 \item\label{Cprod} $\delta(x)\cdot \delta(y) \leq \delta(x\cdot y)$.
 \end{enumerate}
 If ${\bf A}$ is a residuated lattice with neutral element $e$ and $\delta$ additionally satisfies:
 \begin{enumerate}[{(C}1{)}]
    \setcounter{enumi}{4}
 \item\label{Cneutral} $\delta(e)\cdot \delta(x)=\delta(x)\cdot \delta(e)=\delta(x),$
 \end{enumerate} then $\delta$ is called a \textbf{conucleus} on ${\bf A}.$
 \end{definition}

 Summing up, a conucleus is an interior operator $\delta$ on a residuated lattice ${\bf A}$  that satisfies (C\ref{Cprod}) and (C\ref{Cneutral}).
It is immediate that if $\delta$ is a weak conucleus then $\delta(x)\vee\delta(y)=\delta(\delta(x)\vee\delta(y))$ and $\delta(x)\cdot\delta(y)=\delta(\delta(x)\cdot\delta(y))$.

 Given a residuated lattice ${\bf A}$ and a conucleus $\delta$ on ${\bf A}$, the algebra  $$\m A_\delta=\delta[\m A]=(\delta[A], \jn, \mt_\delta, \cdot, \ld_\delta, \rd_\delta, \delta(e))$$ is a residuated lattice (\cite{GJKO}), where $x \mt_\delta y= \delta(x \mt y)$,  $x \ld_\delta y = \delta(x \ld y)$ and $y \rd_\delta x = \delta(y \rd x)$ for $x,y\in \delta[A]$ (by the previous observation, $\delta[A]$ is closed under $\vee$ and $\cdot$).

\begin{lem}\label{lem:propoconucleous} If $\delta$ is a weak conucleus on $\m A$, then
\begin{enumerate}[{(C}1{)}]
    \setcounter{enumi}{5}
    \item\label{Cinf} $\delta(x\wedge y) = \delta(\delta(x)\wedge \delta(y))$,
    \item\label{Cimpl} $\delta(\delta(x)\ld y) = \delta(\delta(x)\ld \delta(y))$ and
    $\delta(y \rd \delta(x)) = \delta(\delta(y)\rd \delta(x))$.
\end{enumerate}
\end{lem}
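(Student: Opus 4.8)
The plan is to establish each equality by proving two inequalities, invoking monotonicity (C\ref{Cmono}) to collapse one direction and idempotence (C\ref{Cidem}) to absorb an outer application of $\delta$.

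For (C\ref{Cinf}), both directions are purely order-theoretic. From $\delta(x) \le x$ and $\delta(y) \le y$ (property (C\ref{Cord1})) we get $\delta(x) \wedge \delta(y) \le x \wedge y$, so (C\ref{Cmono}) yields $\delta(\delta(x) \wedge \delta(y)) \le \delta(x \wedge y)$. Conversely, since $x \wedge y \le x$ and $x \wedge y \le y$, monotonicity gives $\delta(x \wedge y) \le \delta(x)$ and $\delta(x \wedge y) \le \delta(y)$, hence $\delta(x \wedge y) \le \delta(x) \wedge \delta(y)$; applying $\delta$ once more and using (C\ref{Cidem}) to rewrite $\delta(\delta(x \wedge y))$ as $\delta(x \wedge y)$ produces the reverse inequality.

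For (C\ref{Cimpl}), write $a = \delta(x)$, so that $\delta(a) = a$ by (C\ref{Cidem}). The easy inequality uses $\delta(y) \le y$ and monotonicity of $\ld$ in its second argument to get $a \ld \delta(y) \le a \ld y$, whence $\delta(a \ld \delta(y)) \le \delta(a \ld y)$. The reverse inequality is the crux of the lemma and is where (C\ref{Cprod}) enters. Applying (C\ref{Cprod}) to $a$ and $\delta(a \ld y)$ gives $a \cdot \delta(a \ld y) = \delta(a) \cdot \delta(a \ld y) \le \delta(a \cdot (a \ld y))$; since $a \cdot (a \ld y) \le y$ by residuation, monotonicity bounds the right-hand side by $\delta(y)$, so $a \cdot \delta(a \ld y) \le \delta(y)$. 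Residuating back yields $\delta(a \ld y) \le a \ld \delta(y)$, and a final application of $\delta$ together with (C\ref{Cidem}) gives $\delta(a \ld y) \le \delta(a \ld \delta(y))$, as desired. The companion identity for $\rd$ follows by the mirror-image argument, applying (C\ref{Cprod}) to $\delta(y \rd a)$ and $a$ and residuating on the other side.

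The only genuinely non-formal step is the reverse inequality in (C\ref{Cimpl}): one must recognize that (C\ref{Cprod}), combined with the cancellation $a \cdot (a \ld y) \le y$, is exactly what transports the inner $y$ through the conucleus down to $\delta(y)$. Everything else is monotonicity and idempotence bookkeeping.
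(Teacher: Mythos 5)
Your proof is correct and follows essentially the same route as the paper's: both directions of (C\ref{Cinf}) via (C\ref{Cord1}), (C\ref{Cmono}) and (C\ref{Cidem}), and the nontrivial direction of (C\ref{Cimpl}) via (C\ref{Cprod}) applied to $\delta(x)\cdot\delta(\delta(x)\ld y)$ followed by $\delta(x)\cdot(\delta(x)\ld y)\le y$ and residuation. The only difference is that you spell out the residuation step that the paper leaves implicit.
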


\begin{proof}
For (C\ref{Cinf}), observe that by (C\ref{Cord1}) we have that $\delta(x)\wedge\delta(y)\leq x\wedge y$, so  $\delta(\delta(x)\wedge\delta(y))\leq \delta(x\wedge y)$ by (C\ref{Cmono}). As $\delta(x\wedge y)\leq \delta(x),\delta(y)$ by (C\ref{Cmono}), we have that $\delta(x\wedge y)\leq \delta(\delta(x)\wedge\delta(y))$ again by (C\ref{Cmono}) and (C\ref{Cidem}).
For (C\ref{Cimpl}), as $\delta(x)\ld \delta(y)\leq \delta(x)\ld y$ we have one inequality. For the other one, using  (C\ref{Cprod}) we have  $\delta(x)\cdot \delta(\delta(x)\ld y)= \delta(\delta(x))\cdot \delta(\delta(x)\ld y)\leq \delta(\delta(x)\cdot \delta(x)\ld y)\leq  \delta(y)$ and (C\ref{Cimpl}) follows from  (C\ref{Cmono}) and  (C\ref{Cidem}).
\end{proof}

\subsubsection{Double Division conucleus}

Let $\m A=(A, \jn, \mt, \cdot, \ld, \rd)$ be a residuated lattice-ordered semigroup and $p\in A$ an idempotent element (i.e., $p = p^2$) that is also \textbf{positive} (i.e., $p \ld x, x \rd p \leq x \leq p x, x p$, for all $x$). This definition of being positive is justified by the fact that if $\m A$ has an identity element $e$, then $p$ is positive iff $e \leq p$. It is shown in \cite{GJ} that the map defined by $$\delta_p(x)=p \ld x \rd p$$ is a weak conucleus on $\m A$, that $\mt_p=\mt$, $\ld_p=\ld$, $\rd_p=\rd$, that $p$ is an identity element for $\delta_p[A]$ and that $\delta_p[A]=p \ld A \rd p =\{ p \ld a \rd p : a \in A\}$. Therefore the algebra $$p \ld \m A \rd p=\delta_p [{\bf A}]=(p \ld A \rd p, \mt, \jn, \cdot, \ld, \rd, p)$$ is a residuated lattice.  The algebra $p \ld \m A \rd p$ is called the \textbf{double-division conucleus image of $\m A$ by $p$}. Moreover, it turns out that $\delta_p[A]=\{a \in A : a p=a, p a=a\}$.

It is further shown in \cite{GJ} that if the residuated lattice-ordered semigroup $\m A$ is (cyclic) involutive with involution $\ln$, then the residuated lattice $p \ld \m A \rd p$ is also (cyclic) involutive and actually  $p \ld \m A \rd p$ is a subalgebra of $\m A$ with respect to the operations $\mt, \jn, \cdot, \ld, \rd, \ln$; the constants $e$ and $\ln e$ are replaced by $p$ and $\ln p$.

\section{Twist structures}

  Given a residuated lattice-ordered semigroup $\m L=(L, \mt, \jn, \cdot, \ld, \rd)$, we consider the set  $Tw(L)=L \times L$ and define the operations $\mt$ and $\jn$ as in $\m L \times \m L^\partial$. Also, for $a,a', b, b' \in L$, we define:
 \begin{equation}\label{eq:sim}
 \ln(a,b) = (b,a)
 \end{equation}
 \begin{equation}\label{eq:ast}
(a,b)\cdot (a',b') = (a \cdot a',b' \rd a \mt a'\ld b)
\end{equation}
\begin{equation}\label{eq:ld}
(a,b)\ld (a',b')    = (a\ld a'\mt b\rd b',b' \cdot a)
\end{equation}
\begin{equation}\label{eq:rd}
(a',b')\rd (a,b)    = (a'\rd a\mt b'\ld b,a \cdot b')
\end{equation}

The resulting structure
$(\text{Tw}(L), \mt, \jn, \cdot, \ld, \rd, \ln)$ is an involutive residuated lattice-ordered semigroup, which we denote by $\m{Tw}(\m L)$. We refer to this algebra as the \textbf{full twist structure} over $\m L$.
In addition, if $\m L$ is a topped residuated lattice with neutral element $e$ and top element $\top$ the structure $(\text{Tw}(L), \mt, \jn, \cdot, \ld, \rd, \ln, (e,\top))$
 is an involutive residuated lattice (see \cite{TsiWil}).
In particular, if $(L, \mt, \jn, \cdot, \ld, \rd, e)$ is an integral residuated lattice, the algebra $\m{Tw}(\m L,e)=(\text{Tw}(L), \mt, \jn, \cdot, \ld, \rd, \ln, (e,e))$ is an involutive residuated lattice.

\subsection{Double Division conucleus for twist structures}

Next we do not assume that $\m L$ has a top, so $\m{Tw}(\m L)$ may lack an identity element. 

\begin{lem} Let $\m L$ be a residuated lattice and $\imath \in L$.
\begin{enumerate}
    \item The element $(e,\imath)$ is a positive idempotent of the residuated lattice-ordered semigroup $\m{Tw}(\m L)$.
    \item An element $(a,b)$ is fixed by $\delta_{(e,\imath)}$ iff $ab \jn ba \leq \imath$. (Recall that $\delta_p(x)=p \ld x \rd p$.)
\end{enumerate}
\end{lem}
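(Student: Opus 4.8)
The plan is to prove both parts by direct computation with the twist operations \eqref{eq:sim}--\eqref{eq:rd}, keeping in mind throughout that the order on $\m{Tw}(\m L)$ is the order of $\m L\times\m L^\partial$, so that $(a,b)\le(a',b')$ means $a\le a'$ and $b'\le b$ in $\m L$.

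For part (1) I would first check that $(e,\imath)$ is idempotent: by \eqref{eq:ast}, $(e,\imath)\cdot(e,\imath)=(e\cdot e,\ \imath\rd e\mt e\ld\imath)=(e,\imath\mt\imath)=(e,\imath)$, using that $e$ is the unit of $\m L$. For positivity I must verify $p\ld x,\ x\rd p\le x\le px,\ xp$ for every $x=(a,b)$, where $p=(e,\imath)$. Using \eqref{eq:ast}--\eqref{eq:rd} one computes the four products $px=(a,\ b\mt(a\ld\imath))$, $xp=(a,\ (\imath\rd a)\mt b)$, $p\ld x=(a\mt(\imath\rd b),\ b)$ and $x\rd p=(a\mt(b\ld\imath),\ b)$. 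In every case one coordinate is unchanged while the other is obtained by meeting with a single extra factor, so each of the four required inequalities reduces to the triviality $u\mt v\le u$—using, in the cases where the affected coordinate is the second one, that the order there is reversed. This establishes that $(e,\imath)$ is a positive idempotent.

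For part (2) I would invoke the characterization recorded earlier that $\delta_p[A]=\{x: xp=x,\ px=x\}$, so that $(a,b)$ is fixed by $\delta_{(e,\imath)}$ exactly when $(e,\imath)\cdot(a,b)=(a,b)$ and $(a,b)\cdot(e,\imath)=(a,b)$. Reading off the products already computed in part (1), the first equation amounts to $b\mt(a\ld\imath)=b$, i.e.\ $b\le a\ld\imath$, which by residuation in $\m L$ is equivalent to $ab\le\imath$; the second amounts to $(\imath\rd a)\mt b=b$, i.e.\ $b\le\imath\rd a$, equivalent to $ba\le\imath$. Conjoining the two conditions yields precisely $ab\jn ba\le\imath$.

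All the individual steps are routine once the defining formulas are in hand; the only delicate point—which I regard as a bookkeeping hazard rather than a genuine obstacle—is to apply the reversed order in the second coordinate consistently and to invoke the correct one of the two residuation laws $ab\le\imath\Leftrightarrow b\le a\ld\imath$ and $ba\le\imath\Leftrightarrow b\le\imath\rd a$, since it is exactly this pair of equivalences that converts the two fixed-point equations into the single join inequality $ab\jn ba\le\imath$.
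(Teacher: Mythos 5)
Your proof is correct. Part (1) is essentially identical to the paper's argument: the same idempotency computation and the same four products $px$, $xp$, $p\ld x$, $x\rd p$, each reducing to a trivial meet inequality once the reversed order in the second coordinate is taken into account. For part (2) you take a slightly different route: the paper computes the double division $(e,\imath)\ld(a,b)\rd(e,\imath)=(a\mt \imath\rd b\mt b\ld\imath,\, b)$ explicitly and reads off that fixedness is equivalent to $a\le \imath\rd b\mt b\ld\imath$, whereas you invoke the recorded characterization $\delta_p[A]=\{x: xp=x,\ px=x\}$ from the preliminaries and reuse the two products already computed in part (1), obtaining $b\le a\ld\imath$ and $b\le\imath\rd a$. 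Both are legitimate and land on $ab\jn ba\le\imath$ via the same two residuation equivalences; your version is marginally more economical, while the paper's direct computation has the side benefit of exhibiting the explicit formula for $\delta_{(e,\imath)}(a,b)$, which foreshadows the description of the maximal set $M_\imath$ in the lemma that follows.
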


\begin{proof}
For (1) we have
$(e,\imath) (e,\imath)=(e\cdot e, \imath \rd e \mt e \ld \imath)=(e,\imath)$
so $(e,\imath)$ is a idempotent. To show that it is positive, we have that for all $(a,b)$,
\begin{align*}
    (a,b) (e,\imath) &= (ae, \imath \rd a \mt e \ld b) = (a, \imath \rd a \mt b) \geq (a,b)\\
    (e,\imath)(a,b)  &= (ea, b \rd e \mt a \ld \imath) = (a, b \mt a \ld \imath) \geq (a,b)\\
    (e,\imath) \ld (a, b) &= (e \ld a \mt \imath \rd b, be)= (a \mt \imath \rd b, b) \leq (a,b)\\
    (a, b) \rd (e,\imath) &= (a \rd e \mt b \ld \imath, be)= (a \mt b \ld \imath, b) \leq (a,b).
\end{align*}
Note that
\begin{align*}(e,\imath) \ld (a, b) \rd (e,\imath) &=
(e \ld a \mt \imath \rd b, be) \rd (e,\imath) =
(a \mt \imath \rd b, b) \rd (e,\imath) \\
&=((a \mt \imath \rd b)\rd e \mt b \ld \imath, eb) =
(a \mt \imath \rd b \mt b \ld \imath, b).\end{align*}
Therefore,  $(a,b)$ is fixed iff $(e,\imath) \ld (a, b) \rd (e,\imath) = (a,b)$ iff
$(a \mt \imath \rd b \mt b \ld \imath, b) = (a,b)$ iff
$a \leq \imath \rd b \mt b \ld \imath$ iff
$ab \leq \imath$ and $ba \leq \imath$ iff
$ab \jn ba \leq \imath$.
\end{proof}

 Given the fact that $(e,\imath)$ is a positive idempotent, for a residuated lattice $\m L$, we denote by
\begin{align*}\m{Tw}(\m L, \imath)=\delta_{(e,\imath)}[\m{Tw}(\m L)]=(e, \imath)\ld \m{Tw}(\m L) \rd (e, \imath)\end{align*}
 the double-division conucleus image of the residuated lattice-ordered semigroup $\m{Tw}(\m L)$ by $\delta(e,\imath)$. 
 Note that even though $\m{Tw}(\m L)$ may have no identity element, $\m{Tw}(\m L, \imath)$ is still a residuated lattice with identity element $(e,\imath)$. This allows us to consider this construction when $\m L$ is a residuated lattice that lacks a top element. We call $\m{Tw}(\m L, \imath)$ the \textbf{twist structure} over $(\m L, \imath)$

According to the previous results, the universe of the algebra $\m{Tw}(\m L, \imath)$ is the set $\text{Tw}(L,\imath) = \{(a,b)\in L\times L^\partial: ab \jn ba \leq \imath\}$. In summary we obtain the following:

\begin{thm}\label{twists} Let $(L, \mt, \jn, \cdot, \ld, \rd, e)$ be a residuated lattice and $\imath\in L$. Consider the set
$$\text{Tw}(L,\imath) = \{(a,b)\in L\times L^\partial: a b\vee b a\leq \imath\}$$
equipped with the operations $\wedge$ and $\vee$ of $\bf L\times L^\partial$ and the operations defined in equations (\ref{eq:sim}), (\ref{eq:ast}), (\ref{eq:ld}) and (\ref{eq:rd}).  Then ${\bf Tw(L,\imath)}=\left(\text{Tw}(L,\imath),\wedge,\vee,\cdot,\ld, \rd,\ln,(e,\imath)\right)$ is an involutive residuated lattice.
\end{thm}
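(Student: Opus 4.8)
The plan is to derive Theorem~\ref{twists} as an immediate consequence of the material already established, rather than verifying residuation from scratch on the subset $\mathrm{Tw}(L,\imath)$. The key observation is that the preceding Lemma has done the heavy lifting: it shows that $(e,\imath)$ is a positive idempotent of the involutive residuated lattice-ordered semigroup $\m{Tw}(\m L)$, and it identifies precisely which elements $(a,b)$ are fixed by the double-division weak conucleus $\delta_{(e,\imath)}$, namely those satisfying $ab\jn ba\le\imath$. Thus $\mathrm{Tw}(L,\imath)$ is exactly the fixed-point set $\delta_{(e,\imath)}[\mathrm{Tw}(L)]$.

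First I would invoke the result of \cite{GJ}, recalled in the Preliminaries, that for a positive idempotent $p$ of a residuated lattice-ordered semigroup $\m A$ the map $\delta_p(x)=p\ld x\rd p$ is a weak conucleus, that $\delta_p[\m A]=p\ld\m A\rd p$ is a residuated lattice with identity $p$, and that the restricted operations $\mt_p,\ld_p,\rd_p$ coincide with $\mt,\ld,\rd$. Applying this with $\m A=\m{Tw}(\m L)$ and $p=(e,\imath)$, which is legitimate by part~(1) of the Lemma, yields at once that $\m{Tw}(\m L,\imath)=\delta_{(e,\imath)}[\m{Tw}(\m L)]$ is a residuated lattice with identity $(e,\imath)$, whose universe by part~(2) of the Lemma is the stated set $\{(a,b):ab\jn ba\le\imath\}$, and whose meet, join, product and both divisions are the restrictions of the full twist operations given in equations~(\ref{eq:sim})--(\ref{eq:rd}).

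It remains to account for the involution and to confirm that $\ln$ is inherited. Here I would appeal to the further result of \cite{GJ}, also recalled in the excerpt, that when the residuated lattice-ordered semigroup $\m A$ is cyclic involutive with involution $\ln$, the double-division image $p\ld\m A\rd p$ is again cyclic involutive and is in fact a subalgebra of $\m A$ with respect to $\mt,\jn,\cdot,\ld,\rd,\ln$, with the constants $e,\ln e$ replaced by $p,\ln p$. Since $\m{Tw}(\m L)$ is a cyclic involutive residuated lattice-ordered semigroup with involution $\ln(a,b)=(b,a)$, this gives that $\mathrm{Tw}(L,\imath)$ is closed under $\ln$ and that the structure $\bigl(\mathrm{Tw}(L,\imath),\mt,\jn,\cdot,\ld,\rd,\ln,(e,\imath)\bigr)$ is a cyclic involutive residuated lattice, which is exactly the assertion of the theorem.

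Because almost every component has been prepared in advance, I anticipate no serious obstacle; the proof is essentially an assembly step. The one point requiring a sentence of care is making explicit that the operations named in the theorem statement—the lattice operations of $\m L\times\m L^\partial$ together with those of (\ref{eq:sim})--(\ref{eq:rd})—are genuinely the operations the conucleus construction produces on the fixed-point set, i.e.\ that on $\delta_{(e,\imath)}$-fixed elements the conucleus-modified operations $\mt_{(e,\imath)},\ld_{(e,\imath)},\rd_{(e,\imath)}$ collapse to the ambient twist operations; this is guaranteed by the \cite{GJ} identities $\mt_p=\mt$, $\ld_p=\ld$, $\rd_p=\rd$, so no independent computation is needed.
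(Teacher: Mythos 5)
Your proposal is correct and follows essentially the same route as the paper: the theorem is presented there as a summary of the preceding discussion, namely the lemma identifying $(e,\imath)$ as a positive idempotent with fixed-point set $\{(a,b): ab\jn ba\le\imath\}$, combined with the results of \cite{GJ} on double-division conuclei (including the involutive case and the identities $\mt_p=\mt$, $\ld_p=\ld$, $\rd_p=\rd$). No further comment is needed.
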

We prove a lemma that sheds some light into the construction.

\begin{lem}
    The  set $$\text{Tw}(L,\imath) =
    \{(a,b)\in L\times L: ab \jn ba\leq \imath\}$$
    is a downset of the direct product lattice $\m L \times \m L$. Moreover, $M_\imath=\{(a,b): a = \imath \rd b \mt b \ld \imath  \text{ and } b =  \imath \rd a \mt a \ld \imath \}$ consists of maximal elements of $\text{Tw}(L,\imath)$ and if $\imath$ is cyclic then $\text{Tw}(L,\imath) = \da_{\m L \times \m L} M_\imath$. \end{lem}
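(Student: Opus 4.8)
The plan is to verify three separate assertions, so I would organize the proof into three parts corresponding to the downset claim, the characterization of $M_\imath$ as maximal elements, and the equality $\text{Tw}(L,\imath) = \da_{\m L \times \m L} M_\imath$ under cyclicity.

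First I would show that $\text{Tw}(L,\imath)$ is a downset. Suppose $(a',b') \le (a,b)$ in $\m L \times \m L$, meaning $a' \le a$ and $b' \le b$, and suppose $(a,b) \in \text{Tw}(L,\imath)$, i.e. $ab \jn ba \le \imath$. Since multiplication is monotone in a residuated lattice-ordered semigroup, $a'b' \le ab \le \imath$ and $b'a' \le ba \le \imath$, hence $a'b' \jn b'a' \le \imath$, so $(a',b') \in \text{Tw}(L,\imath)$. This step is routine.

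Next I would identify $M_\imath$ and show its members are maximal in $\text{Tw}(L,\imath)$. The key observation, already available from the preceding lemma's computation, is that $\delta_{(e,\imath)}(a,b) = (a \mt \imath \rd b \mt b \ld \imath, b)$. For $(a,b)$ to lie in $\text{Tw}(L,\imath)$ we need $a \le \imath \rd b \mt b \ld \imath$ (equivalently $ab, ba \le \imath$), so the largest admissible first coordinate for a fixed second coordinate $b$ is $\imath \rd b \mt b \ld \imath$; by the involution $\ln(a,b)=(b,a)$, the symmetric condition pins down $b = \imath \rd a \mt a \ld \imath$ as the largest admissible second coordinate for fixed $a$. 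I would argue that if $(a,b) \in M_\imath$ and $(a,b) \le (a',b')$ with $(a',b') \in \text{Tw}(L,\imath)$, then from $b \le b'$ and membership we get $a' \le \imath \rd b' \mt b' \ld \imath \le \imath \rd b \mt b \ld \imath = a$, forcing $a' = a$, and symmetrically $b' = b$; hence $(a,b)$ is maximal. Here I must first check $M_\imath \subseteq \text{Tw}(L,\imath)$, which follows because $a = \imath \rd b \mt b \ld \imath$ gives $ab \le \imath$ and $ba \le \imath$ directly by residuation.

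Finally, for the equality $\text{Tw}(L,\imath) = \da_{\m L \times \m L} M_\imath$ when $\imath$ is cyclic, the inclusion $\supseteq$ follows from the first two parts since $M_\imath \subseteq \text{Tw}(L,\imath)$ and the set is a downset. For $\supseteq$ the reverse, given any $(a,b) \in \text{Tw}(L,\imath)$ I would exhibit an element of $M_\imath$ above it; the natural candidate is $(a, \imath \rd a \mt a \ld \imath)$ or rather an appropriate closure. The main obstacle I anticipate is checking that this candidate actually lands in $M_\imath$, which requires the identity $\imath \rd (\imath \rd a \mt a \ld \imath) \mt (\imath \rd a \mt a \ld \imath) \ld \imath = a$ whenever $ab \jn ba \le \imath$ with suitable $b$; this is where cyclicity is essential, since cyclicity of $\imath$ forces $\imath \rd x = x \ld \imath$ for the relevant elements, collapsing the two division expressions and letting the double-division structure behave like a genuine (anti)closure so that the maximal element dominating $(a,b)$ exists and is fixed by the defining equations of $M_\imath$. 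I would isolate the needed equational consequence of cyclicity as the crux and verify it carefully, then conclude that every $(a,b)$ lies below such a constructed maximal element.
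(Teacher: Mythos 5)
Your first two parts (the downset property and the maximality of elements of $M_\imath$) are correct and follow essentially the same route as the paper: monotonicity of multiplication for the downset claim, and the antitone behaviour of $\imath \rd (-)$ and $(-)\ld \imath$ to squeeze $a' = a$ and $b' = b$ in the maximality argument.

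The third part, however, has a genuine gap. Your candidate $(a, \imath \rd a \mt a \ld \imath)$ does not in general lie in $M_\imath$, and the identity you say is needed, namely $\imath \rd (\imath \rd a \mt a \ld \imath) \mt (\imath \rd a \mt a \ld \imath)\ld \imath = a$, is simply false: under cyclicity it reduces to $(a \ld \imath)\ld \imath = a$, i.e.\ the assertion that every element is fixed by the double division, which fails already when $\m L$ is a Heyting algebra and $\imath = 0$ (it would say $\neg\neg a = a$). Cyclicity only collapses $\imath \rd x$ and $x \ld \imath$; it does not make the double division into the identity. The correct witness, which is what the paper uses, raises the \emph{first} coordinate as well: given $(c,d) \in \text{Tw}(L,\imath)$, take $\bigl((c\ld \imath)\ld \imath,\, c\ld \imath\bigr)$. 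This dominates $(c,d)$ because $c \leq (c\ld\imath)\ld\imath$ and $d \leq c\ld\imath$ (the latter from $cd \leq \imath$), and it lies in $M_\imath$ because of the always-valid triple-division identity $\bigl((x\ld\imath)\ld\imath\bigr)\ld\imath = x\ld\imath$ coming from the Galois connection — no fixed-point assumption on $a$ itself is needed. You correctly sensed that ``an appropriate closure'' was required, but you did not identify it, and the equational crux you isolated is not a consequence of cyclicity; as written, the argument for $\text{Tw}(L,\imath) \subseteq \da_{\m L \times \m L} M_\imath$ does not go through.
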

\begin{proof} We prove that $ \text{Tw}(L,\imath)$ is a downset of ${\bf L}\times {\bf L}$ in terms of the coordinatewise order, which is different than the order of the residuated lattice ${\bf {Tw}(L,\imath)}.$
    If $(c,d), (a,b) \in L\times L $, are such that  $c \leq a$, $d \leq b$ and $ab \jn ba\leq \imath$, then $cd \jn dc \leq ab \jn ba\leq \imath$.
    Also, note that
\begin{align*}
        \text{Tw}(L,\imath) &=\{(a,b): a\le \imath \rd b \mt b \ld \imath  \text{ and } b \le  \imath \rd a \mt a \ld \imath\}\\
        &=\{(a,b): a\le \imath \rd b \mt b \ld \imath\}
        =\{(a,b): b \le  \imath \rd a \mt a \ld \imath\}.
\end{align*}
    It then follows directly that $\text{Tw}(L,\imath)$ is a downset. To show that the set $M_\imath=\{(a,b): a = \imath \rd b \mt b \ld \imath  \text{ and } b =  \imath \rd a \mt a \ld \imath \}$ consists of maximal elements of $\text{Tw}(L,\imath)$, let $(a, b)\in M_\imath$ and $(c,d) \in \text{Tw}(L,\imath)$, be such that  $a \leq c$, $b \leq d$. Then  $a = \imath \rd b \mt b \ld \imath$, $b =  \imath \rd a \mt a \ld \imath$, $c\le \imath \rd d \mt d \ld \imath$   and $d \le  \imath \rd c \mt c \ld \imath$. So, $c\le \imath \rd d \mt d \ld \imath \leq \imath \rd b \mt b \ld \imath = a \leq c$ and
    $d \le  \imath \rd c \mt c \ld \imath \leq \imath \rd a \mt a \ld \imath = b \leq d$, hence $(c,d)=(a,b)$.

    Assume now that $\imath$ is cyclic to prove that  $\text{Tw}(L,\imath) = \da_{\m L \times \m L} M_\imath$. Since $M_\imath \subseteq \text{Tw}(L,\imath)$ and $\text{Tw}(L,\imath)$ is a downset, we get $\da_{\m L \times \m L} M_\imath \subseteq \text{Tw}(L,\imath)$. Conversely, if $(c,d) \in \text{Tw}(L,\imath)$, then $c \leq (c\ld \imath) \ld \imath$, $d \leq  c\ld \imath $ and $((c\ld \imath) \ld \imath,c\ld \imath)\in M_\imath$; the element $(d \ld \imath, (d\ld \imath) \ld \imath)$ also witnesses this fact. This provides a visual understanding of how $\text{Tw}(L,\imath)$ sits inside $\text{Tw}(L)$.
\end{proof}

In Figure \ref{2_element_twists} we consider all possible twist structures and their maximal sets $M_\imath$ associated with the only two-element residuated lattice: the generalized Boolean algebra $\bf 2$. In Figure \ref{3_element_twists} we consider all possible twist structures and their maximal sets $M_\imath$ associated with all the three-element residuated lattices: the Wajsberg chain $\bf \text{\L}_3$, the G\"odel hoop $\bf G_3$ and the Sugihara monoid $\bf S_3$.

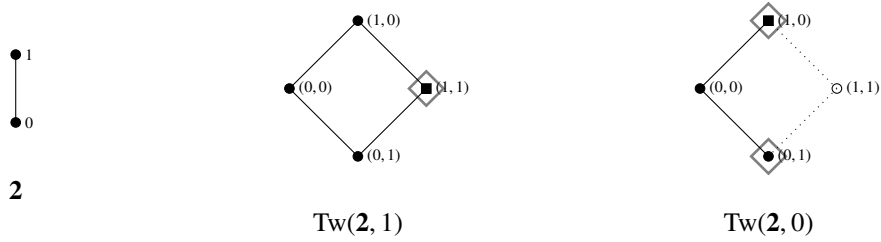
\begin{figure}[htp]
    \centering
    \begin{tikzpicture}[scale=0.9]
    \draw[black, fill=black] (0,0) circle (0.07 cm);
    \draw[black] (2,0) circle (0.07 cm);
    \draw[black, fill=black] (1,1) +(-2pt,-2pt) rectangle +(2pt,2pt) ;
    \node[diamond,gray,line width=1pt,draw]  at (1,1) {};
    \draw[black, fill=black] (1,-1) circle (0.07 cm);
    \node[diamond,gray,line width=1pt,draw]  at (1,-1) {};
    \draw[black] (0,0)--(1,1);
    \draw[black] (0,0)--(1,-1);
    \draw[dotted] (2,0)--(1,-1);
    \draw[dotted] (2,0)--(1,1);
    \node[right] at (0,0) {\tiny$(0,0)$};
    \node[right] at (2,0) {\tiny$(1,1)$};
    \node[right] at (1,1) {{\tiny$(1,0)$}};
    \node[right] at (1,-1) {\tiny$(0,1)$};
    \node at (0+1,-2) {$\text{Tw}({\bf 2},0)$};
    \draw[black, fill=black] (0-6,0) circle (0.07 cm);
    \draw[black, fill=black] (2-6,0) +(-2pt,-2pt) rectangle +(2pt,2pt) ;
    \node[diamond,gray,line width=1pt,draw]  at (2-6,0) {};
    \draw[black, fill=black] (1-6,1) circle (0.07 cm);
    \draw[black, fill=black] (1-6,-1) circle (0.07 cm);
    \draw[black] (0-6,0)--(1-6,1);
    \draw[black] (0-6,0)--(1-6,-1);
    \draw[black] (2-6,0)--(1-6,-1);
    \draw[black] (2-6,0)--(1-6,1);
    \node[right] at (0-6,0) {\tiny$(0,0)$};
    \node[right] at (2-6,0) {{\tiny$(1,1)$}};
    \node[right] at (1-6,1) {\tiny$(1,0)$};
    \node[right] at (1-6,-1) {\tiny$(0,1)$};
    \node at (0-6+1,-2) {$\text{Tw}({\bf 2},1)$};
    \draw[black, fill=black] (-10,-0.5+0) circle (0.07 cm);
    \draw[black, fill=black] (-10,-0.5+1) circle (0.07 cm);
    \draw[black] (-10,-0.5+0)--(-10,-0.5+1);
    \node[right] at (-10,-0.5+0) {\tiny$0$};
    \node[right] at (-10,-0.5+1) {\tiny$1$};
    \node at (-10,-0.5+-1) {$\bf 2$};
    \end{tikzpicture}
    \caption{The 2-element residuated lattice $\bf 2$, together with the lattice structure of all its possible twist structures, and the maximal sets that describe them. In each case, the maximal sets are shown in gray, and the identity element for the product $(1,\imath)$ is marked as a black square.}
    \label{2_element_twists}
\end{figure}

\begin{figure}[htp]
    \centering
    \begin{tikzpicture}[scale=0.7]
    \draw[black, fill=black] (0-6,-6+0) circle (0.07 cm);
    \node[diamond,gray,line width=1pt,draw]  at (0-6,-6+0) {};
    \draw[black, fill=black] (-2-6,-6+0) circle (0.07 cm);
    \draw[black] (2-6,-6+0) circle (0.07 cm);
    \draw[black, fill=black] (-1-6,-6+1) circle (0.07 cm);
    \draw[black] (1-6,-6+1) circle (0.07 cm);
    \draw[black, fill=black] (-1-6,-6+-1) circle (0.07 cm);
    \draw[black] (1-6,-6+-1) circle (0.07 cm);
    \draw[black, fill=black] (0-6,-6+2) +(-2pt,-2pt) rectangle +(2pt,2pt) ;
    \node[diamond,gray,line width=1pt,draw]  at (0-6,-6+2) {};
    \draw[black, fill=black] (0-6,-6+-2) circle (0.07 cm);
    \node[diamond,gray,line width=1pt,draw]  at (0-6,-6+-2) {};
    \draw[black] (0-6,-6+0)--(-1-6,-6+1);
    \draw[dotted] (0-6,-6+0)--(1-6,-6+1);
    \draw[black] (0-6,-6+0)--(-1-6,-6+-1);
    \draw[dotted] (0-6,-6+0)--(1-6,-6+-1);
    \draw[black] (-2-6,-6+0)--(-1-6,-6+-1);
    \draw[black] (-2-6,-6+0)--(-1-6,-6+1);
    \draw[dotted] (2-6,-6+0)--(1-6,-6+-1);
    \draw[dotted] (2-6,-6+0)--(1-6,-6+1);
    \draw[black] (0-6,-6+-2)--(-1-6,-6+-1);
    \draw[dotted] (0-6,-6+-2)--(1-6,-6+-1);
    \draw[black] (0-6,-6+2)--(-1-6,-6+1);
    \draw[dotted] (0-6,-6+2)--(1-6,-6+1);
    \node[right] at (0-6,-6+0) {\tiny$(a,a)$};
    \node[right] at (-2-6,-6+0) {\tiny$(0,0)$};
    \node[right] at (-1-6,-6+1) {\tiny$(a,0)$};
    \node[right] at (-1-6,-6+-1) {\tiny$(0,a)$};
    \node[right] at (0-6,-6+2) {{\tiny$(1,0)$}};
    \node[right] at (0-6,-6+-2) {\tiny$(0,1)$};
    \node at (0-6,-6+-3) {$\text{Tw}(\text{\L}_3,0)$};
    \draw[black, fill=black] (0-6,0) circle (0.07 cm);
    \draw[black, fill=black] (-2-6,0) circle (0.07 cm);
    \draw[black] (2-6,0) circle (0.07 cm);
    \draw[black, fill=black] (-1-6,1) circle (0.07 cm);
    \draw[black, fill=black] (1-6,1) +(-2pt,-2pt) rectangle +(2pt,2pt) ;
    \node[diamond,gray,line width=1pt,draw]  at (1-6,1) {};
    \draw[black, fill=black] (-1-6,-1) circle (0.07 cm);
    \draw[black, fill=black] (1-6,-1) circle (0.07 cm);
    \node[diamond,gray,line width=1pt,draw]  at (1-6,-1) {};
    \draw[black, fill=black] (0-6,2) circle (0.07 cm);
    \draw[black, fill=black] (0-6,-2) circle (0.07 cm);
    \draw[black] (0-6,0)--(-1-6,1);
    \draw[black] (0-6,0)--(1-6,1);
    \draw[black] (0-6,0)--(-1-6,-1);
    \draw[black] (0-6,0)--(1-6,-1);
    \draw[black] (-2-6,0)--(-1-6,-1);
    \draw[black] (-2-6,0)--(-1-6,1);
    \draw[dotted] (2-6,0)--(1-6,-1);
    \draw[dotted] (2-6,0)--(1-6,1);
    \draw[black] (0-6,-2)--(-1-6,-1);
    \draw[black] (0-6,-2)--(1-6,-1);
    \draw[black] (0-6,2)--(-1-6,1);
    \draw[black] (0-6,2)--(1-6,1);
    \node[right] at (0-6,0) {\tiny$(a,a)$};
    \node[right] at (-2-6,0) {\tiny$(0,0)$};
    \node[right] at (2-6,0) {\tiny$(1,1)$};
    \node[right] at (-1-6,1) {\tiny$(a,0)$};
    \node[right] at (1-6,1) {{\tiny$(1,a)$}};
    \node[right] at (-1-6,-1) {\tiny$(0,a)$};
    \node[right] at (1-6,-1) {\tiny$(a,1)$};
    \node[right] at (0-6,2) {\tiny$(1,0)$};
    \node[right] at (0-6,-2) {\tiny$(0,1)$};
    \node at (0-6,-3) {$\text{Tw}(\text{\L}_3,a)$};
    \draw[black, fill=black] (0-6,6+0) circle (0.07 cm);
    \draw[black, fill=black] (-2-6,6+0) circle (0.07 cm);
    \draw[black, fill=black] (2-6,6+0) +(-2pt,-2pt) rectangle +(2pt,2pt) ;
    \node[diamond,gray,line width=1pt,draw]  at (2-6,6+0) {};
    \draw[black, fill=black] (-1-6,6+1) circle (0.07 cm);
    \draw[black, fill=black] (1-6,6+1) circle (0.07 cm);
    \draw[black, fill=black] (-1-6,6+-1) circle (0.07 cm);
    \draw[black, fill=black] (1-6,6+-1) circle (0.07 cm);
    \draw[black, fill=black] (0-6,6+2) circle (0.07 cm);
    \draw[black, fill=black] (0-6,6+-2) circle (0.07 cm);
    \draw[black] (0-6,6+0)--(-1-6,6+1);
    \draw[black] (0-6,6+0)--(1-6,6+1);
    \draw[black] (0-6,6+0)--(-1-6,6+-1);
    \draw[black] (0-6,6+0)--(1-6,6+-1);
    \draw[black] (-2-6,6+0)--(-1-6,6+-1);
    \draw[black] (-2-6,6+0)--(-1-6,6+1);
    \draw[black] (2-6,6+0)--(1-6,6+-1);
    \draw[black] (2-6,6+0)--(1-6,6+1);
    \draw[black] (0-6,6+-2)--(-1-6,6+-1);
    \draw[black] (0-6,6+-2)--(1-6,6+-1);
    \draw[black] (0-6,6+2)--(-1-6,6+1);
    \draw[black] (0-6,6+2)--(1-6,6+1);
    \node[right] at (0-6,6+0) {\tiny$(a,a)$};
    \node[right] at (-2-6,6+0) {\tiny$(0,0)$};
    \node[right] at (2-6,6+0) {{\tiny$(1,1)$}};
    \node[right] at (-1-6,6+1) {\tiny$(a,0)$};
    \node[right] at (1-6,6+1) {\tiny$(1,a)$};
    \node[right] at (-1-6,6+-1) {\tiny$(0,a)$};
    \node[right] at (1-6,6+-1) {\tiny$(a,1)$};
    \node[right] at (0-6,6+2) {\tiny$(1,0)$};
    \node[right] at (0-6,6+-2) {\tiny$(0,1)$};
    \node at (0-6,6+-3) {$\text{Tw}(\text{\L}_3,1)$};
    \draw[black, fill=black] (-6,12+0) circle (0.07 cm);
    \draw[black, fill=black] (-6,12+1) circle (0.07 cm);
    \draw[black, fill=black] (-6,12-1) circle (0.07 cm);
    \draw[black] (-6,12+0)--(-6,12+1);
    \draw[black] (-6,12+0)--(-6,12-1);
    \node[right] at (-6,12+0) {\tiny$a$};
    \node[right] at (-6,12+1) {\tiny$1$};
    \node[right] at (-6,12-1) {\tiny$0=a^2$};
    \node at (-6,12-2) {$\text{\L}_3$};
    \draw[black] (0,-6+0) circle (0.07 cm);
    \draw[black, fill=black] (-2,-6+0) circle (0.07 cm);
    \draw[black] (2,-6+0) circle (0.07 cm);
    \draw[black, fill=black] (-1,-6+1) circle (0.07 cm);
    \draw[black] (1,-6+1) circle (0.07 cm);
    \draw[black, fill=black] (-1,-6+-1) circle (0.07 cm);
    \draw[black] (1,-6+-1) circle (0.07 cm);
    \draw[black, fill=black] (0,-6+2) +(-2pt,-2pt) rectangle +(2pt,2pt) ;
    \node[diamond,gray,line width=1pt,draw]  at (0,-6+2) {};
    \draw[black, fill=black] (0,-6+-2) circle (0.07 cm);
    \node[diamond,gray,line width=1pt,draw]  at (0,-6+-2) {};
    \draw[dotted] (0,-6+0)--(-1,-6+1);
    \draw[dotted] (0,-6+0)--(1,-6+1);
    \draw[dotted] (0,-6+0)--(-1,-6+-1);
    \draw[dotted] (0,-6+0)--(1,-6+-1);
    \draw[black] (-2,-6+0)--(-1,-6+-1);
    \draw[black] (-2,-6+0)--(-1,-6+1);
    \draw[dotted] (2,-6+0)--(1,-6+-1);
    \draw[dotted] (2,-6+0)--(1,-6+1);
    \draw[black] (0,-6+-2)--(-1,-6+-1);
    \draw[dotted] (0,-6+-2)--(1,-6+-1);
    \draw[black] (0,-6+2)--(-1,-6+1);
    \draw[dotted] (0,-6+2)--(1,-6+1);
    \node[right] at (-2,-6+0) {\tiny$(0,0)$};
    \node[right] at (-1,-6+1) {\tiny$(a,0)$};
    \node[right] at (-1,-6+-1) {\tiny$(0,a)$};
    \node[right] at (0,-6+2) {{\tiny$(1,0)$}};
    \node[right] at (0,-6+-2) {\tiny$(0,1)$};
    \node at (0,-6+-3) {$\text{Tw}(G_3,0)$};
    \draw[black, fill=black] (0,0) circle (0.07 cm);
    \draw[black, fill=black] (-2,0) circle (0.07 cm);
    \draw[black] (2,0) circle (0.07 cm);
    \draw[black, fill=black] (-1,1) circle (0.07 cm);
    \draw[black, fill=black] (1,1) +(-2pt,-2pt) rectangle +(2pt,2pt) ;
    \node[diamond,gray,line width=1pt,draw]  at (1,1) {};
    \draw[black, fill=black] (-1,-1) circle (0.07 cm);
    \draw[black, fill=black] (1,-1) circle (0.07 cm);
    \node[diamond,gray,line width=1pt,draw]  at (1,-1) {};
    \draw[black, fill=black] (0,2) circle (0.07 cm);
    \draw[black, fill=black] (0,-2) circle (0.07 cm);
    \draw[black] (0,0)--(-1,1);
    \draw[black] (0,0)--(1,1);
    \draw[black] (0,0)--(-1,-1);
    \draw[black] (0,0)--(1,-1);
    \draw[black] (-2,0)--(-1,-1);
    \draw[black] (-2,0)--(-1,1);
    \draw[dotted] (2,0)--(1,-1);
    \draw[dotted] (2,0)--(1,1);
    \draw[black] (0,-2)--(-1,-1);
    \draw[black] (0,-2)--(1,-1);
    \draw[black] (0,2)--(-1,1);
    \draw[black] (0,2)--(1,1);
    \node[right] at (0,0) {\tiny$(a,a)$};
    \node[right] at (-2,0) {\tiny$(0,0)$};
    \node[right] at (-1,1) {\tiny$(a,0)$};
    \node[right] at (1,1) {{\tiny$(1,a)$}};
    \node[right] at (-1,-1) {\tiny$(0,a)$};
    \node[right] at (1,-1) {\tiny$(a,1)$};
    \node[right] at (0,2) {\tiny$(1,0)$};
    \node[right] at (0,-2) {\tiny$(0,1)$};
    \node at (0,-3) {$\text{Tw}(G_3,a)$};
    \draw[black, fill=black] (0,6+0) circle (0.07 cm);
    \draw[black, fill=black] (-2,6+0) circle (0.07 cm);
    \draw[black, fill=black] (2,6+0) +(-2pt,-2pt) rectangle +(2pt,2pt) ;
    \node[diamond,gray,line width=1pt,draw]  at (2,6+0) {};
    \draw[black, fill=black] (-1,6+1) circle (0.07 cm);
    \draw[black, fill=black] (1,6+1) circle (0.07 cm);
    \draw[black, fill=black] (-1,6+-1) circle (0.07 cm);
    \draw[black, fill=black] (1,6+-1) circle (0.07 cm);
    \draw[black, fill=black] (0,6+2) circle (0.07 cm);
    \draw[black, fill=black] (0,6+-2) circle (0.07 cm);
    \draw[black] (0,6+0)--(-1,6+1);
    \draw[black] (0,6+0)--(1,6+1);
    \draw[black] (0,6+0)--(-1,6+-1);
    \draw[black] (0,6+0)--(1,6+-1);
    \draw[black] (-2,6+0)--(-1,6+-1);
    \draw[black] (-2,6+0)--(-1,6+1);
    \draw[black] (2,6+0)--(1,6+-1);
    \draw[black] (2,6+0)--(1,6+1);
    \draw[black] (0,6+-2)--(-1,6+-1);
    \draw[black] (0,6+-2)--(1,6+-1);
    \draw[black] (0,6+2)--(-1,6+1);
    \draw[black] (0,6+2)--(1,6+1);
    \node[right] at (0,6+0) {\tiny$(a,a)$};
    \node[right] at (-2,6+0) {\tiny$(0,0)$};
    \node[right] at (2,6+0) {{\tiny$(1,1)$}};
    \node[right] at (-1,6+1) {\tiny$(a,0)$};
    \node[right] at (1,6+1) {\tiny$(1,a)$};
    \node[right] at (-1,6+-1) {\tiny$(0,a)$};
    \node[right] at (1,6+-1) {\tiny$(a,1)$};
    \node[right] at (0,6+2) {\tiny$(1,0)$};
    \node[right] at (0,6+-2) {\tiny$(0,1)$};
    \node at (0,6+-3) {$\text{Tw}(G_3,1)$};
    \draw[black, fill=black] (0,12+0) circle (0.07 cm);
    \draw[black, fill=black] (0,12+1) circle (0.07 cm);
    \draw[black, fill=black] (0,12-1) circle (0.07 cm);
    \draw[black] (0,12+0)--(0,12+1);
    \draw[black] (0,12+0)--(0,12-1);
    \node[right] at (0,12+0) {\tiny$a=a^2$};
    \node[right] at (0,12+1) {\tiny$1$};
    \node[right] at (0,12-1) {\tiny$0$};
    \node at (0,12-2) {$G_3$};
    \draw[black] (0+6,-6+0) circle (0.07 cm);
    \draw[black, fill=black] (-2+6,-6+0) circle (0.07 cm);
    \draw[black] (2+6,-6+0) circle (0.07 cm);
    \draw[black, fill=black] (-1+6,-6+1) +(-2pt,-2pt) rectangle +(2pt,2pt) ;
    \draw[black] (1+6,-6+1) circle (0.07 cm);
    \draw[black, fill=black] (-1+6,-6+-1) circle (0.07 cm);
    \draw[black] (1+6,-6+-1) circle (0.07 cm);
    \draw[black, fill=black] (0+6,-6+2) circle (0.07 cm);
    \node[diamond,gray,line width=1pt,draw]  at (0+6,-6+2) {};
    \draw[black, fill=black] (0+6,-6+-2) circle (0.07 cm);
    \node[diamond,gray,line width=1pt,draw]  at (0+6,-6+-2) {};
    \draw[dotted] (0+6,-6+0)--(-1+6,-6+1);
    \draw[dotted] (0+6,-6+0)--(1+6,-6+1);
    \draw[dotted] (0+6,-6+0)--(-1+6,-6+-1);
    \draw[dotted] (0+6,-6+0)--(1+6,-6+-1);
    \draw[black] (-2+6,-6+0)--(-1+6,-6+-1);
    \draw[black] (-2+6,-6+0)--(-1+6,-6+1);
    \draw[dotted] (2+6,-6+0)--(1+6,-6+-1);
    \draw[dotted] (2+6,-6+0)--(1+6,-6+1);
    \draw[black] (0+6,-6+-2)--(-1+6,-6+-1);
    \draw[dotted] (0+6,-6+-2)--(1+6,-6+-1);
    \draw[black] (0+6,-6+2)--(-1+6,-6+1);
    \draw[dotted] (0+6,-6+2)--(1+6,-6+1);
    \node[right] at (-2+6,-6+0) {\tiny$(0,0)$};
    \node[right] at (-1+6,-6+1) {{\tiny$(e,0)$}};
    \node[right] at (-1+6,-6+-1) {\tiny$(0,e)$};
    \node[right] at (0+6,-6+2) {\tiny$(\top,0)$};
    \node[right] at (0+6,-6+-2) {\tiny$(0,\top)$};
    \node at (0+6,-6+-3) {$\text{Tw}(S_3,0)$};
    \draw[black, fill=black] (0+6,0) +(-2pt,-2pt) rectangle +(2pt,2pt) ;
    \node[diamond,gray,line width=1pt,draw]  at (0+6,0) {};
    \draw[black, fill=black] (-2+6,0) circle (0.07 cm);
    \draw[black] (2+6,0) circle (0.07 cm);
    \draw[black, fill=black] (-1+6,1) circle (0.07 cm);
    \draw[black] (1+6,1) circle (0.07 cm);
    \draw[black, fill=black] (-1+6,-1) circle (0.07 cm);
    \draw[black] (1+6,-1) circle (0.07 cm);
    \draw[black, fill=black] (0+6,2) circle (0.07 cm);
    \node[diamond,gray,line width=1pt,draw]  at (0+6,2) {};
    \draw[black, fill=black] (0+6,-2) circle (0.07 cm);
    \node[diamond,gray,line width=1pt,draw]  at (0+6,-2) {};
    \draw[black] (0+6,0)--(-1+6,1);
    \draw[dotted] (0+6,0)--(1+6,1);
    \draw[black] (0+6,0)--(-1+6,-1);
    \draw[dotted] (0+6,0)--(1+6,-1);
    \draw[black] (-2+6,0)--(-1+6,-1);
    \draw[black] (-2+6,0)--(-1+6,1);
    \draw[dotted] (2+6,0)--(1+6,-1);
    \draw[dotted] (2+6,0)--(1+6,1);
    \draw[black] (0+6,-2)--(-1+6,-1);
    \draw[dotted] (0+6,-2)--(1+6,-1);
    \draw[black] (0+6,2)--(-1+6,1);
    \draw[dotted] (0+6,2)--(1+6,1);
    \node[right] at (0+6,0) {{\tiny$(e,e)$}};
    \node[right] at (-2+6,0) {\tiny$(0,0)$};
    \node[right] at (-1+6,1) {\tiny$(e,0)$};
    \node[right] at (-1+6,-1) {\tiny$(0,e)$};
    \node[right] at (0+6,2) {\tiny$(\top,0)$};
    \node[right] at (0+6,-2) {\tiny$(0,\top)$};
    \node at (0+6,-3) {$\text{Tw}(S_3,e)$};
    \draw[black, fill=black] (0+6,6+0) circle (0.07 cm);
    \draw[black, fill=black] (-2+6,6+0) circle (0.07 cm);
    \draw[black, fill=black] (2+6,6+0) circle (0.07 cm);
    \node[diamond,gray,line width=1pt,draw]  at (2+6,6+0) {};
    \draw[black, fill=black] (-1+6,6+1) circle (0.07 cm);
    \draw[black, fill=black] (1+6,6+1) circle (0.07 cm);
    \draw[black, fill=black] (-1+6,6+-1) circle (0.07 cm);
    \draw[black, fill=black] (1+6,6+-1) +(-2pt,-2pt) rectangle +(2pt,2pt) ;
    \draw[black, fill=black] (0+6,6+2) circle (0.07 cm);
    \draw[black, fill=black] (0+6,6+-2) circle (0.07 cm);
    \draw[black] (0+6,6+0)--(-1+6,6+1);
    \draw[black] (0+6,6+0)--(1+6,6+1);
    \draw[black] (0+6,6+0)--(-1+6,6+-1);
    \draw[black] (0+6,6+0)--(1+6,6+-1);
    \draw[black] (-2+6,6+0)--(-1+6,6+-1);
    \draw[black] (-2+6,6+0)--(-1+6,6+1);
    \draw[black] (2+6,6+0)--(1+6,6+-1);
    \draw[black] (2+6,6+0)--(1+6,6+1);
    \draw[black] (0+6,6+-2)--(-1+6,6+-1);
    \draw[black] (0+6,6+-2)--(1+6,6+-1);
    \draw[black] (0+6,6+2)--(-1+6,6+1);
    \draw[black] (0+6,6+2)--(1+6,6+1);
    \node[right] at (0+6,6+0) {\tiny$(e,e)$};
    \node[right] at (-2+6,6+0) {\tiny$(0,0)$};
    \node[right] at (2+6,6+0) {\tiny$(\top,\top)$};
    \node[right] at (-1+6,6+1) {\tiny$(e,0)$};
    \node[right] at (1+6,6+1) {\tiny$(\top,e)$};
    \node[right] at (-1+6,6+-1) {\tiny$(0,e)$};
    \node[right] at (1+6,6+-1) {{\tiny$(e,\top)$}};
    \node[right] at (0+6,6+2) {\tiny$(\top,0)$};
    \node[right] at (0+6,6+-2) {\tiny$(0,\top)$};
    \node at (0+6,6+-3) {$\text{Tw}(S_3,\top)$};
    \draw[black, fill=black] (+6,12+0) circle (0.07 cm);
    \draw[black, fill=black] (+6,12+1) circle (0.07 cm);
    \draw[black, fill=black] (+6,12-1) circle (0.07 cm);
    \draw[black] (+6,12+0)--(+6,12+1);
    \draw[black] (+6,12+0)--(+6,12-1);
    \node[right] at (+6,12+0) {\tiny$e$};
    \node[right] at (+6,12+1) {\tiny$\top$};
    \node[right] at (+6,12-1) {\tiny$0$};
    \node at (+6,12-2) {$S_3$};
    \end{tikzpicture}
    \caption{The 3-element residuated lattices $\bf{\text{\L}_3}$, $\bf G_3$ and $\bf S_3$, together with the lattice structure of all their possible twist structures, and the maximal sets that describe them. In each case, the maximal sets are shown in gray, and the identity element for the product $(1,\imath)$ is marked as a square.}
    \label{3_element_twists}
\end{figure}

\subsection{Motivating examples}

We consider three different varieties of algebras that serve as the motivation for our study. The goal is to include them under the same theoretical framework.

\begin{ex}{}         A  \textbf{Kalman residuated lattice}  (\cite{BusCig03,AgMar}) is a commutative residuated lattice  ${\bf A}=(A, \wedge, \vee, \cdot, \rightarrow, e)$ satisfying the following equations:
        \begin{enumerate}[{(K}1{)}]
        \item \label{Kinvo}$(x\rightarrow e)\rightarrow e=x$,
        \item \label{Kdist_ast_inf}$(x\cdot y)\wedge e= (x\wedge e)\cdot (y\wedge e)$,
        \item \label{Kquasi_nelson} $((x\wedge e)\rightarrow y)\wedge (x \ra (y \jn e))=x\rightarrow y$ [equivalently, $((x\wedge e)\rightarrow y)\wedge ((\ln y\wedge e)\rightarrow
        \ln x)=x\rightarrow y$],
        \item \label{Kquasidist} $e \wedge (x \vee y) = (e \wedge x) \vee (e \wedge y)$ and
         \item \label{KquasidistUnnecessary}  $x \land (y \lor e) = (x \land y) \lor (x \land e).$
    \end{enumerate}

$(A, \wedge, \vee, \cdot, \rightarrow, \ln, e)$
is a commutative involutive residuated lattice, where $\ln x=x\rightarrow e$ and $\sim e=e$, i.e., $\m A$ is an odd residuated lattice.  Note that the expressions $(\ln y\wedge e)\rightarrow \ln x$ and $x \ra (y \jn e)$ in the two forms of (K\ref{Kquasi_nelson}) are equal by contraposition. It will follow from our analysis that  (K\ref{KquasidistUnnecessary}) is actually redundant.

 If ${\bf L}=(L, \vee, \wedge,\cdot, \to, 1)$ is an integral commutative residuated lattice, then ${\bf Tw}({\bf L},1)$ is a Kalman lattice. Moreover, for each Kalman lattice ${\bf A}$ there is an integral residuated lattice ${\bf L}$ such that ${\bf A}$ is isomorphic to a subalgebra of ${\bf Tw}({\bf L},1)$ \cite[Th. 2.5]{BusCig03}.
\end{ex}

\begin{ex}   A \textbf{Nelson residuated lattice}  (\cite{SpiVer08a,SpiVer08b,BusCig01,Fid,Sen90}) is a bounded integral  commutative residuated lattice ${\bf A}=(A, \vee, \wedge, \cdot, \to, \bot, e)$, that with $\neg x:=x\to \bot$ satisfies:
        \begin{enumerate}[{(NRL}1{)}]
            \item $\neg \neg x=x$.
            \item $(x^2\to y)\wedge ((\neg y)^2\to \neg x)=x\to y$.\end{enumerate}
 Involutive residuated lattices defined in \cite{BusCig01} are not the same as the ones defined here, since the involution $\neg$ is a definable operation if the constant $\bot$ is included in the type.
Nelson residuated lattices are term equivalent to Nelson algebras \cite{Rasi,Rasibook}, the algebraic counterpart of Nelson constructive logic with strong negation \cite{Nel}.

If ${\bf H}=(H, \vee, \wedge, \to, 0,1)$ is a Heyting algebra then viewing $\m H$ as a bounded residuated lattice with  $\cdot=\wedge$ and bottom element $0$, ${\bf Tw}({\bf H},0)$ is a Nelson residuated lattice with bottom element  $(0,1)$, which is added to the signature of ${\bf Tw}({\bf H},0)$. As in the previous case, there is a representation theorem since every Nelson residuated lattice is embeddable in  ${\bf Tw}({\bf H},0)$ for a Heyting algebra ${\bf H}$ (see \cite[Corollary 3.5]{BusCig01}).
\end{ex}

\begin{ex}
 A \textbf{Nelson paraconsistent residuated lattice} (NPc-lattice for short, see \cite{BusCig00}, \cite{BusCig02}, \cite{Odi-book} and \cite{ABGM}) is an odd distributive commutative residuated lattice  ${\bf A}=(A, \vee, \wedge, \cdot, \to, e)$ satisfying, for $\ln x = x\ra e$:
        \begin{enumerate}[{(NPc}1{)}]
            \item $\sim\sim x=x$
            \item $(x\cdot y)\wedge e = (x\wedge e)\cdot(y\wedge e)$
            \item $(x\wedge e)^2 = x\wedge e$
            \item $((x\wedge e)\rightarrow y)\wedge (x \ra (y \jn e))=x\rightarrow y$ [equivalently, $((x\wedge e)\rightarrow y)\wedge ((\ln y\wedge e)\rightarrow
        \ln x)=x\rightarrow y$]
        \end{enumerate}

NPc-lattices are special Kalman residuated lattices satisfying distributivity and (NPc3).
They were introduced in \cite{BusCig00} in order to present the algebraic semantics of Nelson paraconsistent logic \cite{Odi-book} within the framework of residuated lattices.

   If ${\bf H}=(H, \vee, \wedge, \to, 1)$ is a Brouwerian algebra (also called generalized Heyting algebra), then viewing $\m H$ as a residuated lattice with  $\cdot=\wedge$,  ${\bf Tw}({\bf H},1)$ is a Nelson paraconsistent residuated lattice. As in the previous cases, every NPc-lattice ${\bf A}$ can be embedded into a twist structure  ${\bf Tw}({\bf H},1)$ (see \cite{ABGM}).
\end{ex}

\subsection{Natural conuclei on twist structures}\label{section_natural_conuclei}

Another conucleus  will be important for understanding
involutive residuated lattices represented by twist structures; this time the conucleus will be on ${\bf{Tw}(L,\imath)}$.
Given a residuated lattice
${\bf L}$ and an element $\imath\in L$, we define the function $$\tw_\text{Tw}(a,b)=(a,  \imath \rd a \mt a \ld \imath)$$
on  ${\bf{Tw}(L,\imath)}$.
Recall that $\imath$ is  \textbf{cyclic} if $x \ld \imath= \imath \rd x$, for all $x$. In that case $ab \jn ba \leq \imath$  iff $a b \leq \imath$ iff $ba \leq \imath$, for all $a,b$. So, if $\imath$ is a cyclic (and in particular in the commutative case), $
\text{Tw}(L,\imath) = \{(a,b)\in L\times L^\partial: a b\leq \imath\}$
and also $\tw_\text{Tw}(a,b)=(a, a \ld  \imath )$.

Note that, under the assumption that $\imath$ is cyclic,  $\tw_\text{Tw}$ satisfies (C\ref{Cord1}),  (C\ref{Cidem}), (C\ref{Cmono}) and (C\ref{Cneutral}). Moreover, we can prove:

\begin{lem}
    Assume that the function $\tw_\text{Tw}(a,b)=(a,  \imath \rd a \mt a \ld \imath)$ is defined on ${\bf{Tw}(L,\imath)}$ with $\imath $ cyclic. Then for each $a, b, c, d\in L$ we have:
    \begin{enumerate}
        \item $\tw_\text{Tw}((a,b)\cdot  (c,d))=\tw_\text{Tw}(a,b)\cdot\tw_\text{Tw}(c,d)$
        \item $\tw_\text{Tw}((a,b)\vee (c,d))= \tw_\text{Tw}(a,b)\vee \tw_\text{Tw}(a,b)$
        \item $\left( \tw_\text{Tw}(a,b)\cdot (c,d)\right)\vee \left((a,b) \cdot \tw_\text{Tw}(c,d)\right)= (a,b)\cdot (c,d).$
    \end{enumerate}
    In particular, $\tw_\text{Tw}(a,b)$ is a conucleus.
\end{lem}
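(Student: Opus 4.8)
The plan is to unwind each identity coordinatewise and reduce it to a routine fact in $\m L$, using three consequences of $\imath$ being cyclic: that $\text{Tw}(L,\imath)=\{(a,b): ab\le\imath\}=\{(a,b): ba\le\imath\}$; that $\tw_\text{Tw}(a,b)=(a,a\ld\imath)$ (since $\imath\rd a=a\ld\imath$); and, crucially, that $(a\ld\imath)\,a=(\imath\rd a)\,a\le\imath$. Since $\tw_\text{Tw}$ never alters the first coordinate, in each case only the second coordinate needs work. Statement (2) is the easy one: the join in $\text{Tw}(L,\imath)$ is $(a,b)\jn(c,d)=(a\jn c,\,b\mt d)$, so both sides have first coordinate $a\jn c$ and the claim collapses to the standard identity $(a\jn c)\ld\imath=(a\ld\imath)\mt(c\ld\imath)$, valid in any residuated lattice, with no appeal to cyclicity.

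For (1), expanding the left side gives $\tw_\text{Tw}\bigl(ac,\,d\rd a\mt c\ld b\bigr)=\bigl(ac,\,(ac)\ld\imath\bigr)=\bigl(ac,\,c\ld(a\ld\imath)\bigr)$, using $(ac)\ld\imath=c\ld(a\ld\imath)$. The right side is $(a,a\ld\imath)\cdot(c,c\ld\imath)=\bigl(ac,\,(c\ld\imath)\rd a\mt c\ld(a\ld\imath)\bigr)$. Thus (1) amounts to the single inequality $c\ld(a\ld\imath)\le(c\ld\imath)\rd a$. By residuation this is $c\,\bigl(c\ld(a\ld\imath)\bigr)\,a\le\imath$, and since $c\,\bigl(c\ld(a\ld\imath)\bigr)\le a\ld\imath$ it follows from $(a\ld\imath)\,a\le\imath$, precisely the isolated consequence of cyclicity.

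For (3), I expand the three products to $\tw_\text{Tw}(a,b)\cdot(c,d)=\bigl(ac,\,d\rd a\mt c\ld(a\ld\imath)\bigr)$, $(a,b)\cdot\tw_\text{Tw}(c,d)=\bigl(ac,\,(c\ld\imath)\rd a\mt c\ld b\bigr)$, and $(a,b)\cdot(c,d)=\bigl(ac,\,d\rd a\mt c\ld b\bigr)$. Taking the join of the first two leaves $ac$ in the first coordinate and the meet of all four factors $d\rd a$, $c\ld(a\ld\imath)$, $(c\ld\imath)\rd a$, $c\ld b$ in the second. Hence the claim reduces to $d\rd a\mt c\ld b\le c\ld(a\ld\imath)$ and $d\rd a\mt c\ld b\le(c\ld\imath)\rd a$. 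For the first, $c\,(c\ld b)\le b$ yields $ac\,(d\rd a\mt c\ld b)\le ab\le\imath$, so $d\rd a\mt c\ld b\le(ac)\ld\imath=c\ld(a\ld\imath)$. For the second, residuation reduces the goal to $c\,(d\rd a\mt c\ld b)\,a\le\imath$, and since $c\,(d\rd a\mt c\ld b)\le c\,(c\ld b)\le b$ this follows from $ba\le\imath$.

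Finally, the ``in particular'' clause is immediate: (1) gives $\tw_\text{Tw}(x)\cdot\tw_\text{Tw}(y)=\tw_\text{Tw}(x\cdot y)$, so (C\ref{Cprod}) holds a fortiori, and together with (C\ref{Cord1})--(C\ref{Cmono}) and (C\ref{Cneutral}), already observed for $\tw_\text{Tw}$ under cyclicity, this exhibits $\tw_\text{Tw}$ as a conucleus. I expect the only delicate point --- the main obstacle --- to be the asymmetry exploited in (1) and in the second inequality of (3): the twist product interleaves left and right divisions, and the required bounds hold only because cyclicity promotes $ab\le\imath$ to $ba\le\imath$ and $(a\ld\imath)\,a\le\imath$; without cyclicity these steps break down.
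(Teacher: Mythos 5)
Your proof is correct and follows essentially the same route as the paper's: expand each identity coordinatewise, observe that only the second coordinates need attention, and use cyclicity to reconcile the left and right divisions (the paper cites the standard currying identities $(ac)\ld\imath=c\ld(a\ld\imath)$ and $\imath\rd(ac)=(c\ld\imath)\rd a$ from the literature where you re-derive the needed inequalities by residuation, and it absorbs the extra factor $ac\ld\imath$ in (3) exactly as you do). No gaps.
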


\begin{proof}
Using Lemma 2.6(6) of \cite{GJKO}, we have that
\begin{align*}\tw_\text{Tw}((a,b)\cdot (c,d)) &= (ac,(ac)\ld\imath) = (ac,\imath\rd(ac) \wedge (ac)\ld\imath )\\
&= (ac,  (c\ld\imath)\rd a)\wedge (c\ld(a\ld\imath)) = (a, a \ld \imath)\cdot (c, c\ld\imath)\\
&=\tw_\text{Tw}(a,b)\cdot\tw_\text{Tw}(c,d).  \end{align*}
    Using  Lemma 2.6(3) of \cite{GJKO}, we also have that:
 \begin{align*}
  \tw_\text{Tw}((a,b)\vee (c,d)) &=  \tw_\text{Tw}(a\vee c, b \mt d) = (a\vee c, (a\vee c)\ld \imath)\\
  &= (a\vee c, (a\ld \imath) \wedge (c\ld \imath)) = (a, a \ld \imath) \jn (c, c\ld\imath)\\
  &= \tw_\text{Tw}(a,b)\vee \tw_\text{Tw}(a,b).\end{align*}
Finally we observe that
 \begin{align*}
   \left( \tw_\text{Tw}(a,b) (c,d)\right)\vee \left((a,b)  \tw_\text{Tw}(c,d)\right) &=
    (a, a \ld \imath)(c,d)\jn (a,b)(c, c\ld\imath) \\
  &\hspace{-1cm}=  (ac, (d\rd a\wedge c\ld(a\ld\imath)) ) \jn (ac, (c\ld b\wedge (c\ld\imath)\rd a)) \\
    &\hspace{-1cm}=  (ac, (d\rd a\wedge c\ld(a\ld\imath))\wedge (c\ld b\wedge (c\ld\imath)\rd a)) \\
  &\hspace{-1cm}=  (ac, (d\rd a\wedge ac \ld \imath)\wedge (c\ld b\wedge ac\ld\imath)) \\
  &\hspace{-1cm}=(ac, d\rd a\wedge c\ld b) = (a,b)\cdot (c,d),
\end{align*}
where we used that $ac(d\rd a\wedge c\ld b) \leq ac (c \ld b)\leq ab \leq \imath$, so $ d\rd a\wedge c\ld b \leq ac \ld \imath$.
\end{proof}

\section{Nelson conucleus algebras}

Motivated by the properties of the pair of the involutive residuated lattice ${\bf {Tw}(L,\imath)}$ and the conucleus $\tw_\text{Tw}$,  we define the main class of algebras of the paper and link them to the preceding constructions.

\subsection{Nelson conuclei and the variety $\mathcal{NCA}$}

An operator $\tw$ on a residuated lattice ${\bf A}$ is called a  \textbf{Nelson conucleus} if $\tw$ is a conucleus and it also satisfies:
\begin{enumerate}[{(T}1{)}]
    \item\label{Tsup} $\tw(x\vee y) = \tw(x)\vee \tw(y)$
    \item\label{Tprod} $\tw(x y) = \tw(x) \tw(y)$
    \item\label{Tmult}  $x y \leq \tw(x) y \jn x\tw(y)$
\end{enumerate}

Any conucleus satisfies
$x  y \ge \tw(x) y \jn x\tw(y)$.  Then we have:

\begin{lem} Let $\m A$ be  a residuated lattice and $\tw$ a conucleus on it satisfying (T\ref{Tsup}) and (T\ref{Tprod}). Then $\tw$ is a Nelson conucleus iff
    \begin{enumerate}[{(T}1{)}]
        \setcounter{enumi}{3}
        \item\label{Tmulteq} $x  y = \tw(x) y \jn x\tw(y).$
    \end{enumerate}
\end{lem}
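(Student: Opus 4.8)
The plan is to show that, under the standing assumptions that $\tw$ is a conucleus satisfying (T\ref{Tsup}) and (T\ref{Tprod}), the two conditions (T\ref{Tmult}) and (T\ref{Tmulteq}) are equivalent. Since being a Nelson conucleus is precisely the conjunction of the conucleus axioms together with (T\ref{Tsup}), (T\ref{Tprod}) and (T\ref{Tmult}), this equivalence yields the claimed characterization at once.

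The key observation, already recorded in the text immediately preceding the statement, is that for any conucleus the inequality $\tw(x) y \jn x \tw(y) \le x y$ holds automatically. I would make this the first explicit step: by (C\ref{Cord1}) we have $\tw(x) \le x$ and $\tw(y) \le y$, and since multiplication in a residuated lattice is order-preserving in each argument (a direct consequence of the residuation condition), it follows that $\tw(x) y \le x y$ and $x \tw(y) \le x y$; taking the join gives the desired inequality.

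Given this, the equivalence is immediate. For the forward direction, assume $\tw$ is a Nelson conucleus, so (T\ref{Tmult}) supplies $x y \le \tw(x) y \jn x \tw(y)$; combining this with the reverse inequality from the previous step yields the equality (T\ref{Tmulteq}). Conversely, if (T\ref{Tmulteq}) holds, then in particular $x y \le \tw(x) y \jn x \tw(y)$, which is exactly (T\ref{Tmult}); together with the hypotheses that $\tw$ is a conucleus satisfying (T\ref{Tsup}) and (T\ref{Tprod}), this shows $\tw$ is a Nelson conucleus.

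There is no genuine obstacle here: the entire content is the monotonicity of the monoid operation, which furnishes the one inequality that is not among the hypotheses, so that (T\ref{Tmult}) and (T\ref{Tmulteq}) differ only by an inequality that always holds. The single point to handle carefully is to make the order-preservation of $\cdot$ explicit, since it is exactly what upgrades the one-sided condition (T\ref{Tmult}) to the equation (T\ref{Tmulteq}).
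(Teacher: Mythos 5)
Your proof is correct and follows exactly the route the paper intends: the paper simply records the observation that any conucleus satisfies $\tw(x)y \jn x\tw(y) \le xy$ (by (C1) and monotonicity of multiplication) and leaves the resulting equivalence of (T3) and (T4) implicit. You have merely made that one-line argument explicit, so there is nothing to add.
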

Observe that a Nelson conucleus  $\tw$ also satisfies $\tw(e)=e$
since  $e=ee=\tw(e)e \jn e\tw(e)=\tw(e)$. Thus given a residuated lattice ${\bf A}$ and a Nelson conucleus $\tw$ on ${\bf   A}$, the conucleus image ${\bf A}_{\tw}=(\tw[{\bf A}], \mt_\tw, \jn , \cdot, \ld_\tw, \rd_\tw, e)$ is a residuated lattice.

From the results of Section \ref{section_natural_conuclei} we get:

\begin{lem}\label{lem:example_of_conucleus} Given a residuated lattice ${\bf L}$ and a cyclic element $\imath \in L$, the operator $\tw_{\rm Tw}$ defined on ${\bf Tw}({\bf L},\imath)$ by
$$\tw_\text{Tw}(a,b)=(a, a \ld \imath)$$
 is a Nelson conucleus.
\end{lem}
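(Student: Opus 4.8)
The plan is to verify that $\tw_{\mathrm{Tw}}$ is a Nelson conucleus on ${\bf Tw}({\bf L},\imath)$ by checking the full list of defining conditions: that it is a conucleus (C\ref{Cord1})--(C\ref{Cneutral}) and that it additionally satisfies (T\ref{Tsup}), (T\ref{Tprod}) and (T\ref{Tmult}). The bulk of the work has in fact already been carried out in the two preceding lemmas, so the proof will be largely an assembly of those results together with the remarks immediately preceding this statement. First I would note that the discussion in Section \ref{section_natural_conuclei} already observes that, under cyclicity of $\imath$, the map $\tw_{\mathrm{Tw}}$ satisfies (C\ref{Cord1}), (C\ref{Cidem}), (C\ref{Cmono}) and (C\ref{Cneutral}); these four are straightforward: (C\ref{Cidem}) follows because $a\ld\imath$ depends only on the first coordinate, which $\tw_{\mathrm{Tw}}$ fixes; (C\ref{Cord1}) and (C\ref{Cmono}) are read off from the description of $\text{Tw}(L,\imath)$ as a downset with the order inherited from ${\bf L}\times{\bf L}^\partial$ (so that enlarging the second coordinate to $a\ld\imath$ produces a larger element in the twist order); and (C\ref{Cneutral}) uses that $(e,\imath)$ is the identity.

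Next I would invoke the previous lemma directly: its item (1) is exactly (T\ref{Tprod}), its item (2) is exactly (T\ref{Tsup}), and its item (3) is precisely the equation (T\ref{Tmulteq}), namely $\bigl(\tw_{\mathrm{Tw}}(a,b)\cdot(c,d)\bigr)\vee\bigl((a,b)\cdot\tw_{\mathrm{Tw}}(c,d)\bigr)=(a,b)\cdot(c,d)$. By the lemma just before this statement, once a conucleus satisfies (T\ref{Tsup}) and (T\ref{Tprod}), condition (T\ref{Tmulteq}) is equivalent to (T\ref{Tmult}); hence (T\ref{Tmult}) holds as well. The one condition not yet explicitly established among the conucleus axioms is (C\ref{Cprod}), $\tw_{\mathrm{Tw}}(x)\cdot\tw_{\mathrm{Tw}}(y)\le\tw_{\mathrm{Tw}}(x\cdot y)$; but this is subsumed by (T\ref{Tprod}), which gives the stronger equality $\tw_{\mathrm{Tw}}(x)\cdot\tw_{\mathrm{Tw}}(y)=\tw_{\mathrm{Tw}}(x\cdot y)$. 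Thus all of (C\ref{Cord1})--(C\ref{Cneutral}) hold, so $\tw_{\mathrm{Tw}}$ is a conucleus, and together with (T\ref{Tsup})--(T\ref{Tmult}) it is a Nelson conucleus.

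The only genuinely new point to record, and the one place where some care is warranted, is that $\tw_{\mathrm{Tw}}$ actually maps $\text{Tw}(L,\imath)$ into itself, i.e.\ that $(a,a\ld\imath)\in\text{Tw}(L,\imath)$ whenever $(a,b)$ is. This is immediate from the characterization $\text{Tw}(L,\imath)=\{(a,b):a\le\imath\rd b\mt b\ld\imath\}$ established earlier, since with $\imath$ cyclic one has $a(a\ld\imath)\le\imath$, so the pair lies in the twist set; equivalently it sits below the maximal element $((a\ld\imath)\ld\imath,\,a\ld\imath)$ of $M_\imath$. The main (and only real) obstacle is therefore not computational at all but organizational: confirming that every clause in the definition of Nelson conucleus has been supplied, paying attention to the fact that cyclicity is what collapses $a\ld\imath$ and $\imath\rd a$ so that the simplified formula $\tw_{\mathrm{Tw}}(a,b)=(a,a\ld\imath)$ coincides with the original $(a,\imath\rd a\mt a\ld\imath)$, and that it is this simplified form on which the preceding lemma's three identities were proved.
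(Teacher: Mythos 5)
Your proof is correct and is essentially the paper's own argument: the paper gives no separate proof, simply asserting that the lemma follows from the results of Section \ref{section_natural_conuclei}, which is exactly your assembly of the conucleus conditions (C\ref{Cord1})--(C\ref{Cmono}), (C\ref{Cneutral}) noted there, the three identities of the preceding lemma giving (T\ref{Tprod}), (T\ref{Tsup}) and (T\ref{Tmulteq}), the equivalence of (T\ref{Tmulteq}) with (T\ref{Tmult}), and the observation that (C\ref{Cprod}) is subsumed by (T\ref{Tprod}). The only quibble is the parenthetical in your verification of (C\ref{Cord1}): replacing the second coordinate $b$ by the larger (in $L$) element $a \ld \imath$ produces a \emph{smaller} element in the twist order, since the second coordinate is ordered dually, and this is exactly what $\tw_{\mathrm{Tw}}(x)\le x$ requires.
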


If ${\bf L}$ an integral and commutative residuated lattice, then ${\bf Tw}({\bf L},1)$ is a Kalman lattice and $\tw_\text{Tw}(a,b)=(a,1).$ If ${\bf H}$ is a Heyting algebra and ${\bf Tw}({\bf H},0)$ is a Nelson residuated lattice, then
$\tw_\text{Tw}(a,b)=(a,a \to 0)$.

We consider in Figure \ref{23_element_twists}
 all the twist structures from Figures \ref{2_element_twists} and \ref{3_element_twists} associated with all the two or three-element residuated lattices, together with the sets $\tw[{\bf A}]$.

\begin{figure}[!htp]
    \centering
    \begin{tikzpicture}[scale=0.7]
    \draw[black, fill=black] (0-6,-6+0) circle (0.07 cm);
    \node[circle,gray,line width=1pt,draw]  at (0-6,-6+0) {};
    \draw[black, fill=black] (-2-6,-6+0) circle (0.07 cm);
    \draw[black] (2-6,-6+0) circle (0.07 cm);
    \draw[black, fill=black] (-1-6,-6+1) circle (0.07 cm);
    \draw[black] (1-6,-6+1) circle (0.07 cm);
    \draw[black, fill=black] (-1-6,-6+-1) circle (0.07 cm);
    \draw[black] (1-6,-6+-1) circle (0.07 cm);
    \draw[black, fill=black] (0-6,-6+2) +(-2pt,-2pt) rectangle +(2pt,2pt) ;
    \node[circle,gray,line width=1pt,draw]  at (0-6,-6+2) {};
    \draw[black, fill=black] (0-6,-6+-2) circle (0.07 cm);
    \node[circle,gray,line width=1pt,draw]  at (0-6,-6+-2) {};
    \draw[black] (0-6,-6+0)--(-1-6,-6+1);
    \draw[dotted] (0-6,-6+0)--(1-6,-6+1);
    \draw[black] (0-6,-6+0)--(-1-6,-6+-1);
    \draw[dotted] (0-6,-6+0)--(1-6,-6+-1);
    \draw[black] (-2-6,-6+0)--(-1-6,-6+-1);
    \draw[black] (-2-6,-6+0)--(-1-6,-6+1);
    \draw[dotted] (2-6,-6+0)--(1-6,-6+-1);
    \draw[dotted] (2-6,-6+0)--(1-6,-6+1);
    \draw[black] (0-6,-6+-2)--(-1-6,-6+-1);
    \draw[dotted] (0-6,-6+-2)--(1-6,-6+-1);
    \draw[black] (0-6,-6+2)--(-1-6,-6+1);
    \draw[dotted] (0-6,-6+2)--(1-6,-6+1);
    \node[right] at (0-6,-6+0) {\tiny$(a,a)$};
    \node[right] at (-2-6,-6+0) {\tiny$(0,0)$};
    \node[right] at (-1-6,-6+1) {\tiny$(a,0)$};
    \node[right] at (-1-6,-6+-1) {\tiny$(0,a)$};
    \node[right] at (0-6,-6+2) {{\tiny$(1,0)$}};
    \node[right] at (0-6,-6+-2) {\tiny$(0,1)$};
    \node at (0-6,-6+-3) {$\text{Tw}(\text{\L}_3,0)$};
    \draw[black, fill=black] (0-6,0) circle (0.07 cm);
    \draw[black, fill=black] (-2-6,0) circle (0.07 cm);
    \draw[black] (2-6,0) circle (0.07 cm);
    \draw[black, fill=black] (-1-6,1) circle (0.07 cm);
    \draw[black, fill=black] (1-6,1) +(-2pt,-2pt) rectangle +(2pt,2pt) ;
    \node[circle,gray,line width=1pt,draw]  at (1-6,1) {};
    \draw[black, fill=black] (-1-6,-1) circle (0.07 cm);
    \draw[black, fill=black] (1-6,-1) circle (0.07 cm);
    \node[circle,gray,line width=1pt,draw]  at (1-6,-1) {};
    \draw[black, fill=black] (0-6,2) circle (0.07 cm);
    \draw[black, fill=black] (0-6,-2) circle (0.07 cm);
    \node[circle,gray,line width=1pt,draw]  at (0-6,-2) {};
    \draw[black] (0-6,0)--(-1-6,1);
    \draw[black] (0-6,0)--(1-6,1);
    \draw[black] (0-6,0)--(-1-6,-1);
    \draw[black] (0-6,0)--(1-6,-1);
    \draw[black] (-2-6,0)--(-1-6,-1);
    \draw[black] (-2-6,0)--(-1-6,1);
    \draw[dotted] (2-6,0)--(1-6,-1);
    \draw[dotted] (2-6,0)--(1-6,1);
    \draw[black] (0-6,-2)--(-1-6,-1);
    \draw[black] (0-6,-2)--(1-6,-1);
    \draw[black] (0-6,2)--(-1-6,1);
    \draw[black] (0-6,2)--(1-6,1);
    \node[right] at (0-6,0) {\tiny$(a,a)$};
    \node[right] at (-2-6,0) {\tiny$(0,0)$};
    \node[right] at (2-6,0) {\tiny$(1,1)$};
    \node[right] at (-1-6,1) {\tiny$(a,0)$};
    \node[right] at (1-6,1) {{\tiny$(1,a)$}};
    \node[right] at (-1-6,-1) {\tiny$(0,a)$};
    \node[right] at (1-6,-1) {\tiny$(a,1)$};
    \node[right] at (0-6,2) {\tiny$(1,0)$};
    \node[right] at (0-6,-2) {\tiny$(0,1)$};
    \node at (0-6,-3) {$\text{Tw}(\text{\L}_3,a)$};
    \draw[black, fill=black] (0-6,6+0) circle (0.07 cm);
    \draw[black, fill=black] (-2-6,6+0) circle (0.07 cm);
    \draw[black, fill=black] (2-6,6+0) +(-2pt,-2pt) rectangle +(2pt,2pt) ;
    \node[circle,gray,line width=1pt,draw]  at (2-6,6+0) {};
    \draw[black, fill=black] (-1-6,6+1) circle (0.07 cm);
    \draw[black, fill=black] (1-6,6+1) circle (0.07 cm);
    \draw[black, fill=black] (-1-6,6+-1) circle (0.07 cm);
    \draw[black, fill=black] (1-6,6+-1) circle (0.07 cm);
    \node[circle,gray,line width=1pt,draw]  at (1-6,6+-1) {};
    \draw[black, fill=black] (0-6,6+2) circle (0.07 cm);
    \draw[black, fill=black] (0-6,6+-2) circle (0.07 cm);
    \node[circle,gray,line width=1pt,draw]  at (0-6,6+-2) {};
    \draw[black] (0-6,6+0)--(-1-6,6+1);
    \draw[black] (0-6,6+0)--(1-6,6+1);
    \draw[black] (0-6,6+0)--(-1-6,6+-1);
    \draw[black] (0-6,6+0)--(1-6,6+-1);
    \draw[black] (-2-6,6+0)--(-1-6,6+-1);
    \draw[black] (-2-6,6+0)--(-1-6,6+1);
    \draw[black] (2-6,6+0)--(1-6,6+-1);
    \draw[black] (2-6,6+0)--(1-6,6+1);
    \draw[black] (0-6,6+-2)--(-1-6,6+-1);
    \draw[black] (0-6,6+-2)--(1-6,6+-1);
    \draw[black] (0-6,6+2)--(-1-6,6+1);
    \draw[black] (0-6,6+2)--(1-6,6+1);
    \node[right] at (0-6,6+0) {\tiny$(a,a)$};
    \node[right] at (-2-6,6+0) {\tiny$(0,0)$};
    \node[right] at (2-6,6+0) {{\tiny$(1,1)$}};
    \node[right] at (-1-6,6+1) {\tiny$(a,0)$};
    \node[right] at (1-6,6+1) {\tiny$(1,a)$};
    \node[right] at (-1-6,6+-1) {\tiny$(0,a)$};
    \node[right] at (1-6,6+-1) {\tiny$(a,1)$};
    \node[right] at (0-6,6+2) {\tiny$(1,0)$};
    \node[right] at (0-6,6+-2) {\tiny$(0,1)$};
    \node at (0-6,6+-3) {$\text{Tw}(\text{\L}_3,1)$};
    \draw[black, fill=black] (-6,12+0) circle (0.07 cm);
    \draw[black, fill=black] (-6,12+1) circle (0.07 cm);
    \draw[black, fill=black] (-6,12-1) circle (0.07 cm);
    \draw[black] (-6,12+0)--(-6,12+1);
    \draw[black] (-6,12+0)--(-6,12-1);
    \node[right] at (-6,12+0) {\tiny$a$};
    \node[right] at (-6,12+1) {\tiny$1$};
    \node[right] at (-6,12-1) {\tiny$0=a^2$};
    \node at (-6,12-2) {$\text{\L}_3$};
    \draw[black] (0,-6+0) circle (0.07 cm);
    \draw[black, fill=black] (-2,-6+0) circle (0.07 cm);
    \draw[black] (2,-6+0) circle (0.07 cm);
    \draw[black, fill=black] (-1,-6+1) circle (0.07 cm);
    \node[circle,gray,line width=1pt,draw]  at (-1,-6+1) {};
    \draw[black] (1,-6+1) circle (0.07 cm);
    \draw[black, fill=black] (-1,-6+-1) circle (0.07 cm);
    \draw[black] (1,-6+-1) circle (0.07 cm);
    \draw[black, fill=black] (0,-6+2) +(-2pt,-2pt) rectangle +(2pt,2pt) ;
    \node[circle,gray,line width=1pt,draw]  at (0,-6+2) {};
    \draw[black, fill=black] (0,-6+-2) circle (0.07 cm);
    \node[circle,gray,line width=1pt,draw]  at (0,-6+-2) {};
    \draw[dotted] (0,-6+0)--(-1,-6+1);
    \draw[dotted] (0,-6+0)--(1,-6+1);
    \draw[dotted] (0,-6+0)--(-1,-6+-1);
    \draw[dotted] (0,-6+0)--(1,-6+-1);
    \draw[black] (-2,-6+0)--(-1,-6+-1);
    \draw[black] (-2,-6+0)--(-1,-6+1);
    \draw[dotted] (2,-6+0)--(1,-6+-1);
    \draw[dotted] (2,-6+0)--(1,-6+1);
    \draw[black] (0,-6+-2)--(-1,-6+-1);
    \draw[dotted] (0,-6+-2)--(1,-6+-1);
    \draw[black] (0,-6+2)--(-1,-6+1);
    \draw[dotted] (0,-6+2)--(1,-6+1);
    \node[right] at (-2,-6+0) {\tiny$(0,0)$};
    \node[right] at (-1,-6+1) {\tiny$(a,0)$};
    \node[right] at (-1,-6+-1) {\tiny$(0,a)$};
    \node[right] at (0,-6+2) {{\tiny$(1,0)$}};
    \node[right] at (0,-6+-2) {\tiny$(0,1)$};
    \node at (0,-6+-3) {$\text{Tw}(G_3,0)$};
    \draw[black, fill=black] (0,0) circle (0.07 cm);
    \draw[black, fill=black] (-2,0) circle (0.07 cm);
    \draw[black] (2,0) circle (0.07 cm);
    \draw[black, fill=black] (-1,1) circle (0.07 cm);
    \draw[black, fill=black] (1,1) +(-2pt,-2pt) rectangle +(2pt,2pt) ;
    \node[circle,gray,line width=1pt,draw]  at (1,1) {};
    \draw[black, fill=black] (-1,-1) circle (0.07 cm);
    \draw[black, fill=black] (1,-1) circle (0.07 cm);
    \node[circle,gray,line width=1pt,draw]  at (1,-1) {};
    \draw[black, fill=black] (0,2) circle (0.07 cm);
    \draw[black, fill=black] (0,-2) circle (0.07 cm);
    \node[circle,gray,line width=1pt,draw]  at (0,-2) {};
    \draw[black] (0,0)--(-1,1);
    \draw[black] (0,0)--(1,1);
    \draw[black] (0,0)--(-1,-1);
    \draw[black] (0,0)--(1,-1);
    \draw[black] (-2,0)--(-1,-1);
    \draw[black] (-2,0)--(-1,1);
    \draw[dotted] (2,0)--(1,-1);
    \draw[dotted] (2,0)--(1,1);
    \draw[black] (0,-2)--(-1,-1);
    \draw[black] (0,-2)--(1,-1);
    \draw[black] (0,2)--(-1,1);
    \draw[black] (0,2)--(1,1);
    \node[right] at (0,0) {\tiny$(a,a)$};
    \node[right] at (-2,0) {\tiny$(0,0)$};
    \node[right] at (-1,1) {\tiny$(a,0)$};
    \node[right] at (1,1) {{\tiny$(1,a)$}};
    \node[right] at (-1,-1) {\tiny$(0,a)$};
    \node[right] at (1,-1) {\tiny$(a,1)$};
    \node[right] at (0,2) {\tiny$(1,0)$};
    \node[right] at (0,-2) {\tiny$(0,1)$};
    \node at (0,-3) {$\text{Tw}(G_3,a)$};
    \draw[black, fill=black] (0,6+0) circle (0.07 cm);
    \draw[black, fill=black] (-2,6+0) circle (0.07 cm);
    \draw[black, fill=black] (2,6+0) +(-2pt,-2pt) rectangle +(2pt,2pt) ;
    \node[circle,gray,line width=1pt,draw]  at (2,6+0) {};
    \draw[black, fill=black] (-1,6+1) circle (0.07 cm);
    \draw[black, fill=black] (1,6+1) circle (0.07 cm);
    \draw[black, fill=black] (-1,6+-1) circle (0.07 cm);
    \draw[black, fill=black] (1,6+-1) circle (0.07 cm);
    \node[circle,gray,line width=1pt,draw]  at (1,6+-1) {};
    \draw[black, fill=black] (0,6+2) circle (0.07 cm);
    \draw[black, fill=black] (0,6+-2) circle (0.07 cm);
    \node[circle,gray,line width=1pt,draw]  at (0,6+-2) {};
    \draw[black] (0,6+0)--(-1,6+1);
    \draw[black] (0,6+0)--(1,6+1);
    \draw[black] (0,6+0)--(-1,6+-1);
    \draw[black] (0,6+0)--(1,6+-1);
    \draw[black] (-2,6+0)--(-1,6+-1);
    \draw[black] (-2,6+0)--(-1,6+1);
    \draw[black] (2,6+0)--(1,6+-1);
    \draw[black] (2,6+0)--(1,6+1);
    \draw[black] (0,6+-2)--(-1,6+-1);
    \draw[black] (0,6+-2)--(1,6+-1);
    \draw[black] (0,6+2)--(-1,6+1);
    \draw[black] (0,6+2)--(1,6+1);
    \node[right] at (0,6+0) {\tiny$(a,a)$};
    \node[right] at (-2,6+0) {\tiny$(0,0)$};
    \node[right] at (2,6+0) {{\tiny$(1,1)$}};
    \node[right] at (-1,6+1) {\tiny$(a,0)$};
    \node[right] at (1,6+1) {\tiny$(1,a)$};
    \node[right] at (-1,6+-1) {\tiny$(0,a)$};
    \node[right] at (1,6+-1) {\tiny$(a,1)$};
    \node[right] at (0,6+2) {\tiny$(1,0)$};
    \node[right] at (0,6+-2) {\tiny$(0,1)$};
    \node at (0,6+-3) {$\text{Tw}(G_3,1)$};
    \draw[black, fill=black] (0,12+0) circle (0.07 cm);
    \draw[black, fill=black] (0,12+1) circle (0.07 cm);
    \draw[black, fill=black] (0,12-1) circle (0.07 cm);
    \draw[black] (0,12+0)--(0,12+1);
    \draw[black] (0,12+0)--(0,12-1);
    \node[right] at (0,12+0) {\tiny$a=a^2$};
    \node[right] at (0,12+1) {\tiny$1$};
    \node[right] at (0,12-1) {\tiny$0$};
    \node at (0,12-2) {$G_3$};
    \draw[black] (0+6,-6+0) circle (0.07 cm);
    \draw[black, fill=black] (-2+6,-6+0) circle (0.07 cm);
    \draw[black] (2+6,-6+0) circle (0.07 cm);
    \draw[black, fill=black] (-1+6,-6+1) +(-2pt,-2pt) rectangle +(2pt,2pt) ;
    \node[circle,gray,line width=1pt,draw]  at (-1+6,-6+1) {};
    \draw[black] (1+6,-6+1) circle (0.07 cm);
    \draw[black, fill=black] (-1+6,-6+-1) circle (0.07 cm);
    \draw[black] (1+6,-6+-1) circle (0.07 cm);
    \draw[black, fill=black] (0+6,-6+2) circle (0.07 cm);
    \node[circle,gray,line width=1pt,draw]  at (0+6,-6+2) {};
    \draw[black, fill=black] (0+6,-6+-2) circle (0.07 cm);
    \node[circle,gray,line width=1pt,draw]  at (0+6,-6+-2) {};
    \draw[dotted] (0+6,-6+0)--(-1+6,-6+1);
    \draw[dotted] (0+6,-6+0)--(1+6,-6+1);
    \draw[dotted] (0+6,-6+0)--(-1+6,-6+-1);
    \draw[dotted] (0+6,-6+0)--(1+6,-6+-1);
    \draw[black] (-2+6,-6+0)--(-1+6,-6+-1);
    \draw[black] (-2+6,-6+0)--(-1+6,-6+1);
    \draw[dotted] (2+6,-6+0)--(1+6,-6+-1);
    \draw[dotted] (2+6,-6+0)--(1+6,-6+1);
    \draw[black] (0+6,-6+-2)--(-1+6,-6+-1);
    \draw[dotted] (0+6,-6+-2)--(1+6,-6+-1);
    \draw[black] (0+6,-6+2)--(-1+6,-6+1);
    \draw[dotted] (0+6,-6+2)--(1+6,-6+1);
    \node[right] at (-2+6,-6+0) {\tiny$(0,0)$};
    \node[right] at (-1+6,-6+1) {{\tiny$(e,0)$}};
    \node[right] at (-1+6,-6+-1) {\tiny$(0,e)$};
    \node[right] at (0+6,-6+2) {\tiny$(\top,0)$};
    \node[right] at (0+6,-6+-2) {\tiny$(0,\top)$};
    \node at (0+6,-6+-3) {$\text{Tw}(S_3,0)$};
    \draw[black, fill=black] (0+6,0) +(-2pt,-2pt) rectangle +(2pt,2pt) ;
    \node[circle,gray,line width=1pt,draw]  at (0+6,0) {};
    \draw[black, fill=black] (-2+6,0) circle (0.07 cm);
    \draw[black] (2+6,0) circle (0.07 cm);
    \draw[black, fill=black] (-1+6,1) circle (0.07 cm);
    \draw[black] (1+6,1) circle (0.07 cm);
    \draw[black, fill=black] (-1+6,-1) circle (0.07 cm);
    \draw[black] (1+6,-1) circle (0.07 cm);
    \draw[black, fill=black] (0+6,2) circle (0.07 cm);
    \node[circle,gray,line width=1pt,draw]  at (0+6,2) {};
    \draw[black, fill=black] (0+6,-2) circle (0.07 cm);
    \node[circle,gray,line width=1pt,draw]  at (0+6,-2) {};
    \draw[black] (0+6,0)--(-1+6,1);
    \draw[dotted] (0+6,0)--(1+6,1);
    \draw[black] (0+6,0)--(-1+6,-1);
    \draw[dotted] (0+6,0)--(1+6,-1);
    \draw[black] (-2+6,0)--(-1+6,-1);
    \draw[black] (-2+6,0)--(-1+6,1);
    \draw[dotted] (2+6,0)--(1+6,-1);
    \draw[dotted] (2+6,0)--(1+6,1);
    \draw[black] (0+6,-2)--(-1+6,-1);
    \draw[dotted] (0+6,-2)--(1+6,-1);
    \draw[black] (0+6,2)--(-1+6,1);
    \draw[dotted] (0+6,2)--(1+6,1);
    \node[right] at (0+6,0) {{\tiny$(e,e)$}};
    \node[right] at (-2+6,0) {\tiny$(0,0)$};
    \node[right] at (-1+6,1) {\tiny$(e,0)$};
    \node[right] at (-1+6,-1) {\tiny$(0,e)$};
    \node[right] at (0+6,2) {\tiny$(\top,0)$};
    \node[right] at (0+6,-2) {\tiny$(0,\top)$};
    \node at (0+6,-3) {$\text{Tw}(S_3,e)$};
    \draw[black, fill=black] (0+6,6+0) circle (0.07 cm);
    \draw[black, fill=black] (-2+6,6+0) circle (0.07 cm);
    \draw[black, fill=black] (2+6,6+0) circle (0.07 cm);
    \node[circle,gray,line width=1pt,draw]  at (2+6,6+0) {};
    \draw[black, fill=black] (-1+6,6+1) circle (0.07 cm);
    \draw[black, fill=black] (1+6,6+1) circle (0.07 cm);
    \draw[black, fill=black] (-1+6,6+-1) circle (0.07 cm);
    \draw[black, fill=black] (1+6,6+-1) +(-2pt,-2pt) rectangle +(2pt,2pt) ;
    \node[circle,gray,line width=1pt,draw]  at (1+6,6+-1) {};
    \draw[black, fill=black] (0+6,6+2) circle (0.07 cm);
    \draw[black, fill=black] (0+6,6+-2) circle (0.07 cm);
    \node[circle,gray,line width=1pt,draw]  at (0+6,6+-2) {};
    \draw[black] (0+6,6+0)--(-1+6,6+1);
    \draw[black] (0+6,6+0)--(1+6,6+1);
    \draw[black] (0+6,6+0)--(-1+6,6+-1);
    \draw[black] (0+6,6+0)--(1+6,6+-1);
    \draw[black] (-2+6,6+0)--(-1+6,6+-1);
    \draw[black] (-2+6,6+0)--(-1+6,6+1);
    \draw[black] (2+6,6+0)--(1+6,6+-1);
    \draw[black] (2+6,6+0)--(1+6,6+1);
    \draw[black] (0+6,6+-2)--(-1+6,6+-1);
    \draw[black] (0+6,6+-2)--(1+6,6+-1);
    \draw[black] (0+6,6+2)--(-1+6,6+1);
    \draw[black] (0+6,6+2)--(1+6,6+1);
    \node[right] at (0+6,6+0) {\tiny$(e,e)$};
    \node[right] at (-2+6,6+0) {\tiny$(0,0)$};
    \node[right] at (2+6,6+0) {\tiny$(\top,\top)$};
    \node[right] at (-1+6,6+1) {\tiny$(e,0)$};
    \node[right] at (1+6,6+1) {\tiny$(\top,e)$};
    \node[right] at (-1+6,6+-1) {\tiny$(0,e)$};
    \node[right] at (1+6,6+-1) {{\tiny$(e,\top)$}};
    \node[right] at (0+6,6+2) {\tiny$(\top,0)$};
    \node[right] at (0+6,6+-2) {\tiny$(0,\top)$};
    \node at (0+6,6+-3) {$\text{Tw}(S_3,\top)$};
    \draw[black, fill=black] (+6,12+0) circle (0.07 cm);
    \draw[black, fill=black] (+6,12+1) circle (0.07 cm);
    \draw[black, fill=black] (+6,12-1) circle (0.07 cm);
    \draw[black] (+6,12+0)--(+6,12+1);
    \draw[black] (+6,12+0)--(+6,12-1);
    \node[right] at (+6,12+0) {\tiny$e$};
    \node[right] at (+6,12+1) {\tiny$\top$};
    \node[right] at (+6,12-1) {\tiny$0$};
    \node at (+6,12-2) {$S_3$};
    \draw[black, fill=black] (0-12,0) +(-2pt,-2pt) rectangle +(2pt,2pt) ;
    \node[circle,gray,line width=1pt,draw]  at (0-12,0) {};
    \draw[black, fill=black] (-1-12,-1) circle (0.07 cm);
    \draw[black] (1-12,-1) circle (0.07 cm);
    \draw[black, fill=black] (0-12,-2) circle (0.07 cm);
    \node[circle,gray,line width=1pt,draw]  at (0-12,-2) {};
    \draw[black] (0-12,0)--(-1-12,-1);
    \draw[dotted] (0-12,0)--(1-12,-1);
    \draw[black] (0-12,-2)--(-1-12,-1);
    \draw[dotted] (0-12,-2)--(1-12,-1);
    \node[right] at (0-12,0) {{\tiny$(1,0)$}};
    \node[right] at (-1-12,-1) {\tiny$(0,0)$};
    \node[right] at (0-12,-2) {\tiny$(0,1)$};
    \node at (0-12,-3) {$\text{Tw}({\bf 2},0)$};
    \draw[black, fill=black] (0-12,6+0) circle (0.07 cm);
    \draw[black, fill=black] (-1-12,6+-1) circle (0.07 cm);
    \draw[black, fill=black] (1-12,6+-1) +(-2pt,-2pt) rectangle +(2pt,2pt) ;
    \node[circle,gray,line width=1pt,draw]  at (1-12,6+-1) {};
    \draw[black, fill=black] (0-12,6+-2) circle (0.07 cm);
    \node[circle,gray,line width=1pt,draw]  at (0-12,6+-2) {};
    \draw[black] (0-12,6+0)--(-1-12,6+-1);
    \draw[black] (0-12,6+0)--(1-12,6+-1);
    \draw[black] (0-12,6+-2)--(-1-12,6+-1);
    \draw[black] (0-12,6+-2)--(1-12,6+-1);
    \node[right] at (0-12,6+0) {\tiny$(1,0)$};
    \node[right] at (-1-12,6+-1) {\tiny$(0,0)$};
    \node[right] at (1-12,6+-1) {{\tiny$(1,1)$}};
    \node[right] at (0-12,6+-2) {\tiny$(0,1)$};
    \node at (0-12,6+-3) {$\text{Tw}({\bf 2},1)$};
    \draw[black, fill=black] (-12,12+0) circle (0.07 cm);
    \draw[black, fill=black] (-12,12-1) circle (0.07 cm);
    \draw[black] (-12,12+0)--(-12,12-1);
    \node[right] at (-12,12+0) {\tiny$1$};
    \node[right] at (-12,12-1) {\tiny$0$};
    \node at (-12,12-2) {$\bf 2$};
    \end{tikzpicture}
    \caption{The 2 or 3-element residuated lattices $\bf 2$, $\bf{\text{\L}_3}$, $\bf G_3$ and $\bf S_3$, together with the lattice structure of all their possible twist structures, and the image of the conucleus. In each case, the images of $\tw$ are shown in gray, and the identity element for the product $(1,\imath)$ is marked as a square.}
    \label{23_element_twists}
\end{figure}

\medskip

In the involutive case (T\ref{Tmulteq}) can be rephrased:

\begin{lem}\label{lem:divisions}
    Let $\m A$ be  an involutive residuated lattice and $\tw$ a conucleus on $\m A.$ The operator $\tw$ satisfies (T\ref{Tmulteq}) iff it satisfies one of the following equivalent identities
    \begin{enumerate}[{(T}1{)}]
        \setcounter{enumi}{4}
        \item\label{TNelson}  $x\ld y=(\tw(x)\ld y)\wedge (\ln x\rd\tw(\ln y)),\ \ $
        $\ x\ld y=(\tw(x)\ld y)\wedge (x\ld \ln\tw(\ln y)),$
        \item[] $y \rd x=( y \rd \tw(x))\wedge (\tw(\ln y)\ld \ln x), \ \ $
        $\ \ y \rd x=( y \rd \tw(x))\wedge (\ln \tw(\ln y)\rd x).$
    \end{enumerate}
    In this case we also have that $\tw(\ln e)$ is cyclic in $\m A_\tw$.

\end{lem}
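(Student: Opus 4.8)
The plan is to translate both (T\ref{Tmulteq}) and the candidate identities in (T\ref{TNelson}) into statements about the monoid operation via the involution, where they should become literally the same equation. The tools are Lemma \ref{lem:propiedades_involucion}: the formulas $x\ld y=\ln(\ln y\cdot x)$ and $y\rd x=\ln(x\cdot\ln y)$; the dual formulas $\ln(u\cdot v)=v\ld\ln u=\ln v\rd u$, which in particular give contraposition $a\ld\ln b=\ln a\rd b$; and the De Morgan laws. I would first treat the two $x\ld y$ identities together, then the two $y\rd x$ identities, and finally the cyclicity claim.

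For the division $x\ld y$, I start from $x\ld y=\ln(\ln y\cdot x)$ and expand the product $\ln y\cdot x$ by (T\ref{Tmulteq}), obtaining $\ln y\cdot x=\tw(\ln y)\,x\jn \ln y\,\tw(x)$. Pushing $\ln$ inside with De Morgan and rewriting each negated product via $\ln(uv)=v\ld\ln u=\ln v\rd u$ yields $x\ld y=(\tw(x)\ld y)\mt(\ln x\rd\tw(\ln y))$, the first identity of (T\ref{TNelson}); choosing the other rewriting of the first factor gives the equivalent form $(\tw(x)\ld y)\mt(x\ld\ln\tw(\ln y))$, and the two coincide by contraposition $\ln x\rd\tw(\ln y)=x\ld\ln\tw(\ln y)$. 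Crucially, every step here is an identity of involutive residuated lattices except the single use of (T\ref{Tmulteq}); applying $\ln$ to the resulting identity and reading the chain backwards recovers $\ln y\cdot x=\tw(\ln y)x\jn\ln y\,\tw(x)$ for all $x,y$, i.e.\ (T\ref{Tmulteq}) after renaming. Hence (T\ref{Tmulteq}) is equivalent to the $x\ld y$ identities.

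The two $y\rd x$ identities are handled symmetrically, starting instead from $y\rd x=\ln(x\cdot\ln y)$ and expanding $x\cdot\ln y=\tw(x)\ln y\jn x\,\tw(\ln y)$; here the rewritings $\ln(\tw(x)\ln y)=y\rd\tw(x)$ and $\ln(x\,\tw(\ln y))=\tw(\ln y)\ld\ln x$ produce $y\rd x=(y\rd\tw(x))\mt(\tw(\ln y)\ld\ln x)$, with the second form again obtained by contraposition. As before the chain is reversible, so these too are equivalent to (T\ref{Tmulteq}); thus all four identities of (T\ref{TNelson}) are equivalent to (T\ref{Tmulteq}) and to one another.

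For the final assertion, I note that $\tw[A]$ is closed under $\cdot$ and that the order and product of $\m A_\tw$ are the restrictions of those of $\m A$; consequently $\tw(\ln e)$ is cyclic in $\m A_\tw$ exactly when $xy\le \tw(\ln e)\Leftrightarrow yx\le\tw(\ln e)$ for all $x,y\in\tw[A]$. To prove this, suppose $xy\le\tw(\ln e)\le\ln e$. Since the constant $\ln e$ is cyclic in $\m A$, $xy\le\ln e$ forces $yx\le\ln e$; applying $\tw$ and using $\tw(yx)=\tw(y)\tw(x)=yx$ (by (T\ref{Tprod}) and $x,y\in\tw[A]$) gives $yx=\tw(yx)\le\tw(\ln e)$, and the reverse implication is identical. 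I expect the main obstacle to be purely the non-commutative bookkeeping in the two middle paragraphs: with two residuals one must consistently choose between $\ln(uv)=v\ld\ln u$ and $\ln(uv)=\ln v\rd u$ so that exactly the displayed forms of (T\ref{TNelson}) appear, and one must verify that no step beyond the single application of (T\ref{Tmulteq}) is irreversible, so that the stated equivalences are genuine.
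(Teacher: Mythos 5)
Your proof of the equivalence of (T\ref{Tmulteq}) with the four identities of (T\ref{TNelson}) is exactly the paper's argument: expand $x\ld y=\ln(\ln y\cdot x)$ (resp.\ $y\rd x=\ln(x\cdot\ln y)$) using (T\ref{Tmulteq}), push $\ln$ through the join by De Morgan, rewrite each negated product via $\ln(uv)=v\ld\ln u=\ln v\rd u$, relate the two displayed variants by contraposition, and observe that the chain is reversible (only the single use of (T\ref{Tmulteq}) is a hypothesis, and applying $\ln$ plus the substitution $y\mapsto\ln y$ recovers it). Where you genuinely diverge is the cyclicity of $\tw(\ln e)$ in $\m A_\tw$: the paper computes directly that $\tw(x)\ld_\tw\tw(\ln e)=\tw(\tw(x)\ld\ln e)=\tw(\ln\tw(x))=\tw(\ln e\rd\tw(x))=\tw(\ln e)\rd_\tw\tw(x)$, using (C\ref{Cimpl}) and cyclicity of $\ln e$ in $\m A$, whereas you use the order-theoretic characterization that an element $\imath$ is cyclic iff $uv\le\imath\Leftrightarrow vu\le\imath$, and then descend cyclicity of $\ln e$ from $\m A$ to $\m A_\tw$ by applying $\tw$. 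Both are correct; yours makes the reason more transparent, the paper's is a one-line computation. One small repair to yours: you invoke (T\ref{Tprod}) to conclude $\tw(yx)=yx$, but (T\ref{Tprod}) is not among the lemma's hypotheses ($\tw$ is only a conucleus satisfying (T\ref{Tmulteq})). Fortunately it is not needed: for any weak conucleus $\delta(x)\delta(y)=\delta(\delta(x)\delta(y))$, so for $x,y\in\tw[A]$ the product $yx=\tw(y)\tw(x)$ is already a fixed point of $\tw$ and $\tw(yx)=yx$ follows without (T\ref{Tprod}).
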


\begin{proof}
    For (T\ref{TNelson}), by (T\ref{Tmulteq}) and Lemma~\ref{lem:propiedades_involucion}(1), we have
    \begin{align*}
    (x\ld y) &= \ln(\ln y\cdot x) =\ln(\tw(\ln y)\cdot x\vee (\ln y)\cdot \tw(x))\\
    &=\ln (\tw(\ln y)\cdot x) \wedge \ln((\ln y)\cdot \tw(x))=(\ln x\rd \tw(\ln y)) \wedge (\tw(x)\ld y).
    \end{align*}
    The converse follows the same reasoning and analogously we get the other equalities, using that $a \ld b= \ln a \rd \ln b$.
    Finally, for cyclicity,
    recalling (C\ref{Cimpl})  we obtain
    \begin{align*}
    \tw(x)\ld_\tw \tw(\ln e) &=\tw(\tw(x)\ld \tw(\ln e)) =\tw(\tw(x)\ld \ln e)\\
    &=\tw(\ln \tw(x)) =\tw(\ln e \rd \tw(x)) =\tw(\tw(\ln e)\rd \tw(x)) = \tw(\ln e) \rd_\tw \tw(x).\end{align*}
\end{proof}

Note that  $ \tw(x)\leq \tw(y)$ iff $ \tw(x)\leq y$ holds for all interior operators.

\begin{lem}\label{quasiequation} Let ${\bf A}$ be a residuated lattice,  $\tw$ a Nelson conucleus in ${\bf A}$ and $x,y\in A$.

    If $\tw(x)=\tw(y)$ and $\tw(\ln x)=\tw(\ln y)$, then $x=y$.
\end{lem}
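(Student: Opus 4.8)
The plan is to show that the map $x \mapsto (\tw(x), \tw(\ln x))$ separates points by reconstructing the order between two elements that share the same image. Since the two hypotheses $\tw(x)=\tw(y)$ and $\tw(\ln x)=\tw(\ln y)$ are symmetric in $x$ and $y$, it suffices to prove $x\le y$; running the identical argument with the roles of $x$ and $y$ exchanged then yields $y\le x$, and hence $x=y$. I would first translate the target inequality through residuation: because $x\cdot e=x$, we have $x\le y$ if and only if $e\le x\ld y$. So the whole problem reduces to verifying the single inequality $e\le x\ld y$.

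Next I would expand $x\ld y$ using the first form of identity (T\ref{TNelson}) from Lemma~\ref{lem:divisions}, available since $\m A$ is involutive and $\tw$ is a Nelson conucleus, namely $x\ld y=(\tw(x)\ld y)\mt(\ln x\rd\tw(\ln y))$. Thus $e\le x\ld y$ is equivalent to the conjunction of $e\le \tw(x)\ld y$ and $e\le \ln x\rd\tw(\ln y)$, and I would dispatch these two separately by unwinding the residuals. For the first, residuation gives $e\le \tw(x)\ld y$ iff $\tw(x)=\tw(x)\cdot e\le y$, which holds because $\tw(x)=\tw(y)\le y$ by hypothesis together with (C\ref{Cord1}). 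For the second, residuation gives $e\le \ln x\rd\tw(\ln y)$ iff $\tw(\ln y)\le \ln x$, which holds because $\tw(\ln y)=\tw(\ln x)\le \ln x$, again by hypothesis and (C\ref{Cord1}) applied to $\ln x$. Combining, $e$ lies below the meet, so $e\le x\ld y$ and therefore $x\le y$; the symmetric step then finishes the argument.

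The main obstacle is not any genuine computation but the bookkeeping of the two residuation directions. The tempting shortcut of substituting $e$ into the wrong argument of (T\ref{TNelson}) collapses each meetand to a trivially true inequality of the form $\tw(z)\le z$, carrying no information; the delicate point is to arrange the substitution so that each meetand instead becomes exactly one of the two hypotheses (in the guise $\tw(x)\le y$ respectively $\tw(\ln y)\le \ln x$). Once the correct one of the four equivalent forms of (T\ref{TNelson}) is selected and the residuation inequalities are oriented correctly, no appeal to (T\ref{Tprod})–(T\ref{Tmult}) beyond what is already packaged in Lemma~\ref{lem:divisions} is needed, and the proof is short.
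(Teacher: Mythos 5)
Your proof is correct and follows essentially the same route as the paper: both reduce $x\le y$ to $e\le x\ld y$, decompose $x\ld y$ via (T\ref{TNelson}) as $(\tw(x)\ld y)\wedge(\ln x\rd\tw(\ln y))$, and verify each meetand from the hypotheses by residuation, finishing by symmetry. The only cosmetic difference is that the paper phrases the key inequalities through the observation that $\tw(x)\le\tw(y)$ iff $\tw(x)\le y$, whereas you invoke (C\ref{Cord1}) directly; these are interchangeable.
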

\begin{proof}
    Observe that $\tw(x)\le \tw(y)$ iff $\tw(x)\le y.$ Now  $\tw(x)\leq y$ and $\tw(\ln y)\leq \ln x$, iff $e\leq \tw(x)\ld y$ and $e\leq \ln x \rd \tw(\ln y)$, iff $e\leq (\tw(x)\ld y)\wedge (\ln x \rd \tw(\ln y))$. By Equation (T\ref{TNelson}) this is equivalent to $e \le x \ld y$ and to $x\leq y$.
\end{proof}

We define the variety $\mathcal{NCA}$ of \textbf{Nelson conucleus algebras} whose elements are  algebras $({\bf A},\tw)$ such that ${\bf A}$ is a (cyclic) involutive residuated lattice and $\tw$  is a Nelson conucleus on ${\bf A}$.
As an immediate consequence of Lemma \ref{lem:example_of_conucleus} we have:

\begin{thm}
Given a residuated lattice ${\bf L}$ and a cyclic element $\imath\in L$, the pair $({\bf Tw(L,\imath), \tw_\text{Tw})}$, where $\tw_\text{Tw}(a,b)=(a, \imath \rd a \mt a \ld \imath)$ as before, is in $\mathcal{NCA}$.
\end{thm}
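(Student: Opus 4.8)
The plan is to unwind the definition of membership in $\mathcal{NCA}$ and then invoke the two results already at hand. By definition, a pair $(\m A,\tw)$ lies in $\mathcal{NCA}$ exactly when $\m A$ is a cyclic involutive residuated lattice and $\tw$ is a Nelson conucleus on $\m A$. So for the pair $(\m{Tw}(\m L,\imath),\tw_\text{Tw})$ there are precisely two points to verify, and I would dispatch each by citing an earlier statement.

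First I would observe that $\m{Tw}(\m L,\imath)$ is a cyclic involutive residuated lattice: this is exactly Theorem \ref{twists}, which asserts that $\m{Tw}(\m L,\imath)=\left(\text{Tw}(L,\imath),\mt,\jn,\cdot,\ld,\rd,\ln,(e,\imath)\right)$ is an involutive residuated lattice, cyclicity of $\ln$ being part of our standing convention for such algebras. Second I would note that $\tw_\text{Tw}$ is a Nelson conucleus on this algebra, which is precisely the content of Lemma \ref{lem:example_of_conucleus}. The single bookkeeping step is to reconcile the two displayed formulas for $\tw_\text{Tw}$: the present statement writes $\tw_\text{Tw}(a,b)=(a,\imath\rd a\mt a\ld\imath)$, whereas Lemma \ref{lem:example_of_conucleus} uses the shorter $(a,a\ld\imath)$. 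These coincide because $\imath$ is cyclic, so $\imath\rd a=a\ld\imath$ and hence $\imath\rd a\mt a\ld\imath=a\ld\imath$.

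I do not expect a genuine obstacle, as the statement is an immediate corollary of Theorem \ref{twists} and Lemma \ref{lem:example_of_conucleus}. The only place warranting care is the repeated use of cyclicity of $\imath$: it is what collapses the second coordinate to $a\ld\imath$, and it is also the hypothesis under which the conucleus axioms and the properties (T\ref{Tsup})--(T\ref{Tmult}) were verified in Section \ref{section_natural_conuclei}. Once this is acknowledged, the conclusion $(\m{Tw}(\m L,\imath),\tw_\text{Tw})\in\mathcal{NCA}$ follows directly by combining the two results.
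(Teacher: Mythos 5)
Your proposal is correct and matches the paper's own treatment: the paper states this theorem as an immediate consequence of Lemma \ref{lem:example_of_conucleus} (with Theorem \ref{twists} supplying the involutive residuated lattice structure), exactly as you do. Your added remark that cyclicity of $\imath$ collapses $\imath \rd a \mt a \ld \imath$ to $a \ld \imath$ is a sound piece of bookkeeping that the paper handles the same way in Section \ref{section_natural_conuclei}.
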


If $({\bf A},\tw) \in \mathcal{NCA}$, we define
$$x \dld  y= \tw(x) \ld y \ \mbox{ and  } \ \  y \drd x =  y\rd \tw(x).$$
With this notation, from   (C\ref{Cimpl}) we get $\tw(x \dld y)=\tw(\tw(x) \ld y)=\tw(\tw(x) \ld \tw(y))=\tw(x) \ld_{\tw} \tw(y),$ $ \ \tw(x \drd y)=\tw(\tw(x) \rd y)=\tw(\tw(x) \rd \tw(y))=\tw(x) \rd_{\tw} \tw(y)$
and   the equations (T\ref{TNelson})   become

\begin{equation}\label{eq:nelson'}\tag{T\ref{TNelson}'}
x \ld y=(x\dld y)\wedge (\ln x\drd \ln y) \qquad
y \rd x=(y\drd x)\wedge (\ln y\dld \ln x).
\end{equation}

\subsection{Twist-representation and a categorical adjunction}
So far we have described the process where from a residuated lattice $\m L$ and a cyclic element $\imath \in L$, we construct an algebra  $({\bf Tw}(L,\imath), \tw_\text{Tw})$ in $\mathcal{NCA}$.

We consider the category $\mathcal{RL}_\text{cy}$ with objects algebras $(\m L, \imath)$, where  $\m L$ is a residuated lattice and $\imath$ is a cyclic element of $\m L$; the morphisms are homomorphisms of these algebras (they preserve the cyclic element).
Also, note that $\mathcal{NCA}$ defines a category where the morphisms are the algebraic homomorphisms.

\medskip

For an object $(\m L, \imath) \in \mathcal{RL}_\text{cy}$ and a morphism $f$ in $\mathcal{RL}_\text{cy}$, we define
$$\m T(\m L, \imath)  = ({\bf Tw}(L,\imath), \tw_\text{Tw}) \ \ \ \mbox{ and } \ \ \ \m {T}(f)(a,b)  =(f(a), f(b)).$$

It can be easily verified that $\m{T}$ is a functor from $\mathcal{RL}_\text{cy}$ to $\mathcal{NCA}$.

\medskip

We also have the reverse process: given an algebra $({\bf A}, \tw)\in \mathcal{NCA}$, the algebra
\begin{align*}{\bf A}_{\tw}=  (\tw[A],\vee,\wedge_{\tw},\cdot, \ld_{\tw}, \rd_{\tw}, e)\end{align*}
is a  residuated lattice and $\tw(\ln e)$ cyclic element of ${\bf A}_{\tw}$. Therefore, for an object $({\bf A}, \tw)$ and a homomorphism $f: ({\bf A}, \tw) \rightarrow ({\bf B}, \tw')$ in $\mathcal{NCA}$ we define
$$\m R({\bf A}, \tw)=({\bf A}_{\tw}, \tw(\ln e)) \ \ \mbox{ and } \ \  \m R(f) \mbox{ to be the restriction of } f \mbox{ to }\m A_{\tw}.$$
It can be shown that $\m R$ is a functor from $\mathcal{NCA}$ to $\mathcal{RL}_\text{cy}$.

\medskip

We will show that the functors $\m R$ and $\m T$ form an adjunction. For that consider, for $(\m L, \imath) \in \mathcal{RL}_\text{cy}$ and $({\bf A}, \tw)\in \mathcal{NCA}$, the functions
$$\psi_{(\m L, \imath)}: (\m L, \imath) \rightarrow\m R\m T(\m L, \imath) \text{ given by } \psi_{(\m L, \imath)}(a)= (a, a \ld \imath)$$
and
$$\phi_{({\bf A}, \tw)}: ({\bf A}, \tw)\rightarrow \m T\m R({\bf A}, \tw) \text{ given by } \phi_{({\bf A}, \tw)}(x)= (\tw(x) , \tw(\ln x)).$$

\begin{thm}
Let $(\m L, \imath) \in \mathcal{RL}_\text{cy}$. The function $\psi_{(\m L, \imath)}$ is an isomorphism. Therefore the composition $\m {RT}$ of functors is naturally isomorphic to the identity functor on $\mathcal{RL}_\text{cy}$, via $\psi_{(\m L, \imath)}^{-1}$.
\end{thm}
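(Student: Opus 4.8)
The plan is to make the target $\m R\m T(\m L,\imath)$ completely explicit, verify that $\psi_{(\m L,\imath)}$ is a bijection, check that it preserves every operation and the distinguished cyclic element, and finally confirm naturality. First I would unwind $\m R\m T(\m L,\imath)$: by Lemma~\ref{lem:example_of_conucleus}, $\m T(\m L,\imath)=({\bf Tw}(L,\imath),\tw_{\text{Tw}})$ with $\tw_{\text{Tw}}(a,b)=(a,a\ld\imath)$, so applying $\m R$ yields the residuated lattice carried by the conucleus image $\tw_{\text{Tw}}[\text{Tw}(L,\imath)]=\{(a,a\ld\imath):a\in L\}$, together with the cyclic element $\tw_{\text{Tw}}(\ln e)$. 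Here every $a\in L$ occurs as a first coordinate, since $(a,a\ld\imath)\in\text{Tw}(L,\imath)$ by residuation. As the identity of ${\bf Tw}(L,\imath)$ is $(e,\imath)$, we get $\ln e=(\imath,e)$ and $\tw_{\text{Tw}}(\ln e)=(\imath,\imath\ld\imath)$. Thus $\psi_{(\m L,\imath)}(a)=(a,a\ld\imath)$ is injective (read off the first coordinate) and surjective onto this image, hence a bijection carrying $\imath$ to $\tw_{\text{Tw}}(\ln e)$.

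Next I would verify that $\psi_{(\m L,\imath)}$ is a homomorphism, operation by operation. The lattice parts are direct: preservation of $\jn$ reduces to $(a\jn a')\ld\imath=(a\ld\imath)\mt(a'\ld\imath)$, and since meets in ${\bf A}_{\tw}$ are computed as $\tw_{\text{Tw}}(-\mt-)$, preservation of $\mt$ reduces to the same identity in the first coordinate. For the monoid and residual operations I would substitute $b=a\ld\imath$ and $b'=a'\ld\imath$ into the twist formulas \eqref{eq:ast}, \eqref{eq:ld} and \eqref{eq:rd}. The product check reduces, using cyclicity of $\imath$ together with $(aa')\ld\imath=a'\ld(a\ld\imath)$ and $(a'\ld\imath)\rd a=\imath\rd(aa')=(aa')\ld\imath$, to showing the two inner terms coincide, giving $\psi_{(\m L,\imath)}(a)\cdot\psi_{(\m L,\imath)}(a')=(aa',(aa')\ld\imath)=\psi_{(\m L,\imath)}(aa')$.

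The step I expect to cost the most care is division, because there the twist formula produces a genuine meet that must collapse. Applying \eqref{eq:ld} and then $\tw_{\text{Tw}}$ (whose second coordinate is forced by the first), the value $\psi_{(\m L,\imath)}(a)\ld_{\tw}\psi_{(\m L,\imath)}(a')$ has first coordinate $a\ld a'\mt(a\ld\imath)\rd(a'\ld\imath)$, so I must prove $a\ld a'\le(a\ld\imath)\rd(a'\ld\imath)$; this follows from $a(a\ld a')(a'\ld\imath)\le a'(a'\ld\imath)\le\imath$, i.e.\ $(a\ld a')(a'\ld\imath)\le a\ld\imath$. Then the first coordinate is exactly $a\ld a'$ and $\tw_{\text{Tw}}$ supplies $(a\ld a')\ld\imath$ in the second, so $\psi_{(\m L,\imath)}(a)\ld_{\tw}\psi_{(\m L,\imath)}(a')=\psi_{(\m L,\imath)}(a\ld a')$; the case of $\rd_{\tw}$ is symmetric. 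Finally $\psi_{(\m L,\imath)}(e)=(e,e\ld\imath)=(e,\imath)$ is the identity of ${\bf A}_{\tw}$, so $\psi_{(\m L,\imath)}$ is an isomorphism in $\mathcal{RL}_{\text{cy}}$.

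For naturality, given a morphism $f\colon(\m L,\imath)\to(\m L',\imath')$ I would check the square $\m R\m T(f)\circ\psi_{(\m L,\imath)}=\psi_{(\m L',\imath')}\circ f$ pointwise: both maps send $a$ to $(f(a),f(a)\ld\imath')$, using that $\m R\m T(f)$ is the restriction of $(a,b)\mapsto(f(a),f(b))$ and that $f$ preserves $\ld$ and $\imath$, so $f(a\ld\imath)=f(a)\ld\imath'$. Hence $\psi$ is a natural isomorphism from the identity functor to $\m R\m T$, and $\psi^{-1}$ witnesses $\m R\m T\cong\text{Id}$ on $\mathcal{RL}_{\text{cy}}$, as claimed.
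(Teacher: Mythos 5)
Your proposal is correct and follows essentially the same route as the paper: unwind $\m R\m T(\m L,\imath)$ to the image $\{(a,a\ld\imath):a\in L\}$ with distinguished element $(\imath,\imath\ld\imath)$, check bijectivity via the first coordinate, and verify each operation by substituting $b=a\ld\imath$ into the twist formulas, with the same key inequality $a\ld a'\le(a\ld\imath)\rd(a'\ld\imath)$ (from $a(a\ld a')(a'\ld\imath)\le\imath$) collapsing the meet in the division case. Your explicit naturality check is a small addition the paper leaves implicit.
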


\begin{proof}
 For $(\m L, \imath) \in \mathcal{RL}_\text{cy}$, we have $\m T(\m L, \imath)= ({\bf Tw}(L,\imath), \tw_\text{Tw})$, where ${\bf Tw}(L,\imath)=\{(a,b) \in L^2: ab \leq \imath\}$. Applying $\m R$ to that we obtain $({\bf Tw}(L,\imath)_{\tw_\text{Tw}}, \tw_\text{Tw}(\ln (e, \imath)))$, where ${\bf Tw}(L,\imath)_{\tw_\text{Tw}}=\{(a, a \ld \imath): a \in L\}$ and also $\tw_\text{Tw}(\ln (e, \imath))=\tw_\text{Tw}(\imath, e)=(\imath, \imath \ld \imath)$.
 We will write $\psi$ for  $\psi_{(\m L, \imath)}$.  Clearly, $\psi: (\m L, \imath) \rightarrow ({\bf Tw}(L,\imath)_{\tw_\text{Tw}}, (\imath, \imath \ld \imath))$ is a bijection. Also,
\begin{itemize}
    \item $\psi(a) \jn \psi(b) = (a, a \ld \imath) \jn (b, b \ld \imath) = (a \jn b, a \ld \imath \wedge b \ld \imath) = (a\jn b, (a \jn b) \ld \imath) =\psi(a \jn b)$
    \item $\psi(a)\psi(b)  = (a, a \ld \imath)(b, b \ld \imath)= (a\cdot b,  (b\ld\imath)\rd a)\wedge (b\ld(a\ld\imath))= (a\cdot b,\imath\rd(a\cdot b) \wedge (a\cdot b)\ld\imath )=(ab, ab \ld \imath)=\psi(ab)$
    \item $\psi(a) \wedge_{\tw_\text{Tw}} \psi(b)  = (a, a \ld \imath) \wedge_{\tw_\text{Tw}} (b, b \ld \imath)= \tw_\text{Tw}(a \wedge b, a \ld \imath \jn b \ld \imath)= (a\wedge b, (a \wedge b) \ld \imath)=\psi(a \jn b)$
    \item $\psi(a)\ld_{\tw_\text{Tw}}\psi(b)  = \tw_\text{Tw}((a, a \ld \imath)\dld(b, b \ld \imath)) = \tw_\text{Tw}(a \ld b \mt (a \ld \imath) \rd (b \ld \imath),(b \ld \imath)a) =\\ \tw_\text{Tw}(a \ld b,(b \ld \imath)a)=\psi(a\ld b)$
    \item $\psi(b)\rd_{\tw_\text{Tw}}\psi(a)  = \tw_\text{Tw}((b, b \ld \imath)\drd(a, a \ld \imath)) = \tw_\text{Tw}(b \rd a \mt (b \ld \imath) \ld (a \ld \imath),a(b \ld \imath)) = \\ \tw_\text{Tw}(b \rd a,a(b \ld \imath))=\psi(b\rd a)$
\end{itemize}

For both divisions, we used the fact that $ a(a \ld b)  (b \ld \imath) \leq \imath$, that $(\imath \rd b) (b \rd a) a \leq \imath$ and cyclicity, to show that $a \ld b\leq (a \ld \imath) \rd (b \ld \imath)$ and $b \rd a \leq (b \ld \imath) \ld (a \ld \imath)$. Furthermore, $\psi(e)=(e,e\ld \imath)=(e, \imath)$ and  $\psi(\imath)= (\imath, \imath \ld \imath)$.

Therefore, $\psi$ is an isomorphism in $\mathcal{RL}_\text{cy}$.
\end{proof}

We will show that every algebra $({\bf A}, \tw)\in \mathcal{NCA}$ can be embedded in $\m {TR}({\bf A}, \tw)$. This means that it can be represented by a twist-product, in the following sense.

\begin{definition}\label{def:twist-product} A  \textsl{twist-product over} $(\m L, \imath) \in \mathcal{RL}_\text{cy}$  is a subalgebra of ${\bf {Tw}(L,\imath)}$ that contains $\tw_\text{Tw}[\text{Tw}(L,\imath)]$, i.e. all elements of the form $(a,\imath \rd a \mt a \ld \imath)$ for $a\in L$.\end{definition}

From Theorem \ref{twists} we have the algebra of pairs
$$    {\bf Tw(\m A_\tw,\imath)} = \left\{\left(\tw(x), \tw(y)\right)\in A_\tw \times A_\tw^\partial: \tw(x)\cdot \tw(y)\leq \imath\right\}$$
where $\imath= \tw(\ln e)$. 

\begin{thm}\label{Teorema_representacion}
Let  $({\bf A},\tw)\in \mathcal{NCA}$ and let $\imath=\tw(\sim e)$. The function $\phi_{({\bf A}, \tw)}: A \to Tw(\m A_\tw, \imath)$ given by the prescription $$x\mapsto (\tw(x),\tw(\sim x))$$ is an injective homomorphism from $(\m A, \tw)$ to $(\m {Tw}(\m A_\tw, \imath), \tw_{Tw})$. In particular, $({\bf A},\tw)$ is isomorphic to a twist-product over $(\m A_\tw,\imath)$.
\end{thm}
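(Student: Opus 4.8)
The plan is to verify three things in turn: that $\phi_{({\bf A},\tw)}$ lands in $Tw(\m A_\tw,\imath)$, that it preserves every operation of the $\mathcal{NCA}$-signature (so that it is a homomorphism into $(\m{Tw}(\m A_\tw,\imath),\tw_\text{Tw})$), and that it is injective. Injectivity is immediate from Lemma~\ref{quasiequation}: if $(\tw(x),\tw(\ln x))=(\tw(y),\tw(\ln y))$ then $\tw(x)=\tw(y)$ and $\tw(\ln x)=\tw(\ln y)$, whence $x=y$. For well-definedness, both coordinates lie in $\tw[A]=A_\tw$ by construction, and using (T\ref{Tprod}) together with monotonicity of $\tw$ we get $\tw(x)\cdot\tw(\ln x)=\tw(x\cdot\ln x)\le\tw(\ln e)=\imath$, since $x\cdot\ln x=x\cdot(x\ld\ln e)\le\ln e$ by residuation; hence $(\tw(x),\tw(\ln x))\in Tw(\m A_\tw,\imath)$. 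Recall also that $\imath=\tw(\ln e)$ is cyclic in $\m A_\tw$ by Lemma~\ref{lem:divisions}, so $\tw_\text{Tw}(a,b)=(a,a\ld_\tw\imath)$ there.

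For the homomorphism property, the lattice, negation and unit clauses are routine. Preservation of $\ln$ uses only $\ln\ln x=x$ and Equation~\eqref{eq:sim}. Preservation of $\jn$ combines (T\ref{Tsup}) in the first coordinate with the De Morgan law of Lemma~\ref{lem:propiedades_involucion}(3) and (C\ref{Cinf}) in the second (recalling that joins in $\m{Tw}$ act as meets of $\m A_\tw$ in the second coordinate); $\mt$ is dual. Since $\tw(e)=e$ and $\imath=\tw(\ln e)$, we have $\phi(e)=(e,\imath)$, the identity of $\m{Tw}(\m A_\tw,\imath)$. Finally, preservation of the conucleus reduces, via $\tw_\text{Tw}(a,b)=(a,a\ld_\tw\imath)$ and (C\ref{Cimpl}), to the identity $\tw(\ln\tw(x))=\tw(\tw(x)\ld\ln e)=\tw(x)\ld_\tw\tw(\ln e)$, which holds because $\ln u=u\ld\ln e$.

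The crux is the preservation of the two divisions (and of the product), where the defining identities of a Nelson conucleus are essential. Computing $\phi(x)\ld\phi(y)$ by Equation~\eqref{eq:ld} in $\m A_\tw$, the second coordinate is $\tw(\ln y)\cdot\tw(x)=\tw(\ln y\cdot x)$ by (T\ref{Tprod}), which equals $\tw(\ln(x\ld y))$ since $x\ld y=\ln(\ln y\cdot x)$ by Lemma~\ref{lem:propiedades_involucion}(1); this matches the second coordinate of $\phi(x\ld y)$. For the first coordinate one rewrites $\tw(x)\ld_\tw\tw(y)=\tw(x\dld y)$ and $\tw(\ln x)\rd_\tw\tw(\ln y)=\tw(\ln x\drd\ln y)$ via (C\ref{Cimpl}), takes their meet in $\m A_\tw$, and applies (C\ref{Cinf}) to pull $\tw$ outside, obtaining $\tw\bigl((x\dld y)\mt(\ln x\drd\ln y)\bigr)$; by the Nelson identity~\eqref{eq:nelson'} the argument collapses to $x\ld y$, giving $\tw(x\ld y)$ as required. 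The right division is symmetric, using the second line of~\eqref{eq:nelson'} and $y\rd x=\ln(x\cdot\ln y)$, and $\phi(x)\cdot\phi(y)$ is checked directly from Equation~\eqref{eq:ast} via (T\ref{Tprod}), (C\ref{Cimpl}) and (C\ref{Cinf}). I expect this division computation to be the main obstacle: matching the first coordinate requires the precise interplay between the conucleus-modified implications $\dld,\drd$ and the genuine implication $\ld$ encoded in~\eqref{eq:nelson'}, and one must check at each step that everything stays inside $A_\tw$ so that the operations being matched are those of $\m A_\tw$, not of $\m A$.

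For the final clause, $\phi[A]$ is a subalgebra of $\m{Tw}(\m A_\tw,\imath)$ containing, for each $a=\tw(u)\in A_\tw$, the element $\phi(a)=(a,\tw(\ln a))=(a,a\ld_\tw\imath)$; since these are exactly the elements in the range of $\tw_\text{Tw}$, we get $\phi[A]\supseteq\tw_\text{Tw}[Tw(\m A_\tw,\imath)]$, so $\phi[A]$ is a twist-product over $(\m A_\tw,\imath)$ in the sense of Definition~\ref{def:twist-product}, and $\phi$ is an isomorphism onto it.
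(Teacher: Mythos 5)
Your proposal is correct and follows essentially the same route as the paper: well-definedness via $\tw(x)\cdot\tw(\ln x)=\tw(x\cdot\ln x)\le\tw(\ln e)$, injectivity via Lemma~\ref{quasiequation}, the Nelson identity~\eqref{eq:nelson'} together with (C\ref{Cinf}) and (C\ref{Cimpl}) for the multiplicative structure, and the identity $\tw(\ln\tw(x))=\tw(x)\ld_\tw\imath$ for the conucleus and the twist-product clause. The only (harmless) divergence is that you verify both divisions directly, whereas the paper checks only $\cdot$ and $\ln$ and invokes Lemma~\ref{lem:propiedades_involucion} to get the divisions for free.
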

\begin{proof} We will write $\phi$ for $\phi_{({\bf A}, \tw)}$. To prove that the mapping is well-defined, observe that $x \cdot \ln x \leq x \cdot (x\ld(\ln e))\leq \ln e$ and therefore
\begin{align*}  \tw(x) \cdot \tw(\ln x) = \tw(x \cdot \ln x) \leq \tw(\ln e) = \imath.\end{align*}

Recall that the neutral element on $\text{Tw}(\m A_\tw,\imath)$ is the pair $(e,\imath).$ Clearly $\phi (e)=(\tw(e), \tw(\ln e))=(e, \imath).$
Using the fact that De Morgan laws hold in ${\bf A}$ one can easily check that $\wedge$ and $\vee$ are preserved by the mapping $\phi.$ Also the preservation of $\ln$ is straightforward from the definition of the mapping and the fact that $\ln$ is an involution in ${\bf A}$. Due to Lemma \ref{lem:propiedades_involucion} we only need to check that $\cdot$ is preserved.
Observe that for all $x, y\in A, $ $$\phi(x\cdot y)=(\tw(x\cdot y), \tw(\ln(x\cdot y)))$$ and
$$\phi(x)\cdot \phi(y)= (\tw(x) \cdot \tw(y), \tw(\ln y)\rd_\tw \tw(x)\wedge_\tw \tw(y)\ld_\tw \tw(\ln x)). $$
As $\tw(x\cdot y)=\tw(x) \cdot \tw(y)$, we just need to prove that the second coordinates of both pairs coincide.
An application of Equation \eqref{eq:nelson'}, Lemma \ref{lem:propoconucleous}  and Lemma \ref{lem:propiedades_involucion} give:
\begin{align*}
   \tw(\ln y)\rd_\tw \tw(x) \wedge_\tw \tw(y)\ld_\tw \tw(\ln x)
&= \tw( \tw(\tw(\ln y)\rd \tw(x))\wedge \tw(\tw(y)\ld \tw(\ln x)) )\\
&\hspace{-1cm}= \tw( \tw(\ln y \rd \tw(x)) \wedge \tw(\tw(y)\ld \ln x) )\\
&\hspace{-1cm}= \tw( (\ln y \rd \tw(x))\wedge (\tw(y)\ld \ln x) )\\
&\hspace{-1cm}= \tw(((\ln y)\drd x)\wedge (y\dld (\ln x)))\\
&\hspace{-1cm}= \tw(\ln y\rd x) = \tw(\ln (x\cdot y)). \end{align*}

\smallskip

To see that the morphism $\phi$ is injective, observe that if $x,y\in A$ are such that $\phi(x)=\phi(y)$, then $\tw(x)=\tw(y)$ and $\tw(\ln x)=\tw(\ln y)$ and Lemma \ref{quasiequation} implies injectivity.

\smallskip

For the last part, observe that
$\tw(\ln \tw(x))=\tw(\tw(x)\ld\ln e)
=\tw(\tw(x)\ld \tw(\ln e))
=\tw(x)\ld_\tw \imath,$
so using the cyclicity of $\imath=\tw(\ln e)$, we get
$\phi(\tw(x))
= (\tw(\tw(x)), \tw(\ln \tw(x)))
=(\tw(x),\tw(x)\ld_\tw \imath)=\tw_\text{Tw}(\phi(x))$.
\end{proof}

The function $\phi_{({\bf A}, \tw)}$ is not always an isomorphism, so the functors $\m R$ and $\m T$ do not form an equivalence between the categories $\mathcal{RL}_\text{cy}$ and $\mathcal{NCA}$. For example,  consider the set $S= Tw({\bf G}_3,0)\setminus \{(0,0)\}$  which is the universe of a subalgebra ${\bf S}$ of ${\bf Tw}({\bf G}_3,0)$, such that $\tw_{Tw}(S)=G_3$; see Figure \ref{3_element_twists}. Then the function $$\phi_{({\bf S},\tw_\text{Tw})}: ({\bf S},\tw_\text{Tw}) \rightarrow \m T \m R({\bf S}, \tw_\text{Tw})$$ is an embedding from ${\bf S}$ into the twist-product over $({\bf G}_3,0)$ that is not an isomorphism.
However, $\m R$ and $\m T$  form an adjunction.

 \begin{thm}\label{adjunction}
 The functor $\m R$ is left adjoint of the functor $\m T$. They form an adjunction between the categories $\mathcal{RL}_\text{cy}$ to $\mathcal{NCA}$, with unit $\phi$ and counit $\psi^{-1}$.
 \end{thm}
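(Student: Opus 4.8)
The plan is to prove the adjunction $\m R \dashv \m T$ directly from its unit and counit together with the two triangle (zig-zag) identities, rather than through a hom-set bijection. The components $\phi_{({\bf A}, \tw)}$ and $\psi_{(\m L, \imath)}$ are already morphisms in $\mathcal{NCA}$ and $\mathcal{RL}_\text{cy}$ respectively (the former by Theorem \ref{Teorema_representacion}, the latter being an isomorphism as shown above), so the first task is to upgrade these families to \emph{natural transformations} $\phi\colon \mathrm{Id}_{\mathcal{NCA}} \to \m{TR}$ and $\psi^{-1}\colon \m{RT} \to \mathrm{Id}_{\mathcal{RL}_\text{cy}}$, matching the identifications $\phi_{({\bf A},\tw)}\colon ({\bf A},\tw)\to\m{TR}({\bf A},\tw)$ and $\psi^{-1}_{(\m L,\imath)}\colon \m{RT}(\m L,\imath)\to(\m L,\imath)$.

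For naturality of $\phi$, given a morphism $g\colon ({\bf A}, \tw) \to ({\bf B}, \tw')$ in $\mathcal{NCA}$, I would evaluate both $\m{TR}(g) \circ \phi_{({\bf A}, \tw)}$ and $\phi_{({\bf B}, \tw')} \circ g$ at $x$ and observe that each returns $(g(\tw(x)), g(\tw(\ln x)))$, using that $g$ preserves $\tw$ and $\ln$ and that $\m R(g)$ and $\m T(\m R(g))$ act coordinatewise. Dually, for $f\colon (\m L, \imath) \to (\m M, \jmath)$ in $\mathcal{RL}_\text{cy}$, both $\m{RT}(f) \circ \psi_{(\m L, \imath)}$ and $\psi_{(\m M, \jmath)} \circ f$ evaluate to $(f(a), f(a) \ld \jmath)$, using $f(\imath) = \jmath$ and preservation of $\ld$; naturality of $\psi^{-1}$ is then immediate, since $\psi$ is a natural isomorphism.

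It then remains to verify the two triangle identities. The first, $\psi^{-1}_{\m R({\bf A}, \tw)} \circ \m R(\phi_{({\bf A}, \tw)}) = \mathrm{id}$, I would reduce to the cleaner equality $\m R(\phi_{({\bf A}, \tw)}) = \psi_{\m R({\bf A}, \tw)}$: since $\m R(\phi)$ is the restriction of $\phi$ to $\tw[A]$, on $w = \tw(x)$ it gives $(w, \tw(\ln w))$, and the computation $\tw(\ln \tw(x)) = \tw(x) \ld_\tw \imath$ already established at the end of the proof of Theorem \ref{Teorema_representacion} identifies this with $\psi_{\m R({\bf A}, \tw)}(w) = (w, w \ld_\tw \imath)$. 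The second identity, $\m T(\psi^{-1}_{(\m L, \imath)}) \circ \phi_{\m T(\m L, \imath)} = \mathrm{id}$, is a direct coordinate computation: since $\imath$ is cyclic we have $\tw_\text{Tw}(a,b) = (a, a\ld\imath)$ and $\ln(a,b) = (b,a)$, so $\phi_{\m T(\m L, \imath)}(a,b) = ((a, a\ld\imath),(b, b\ld\imath))$; applying $\m T(\psi^{-1})$ coordinatewise and using $\psi^{-1}(c, c\ld\imath) = c$ returns $(a,b)$, as required.

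The computations are essentially mechanical; the main point requiring care is the bookkeeping of the doubled functor applications $\m{TR}$ and $\m{RT}$, and throughout keeping track of which division ($\ld$ in ${\bf A}$ versus $\ld_\tw$ in ${\bf A}_\tw$) is in play. The only genuinely nontrivial input is the formula $\tw(\ln \tw(x)) = \tw(x) \ld_\tw \imath$ from Theorem \ref{Teorema_representacion}, which is exactly what makes the first triangle identity collapse to $\m R(\phi) = \psi$.
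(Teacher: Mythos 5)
Your proposal is correct and follows essentially the same route as the paper: both proofs establish the adjunction by verifying the two triangle identities through direct coordinate computations, with the formula $\tw(\ln \tw(x)) = \tw(x) \ld_\tw \tw(\ln e)$ from Theorem \ref{Teorema_representacion} doing the work in the first identity (your reformulation of that identity as $\m R(\phi) = \psi$ is only a cosmetic repackaging of the paper's computation). Your explicit attention to the naturality of $\phi$ and $\psi^{-1}$ is a point the paper leaves implicit, but it does not change the substance of the argument.
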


 \begin{proof}
 We need to show that for every $({\bf A}, \tw)\in \mathcal{NCA}$ and $(\m L, \imath) \in \mathcal{RL}_\text{cy}$ we have:
 $$1_{\m R(\m A, \tw)}=\psi^{-1}_{\m R(\m A, \tw)} \circ \m R (\phi_{(\m A, \tw)}) \ \mbox{ and } \ \ 1_{\m T(\m L, \imath)}=\m T(\psi^{-1}_{(\m L, \imath)}) \circ \phi_{\m T(\m L, \imath)} $$
For the second identity, recall that $\m T(\m L, \imath)=(\m {Tw}(\m L, \imath), \tw_{Tw})$.
For all $a,b \in  {Tw}(\m L, \imath)$, we have
\begin{align*}\m T(\psi^{-1}_{(\m L, \imath)}) \circ \phi_{\m T(\m L, \imath)}(a,b) &=
\m T(\psi^{-1}_{(\m L, \imath)}) ( \phi_{(\m {Tw}(\m L, \imath), \tw_{Tw})} (a,b) )\\
& = \m T(\psi^{-1}_{(\m L, \imath)}) (\tw_{Tw}(a,b),\tw_{Tw}(\ln(a,b)))\\
& = \m T(\psi^{-1}_{(\m L, \imath)}) (\tw_{Tw}(a,b),\tw_{Tw}(b,a))\\
& = \m T(\psi^{-1}_{(\m L, \imath)}) ((a,a \ld \imath),(b,b\ld \imath))\\
& = (\psi^{-1}_{(\m L, \imath)}(a,a \ld \imath),\psi^{-1}_{(\m L, \imath)}(b,b\ld \imath))\\
& =(a,b)
\end{align*}
  The first one can be written as
 $1_{(\m A_\tw, \tw(\ln e))}=\psi^{-1}_{(\m A_\tw, \tw(\ln e))} \circ \phi{(\m A, \tw)}|_{\m A_\tw} .$
 For all $x \in \m A_\tw$,
 \begin{align*}
 \psi^{-1}_{(\m A_\tw, \tw(\ln e))} \circ \phi{(\m A, \tw)}|_{\m A_\tw}(x) & =
 \psi^{-1}_{(\m A_\tw, \tw(\ln e))} (\phi{(\m A, \tw)}|_{\m A_\tw}(x) )\\ & =
  \psi^{-1}_{(\m A_\tw, \tw(\ln e))} (\tw(x), \tw(\ln x) )\\ & =
    \psi^{-1}_{(\m A_\tw, \tw(\ln e))} (x, x \ld \tw(\ln e) )\\ & =
    x
 \end{align*}
 We used the fact that $x=\tw(x)$ and that $\tw(\ln \tw(x))=\tw(\tw(x)\ld \tw(\ln e))=\tw(x)\ld_\tw  \tw(\ln e))$, which was already mentioned in the proof of Theorem~\ref{Teorema_representacion}.
 \end{proof}

\subsection{Rasiowa-style presentation}

The class of algebras we study is motivated by Nelson lattices and
paraconsistent Nelson lattices. The original representation of these
algebras in terms of twist-products is carried out by considering
the algebra ${\bf A}$ and defining an equivalence relation on ${\bf
A}$ which turns out to be a congruence with respect to some of the
original operations on ${\bf A}$  (see
\cite{Fid,Sen90,Odi-book,BusCig00}). Although our presentation has a
different flavor, it can be compared to the original presentations
of Nelson lattices (as in \cite{Rasi}, \cite{Rasibook}) and of
paraconsistent Nelson lattices (as in \cite{Odi-book}). We connect
these ideas in this section.

\medskip

Let $({\bf A}, \tw)\in \mathcal{NCA}$. Note that the implications $\ld$ and $\rd$ are not preserved by the Nelson conucleus $\tw.$ However, we will prove that the operations
$$x \dld  y= \tw(x) \ld y \ \mbox{ and  } \ \  y \drd x =  y\rd \tw(x)$$
are mapped into the quotient properly.

\begin{lem}\label{lem:imp_respects_congruence} For a residuated lattice ${\bf A}$ and a Nelson conucleus $\tw$ on ${\bf A}$ we have that the map $$n: A \rightarrow A_{\tw}, \ \ \mbox{ where  } \ \ n(x)=\tw(x),$$ is a homomorphism from  the algebra
    ${\bf \bar{A}}= \left(A, \mt, \jn ,\cdot, \dld, \drd,  e\right)$  to the algebra ${\bf A}_{\tw}=(\tw[{\bf A}], \mt_\tw, \jn , \cdot, \ld_{\tw}, \rd_{\tw},  \tw(e)).$

\end{lem}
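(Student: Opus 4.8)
The plan is to check preservation of each of the six operations separately; every identity needed has already been recorded earlier in the excerpt, so the proof amounts to assembling these facts, together with the observation that the skewed divisions $\dld$ and $\drd$ were introduced precisely so that $\tw$ would respect them.

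First I would treat the lattice operations. Preservation of joins, $\tw(x \jn y) = \tw(x) \jn \tw(y)$, is literally axiom (T\ref{Tsup}). For meets, since $\mt_\tw$ is defined on $\tw[A]$ by $u \mt_\tw v = \tw(u \mt v)$, property (C\ref{Cinf}) of Lemma~\ref{lem:propoconucleous} yields $\tw(x \mt y) = \tw(\tw(x) \mt \tw(y)) = \tw(x) \mt_\tw \tw(y)$, which is exactly $n(x \mt y) = n(x) \mt_\tw n(y)$.

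Next I would dispatch the monoid part. Preservation of the product, $\tw(x \cdot y) = \tw(x) \cdot \tw(y)$, is axiom (T\ref{Tprod}). The identity is immediate: $n(e) = \tw(e)$, which is by construction the neutral element of ${\bf A}_\tw$. The core of the statement is preservation of the two divisions, where the definitions $x \dld y = \tw(x)\ld y$ and $y \drd x = y \rd \tw(x)$, combined with property (C\ref{Cimpl}) applied twice, give
\begin{align*}
\tw(x \dld y) &= \tw(\tw(x)\ld y) = \tw(\tw(x)\ld \tw(y)) = \tw(x)\ld_\tw \tw(y),\\
\tw(y \drd x) &= \tw(y \rd \tw(x)) = \tw(\tw(y)\rd \tw(x)) = \tw(y)\rd_\tw \tw(x).
\end{align*}
These are precisely the homomorphism conditions $n(x \dld y) = n(x)\ld_\tw n(y)$ and $n(y \drd x) = n(y)\rd_\tw n(x)$; indeed the displayed computation already appears verbatim in the discussion following the definition of $\dld$ and $\drd$.

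I do not expect a genuine obstacle: the substantive work was front-loaded into the choice of the auxiliary operations $\dld, \drd$ and into Lemma~\ref{lem:propoconucleous}. The one point worth flagging is conceptual rather than technical, namely that the ordinary residuals $\ld, \rd$ are \emph{not} preserved by $\tw$ (so ${\bf A}$ is not a homomorphic preimage of ${\bf A}_\tw$ through $\tw$ in the original signature); it is exactly the passage to the skewed residuals that restores compatibility and lets $n = \tw$ serve as the quotient map in the Rasiowa-style presentation.
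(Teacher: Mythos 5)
Your proof is correct and follows essentially the same route as the paper's: joins and products from (T\ref{Tsup}) and (T\ref{Tprod}), meets and the unit from the general conucleus facts (the paper cites these as ``explained in the first section,'' which is your (C\ref{Cinf}) computation), and the divisions from (C\ref{Cimpl}) exactly as in the paper. No gaps.
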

\begin{proof}   All conuclei produce homomorphisms for meet and $e$, as explained in the first section, while  (T\ref{Tsup}) and (T\ref{Tprod}) give the homomorphism property for join and multiplication.
From  equation (C\ref{Cimpl}) we get $$\tw(x \dld y)=\tw(\tw(x) \ld y)=\tw(\tw(x) \ld \tw(y))=\tw(x) \ld_{\tw} \tw(y).$$ Analogously we prove $\drd$.
\end{proof}

With this result in mind, we consider the relations $\theta$  and $\preceq$ on $A$ defined by:
\begin{align}\label{eq:theta}
x \theta y \ &\mbox{ iff } \  \tw(x)=\tw(y) \\ x\preceq y \ &\mbox{ iff } \ \tw(x)\leq \tw(y) \ (\mbox{iff } \ \tw(x)\leq y).
\end{align}

\begin{lem} \label{preorder_bis}
    If $\tw$ is a Nelson conucleus on $\m A$, then the following hold.
    \begin{enumerate}

        \item The relation $\preceq$ is a preorder compatible with the operations of $\bf \bar{A}$.
        \item The relation $\theta$ is a congruence on ${\bf \bar{A}}$, which is the kernel of the map $n$. Thus, ${\bf \bar{A}} / \theta$ is isomorphic to ${\bf A}_{\tw}$.
    \end{enumerate}
\end{lem}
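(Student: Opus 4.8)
The plan is to derive everything from the single fact, established in Lemma~\ref{lem:imp_respects_congruence}, that $n(x)=\tw(x)$ is a surjective homomorphism from ${\bf \bar{A}}$ onto ${\bf A}_\tw$. The key observation is that both $\preceq$ and $\theta$ are pullbacks along $n$ of structure already present on ${\bf A}_\tw$: since the lattice order of ${\bf A}_\tw$ is just the restriction of the order of $\m A$ (because $\tw[A]$ is closed under $\jn$, so the order is determined by the common join), we have $x\preceq y$ iff $n(x)\le n(y)$, and $x\,\theta\,y$ iff $n(x)=n(y)$. Thus $\theta$ is exactly the kernel of $n$ and coincides with the symmetric part ${\preceq}\cap{\preceq}^{-1}$. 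The equivalence $\tw(x)\le\tw(y)$ iff $\tw(x)\le y$ noted before Lemma~\ref{quasiequation} justifies the two descriptions of $\preceq$.

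For part~(1), I would note that reflexivity and transitivity of $\preceq$ are inherited directly from the partial order $\le$ on ${\bf A}_\tw$ through $n$. For compatibility, the idea is to transport the compatibility of $\le$ with the residuated operations of ${\bf A}_\tw$ across $n$: if $x_i\preceq y_i$, i.e. $n(x_i)\le n(y_i)$, then applying $n$ (which by Lemma~\ref{lem:imp_respects_congruence} carries $\jn,\mt,\cdot,\dld,\drd$ to $\jn,\mt_\tw,\cdot,\ld_\tw,\rd_\tw$) together with the monotonicity of $\jn,\mt_\tw,\cdot$ yields $x_1\jn x_2\preceq y_1\jn y_2$, and similarly for $\mt$ and $\cdot$. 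The only delicate point is the variance of the divisions: since $\ld_\tw$ and $\rd_\tw$ are antitone in the divisor, $\preceq$ is compatible with $\dld,\drd$ in the \emph{signed} sense only, being monotone in the numerator and antitone in the denominator. This is precisely what $n$ transfers, the governing identities $\tw(x\dld y)=\tw(x)\ld_\tw\tw(y)$ and $\tw(x\drd y)=\tw(x)\rd_\tw\tw(y)$ coming from (C\ref{Cimpl}).

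For part~(2), since $n$ is a homomorphism its kernel $\theta$ is automatically a congruence on ${\bf \bar{A}}$. Surjectivity of $n$ onto ${\bf A}_\tw$ is immediate because $n$ fixes $\tw[A]$ pointwise (as $\tw(\tw(x))=\tw(x)$ by (C\ref{Cidem})), so every element of $A_\tw$ is a value of $n$. The first isomorphism theorem then gives ${\bf \bar{A}}/\theta\cong{\bf A}_\tw$ at once.

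The main obstacle here is conceptual rather than computational: one must be careful that ``compatible'' for $\preceq$ means respecting the natural variance of each operation, since $\dld$ and $\drd$ are order-reversing in one argument and therefore $\preceq$ is \emph{not} literally a subuniverse of ${\bf \bar{A}}\times{\bf \bar{A}}$. Once this is clear, the genuine work---that $\dld$ and $\drd$ descend correctly to the quotient---has already been carried out in Lemma~\ref{lem:imp_respects_congruence} through (C\ref{Cimpl}), and the present statement is essentially its order-theoretic repackaging combined with the first isomorphism theorem.
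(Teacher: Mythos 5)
Your proposal is correct and follows essentially the same route as the paper: the compatibility of $\preceq$ with $\jn$, $\mt$, $\cdot$ and (in the signed sense) with $\dld$, $\drd$ reduces, exactly as in the paper's explicit computations, to (T\ref{Tsup}), (T\ref{Tprod}), (C\ref{Cinf}) and (C\ref{Cimpl}) transported through the homomorphism $n$ of Lemma~\ref{lem:imp_respects_congruence}, and the claim about $\theta$ is obtained, as in the paper, from that homomorphism (kernel plus first isomorphism theorem). Your explicit caveat that $\preceq$ is compatible with the divisions only in the variance-respecting sense matches the paper's formulation $(x'\dld y)\preceq(x\dld y')$.
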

\begin{proof}
    The fact that $\preceq$ is a preorder is trivial. For the compatibility of $\preceq$ with multiplication, we have $x \preceq y$ and $z \preceq w$ implies $ \tw(x)\leq \tw(y)$ and $ \tw(z)\leq \tw(w)$, so $ \tw(x)\tw(z)\leq \tw(y)\tw(w)$, hence $ \tw(xz)\leq \tw(yw)$, by  (T\ref{Tprod}). Consequently we obtain $xz \preceq yw$. The compatibility with  $\vee$ and $\wedge$ is similar.
    To prove that $\preceq$ is compatible with  $\dld$ and $\drd$ we observe that if $\tw(x) \leq \tw(x')$ and $\tw(y) \leq \tw(y')$, then $(x'\dld y)\preceq (x\dld y')$ and $(y\drd x)\preceq (y'\drd x)$.
    We  give the proof for the case of $\dld$, as the other case is analogous. If $\tw(x)\leq \tw(x')$ and $\tw(y)\leq \tw(y')$, by an application of (C\ref{Cimpl}) in Lemma \ref{lem:propoconucleous} and the definition of $\dld$ we obtain
    $$\tw(x'\dld y)=\tw(\tw(x')\ld y)=\tw(\tw(x')\ld \tw(y))\leq \tw(\tw(x)\ld \tw(y'))=\tw(\tw(x)\ld y')=\tw(x\dld y').$$
        The claim about $\theta$ follow directly from the previous item and  from Lemma \ref{lem:imp_respects_congruence}.
\end{proof}

\begin{lem}\label{preordimp} For a residuated lattice ${\bf A}$ and a Nelson conucleus $\tw$ on ${\bf A}$  we have $$x\preceq y\ \mbox{ iff }\ \ (x\dld y)\dld(x\dld y)\leq (x\dld y)\ \mbox{ iff }\ \ (y\drd x)\drd(y\drd x)\leq (y\drd x).$$
\end{lem}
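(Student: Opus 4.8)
The plan is to reduce all three conditions to a single elementary observation about self-divisions of the form $z \dld z = \tw(z)\ld z$. First I would record the key reformulation via residuation. Setting $u = x\dld y = \tw(x)\ld y$, we have $e \le u$ iff $\tw(x)\cdot e \le y$ iff $\tw(x)\le y$, which is precisely $x\preceq y$. Symmetrically, setting $w = y\drd x = y\rd \tw(x)$, we have $e\le w$ iff $\tw(x)\le y$, again $x\preceq y$. Thus it suffices to prove that $e\le u$ iff $u\dld u \le u$, and dually that $e\le w$ iff $w\drd w \le w$; each of these then chains back to $x\preceq y$.

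For the forward direction, assume $e\le u$. Since $\tw$ is monotone (C\ref{Cmono}) and a Nelson conucleus satisfies $\tw(e)=e$, I obtain $e=\tw(e)\le \tw(u)$. Antitonicity of $\ld$ in its first argument then gives $u\dld u = \tw(u)\ld u \le e\ld u = u$, using $e\ld u = u$. For the converse, I would invoke (C\ref{Cord1}), namely $\tw(u)\le u$, which by residuation is equivalent to $e\le \tw(u)\ld u = u\dld u$. Hence the hypothesis $u\dld u\le u$ yields $e\le u\dld u \le u$, that is $e\le u$, as required. The argument for $w$ and $\drd$ is verbatim the same, replacing $\ld$ by $\rd$, using the right-hand residuation $e\le y\rd \tw(x)$ iff $\tw(x)\le y$, antitonicity of $\rd$ in its second argument (so $e\le \tw(w)$ gives $w\rd \tw(w)\le w\rd e = w$), and the fact that $\tw(w)\le w$ is equivalent to $e\le w\rd \tw(w)$.

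There is no genuine obstacle here; the argument is a short chain of residuation steps. The only points requiring care are twofold. First, the forward direction genuinely uses $\tw(e)=e$, which holds for Nelson conuclei but not for conuclei in general, so this is where the extra strength of (T\ref{Tmult}) enters. Second, the converse direction hinges on recognizing that $e\le \tw(u)\ld u$ holds \emph{automatically} from (C\ref{Cord1}) alone (via $\tw(u)\le u$), so that the stated inequality $u\dld u\le u$ is exactly the missing half that pins $u\dld u$ between $e$ and $u$. This symmetry between the two directions is what makes both implications one-liners.
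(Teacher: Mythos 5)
Your proof is correct and follows essentially the same route as the paper's: both reduce the claim to $e \le x\dld y$, prove the forward implication by monotonicity of $\tw$ plus antitonicity of $\ld$ in its first argument, and prove the converse from (C1) ($\tw(u)\le u$, hence $e\le u\dld u$) combined with the hypothesis. The only divergence is cosmetic: in the forward direction you invoke $\tw(e)=e$ and $e\ld u=u$, whereas the paper uses only (C5) together with $a\ld(b\ld c)=(b\cdot a)\ld c$ to get $\tw(e)\ld(\tw(x)\ld y)=\tw(x)\ld y$ --- which shows, contrary to your closing remark, that the lemma needs no Nelson axiom and holds for an arbitrary conucleus.
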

\begin{proof} If $x\preceq y$ then $\tw(x)\leq y$, so $e\leq \tw(x)\ld y$ and by Equation (C\ref{Cneutral}),
    \begin{align*}(x\dld y)\dld(x\dld y) &= \tw(\tw(x)\ld y)\ld(\tw(x)\ld y)\\
    &\leq \tw(e)\ld(\tw(x)\ld y) = (\tw(x)\cdot\tw(e))\ld y = \tw(x)\ld y\\
    &= x\dld y.\end{align*}
    Now assume $(x\dld y)\dld(x\dld y)\leq (x\dld y)$. Since $\tw(x\dld y) \leq  (x\dld y)$, we have
    \begin{align*} e \leq \tw(x\dld y) \ld  (x\dld y) = (x\dld y)\dld(x\dld y)\end{align*}
    and so $e\leq (x\dld y)=\tw(x)\ld y$. Therefore we obtain $\tw(x)\leq y$ and  $x\preceq y$. The equivalence for $\drd$ is analogous.
\end{proof}

We propose the following definition, which is phrased in terms of $\dld$, $\drd$, $\preceq$ and $\theta$, as a natural generalization of the definitions of  Nelson lattices and paraconsistent Nelson lattices.

\medskip

A \textbf{ Rasiowa-type algebra}  ${\bf \bar{A}}=(A, \vee, \wedge, \cdot, \dld, \drd, \sim, e)$ is an algebra with five binary operations $\vee, \wedge, \cdot, \dld, \drd$, a unary operation $\sim$ and a constant $e$, that satisfies:
\begin{enumerate}
    \item[(R1)] $(A, \vee, \wedge)$ is a lattice and $\ln$ is a De Morgan involution on it, namely $\ln \ln x = x$ and $\ln (x \vee y) = \ln x \wedge \ln y$;
    \item[(R2)] the relation $\preceq$ is a preorder, where  $x\preceq y$ if and only if $(x\dld y)\dld(x\dld y)\leq (x\dld y)$, and also if and only if $(y\drd x)\drd(y\drd x)\leq (y\drd x)$;
    \item[(R3)] the equivalence relation $\theta$ induced by $\preceq$ is a congruence on the algebra ${\bf \bar{A}}=(A,\vee,\wedge,\cdot,\dld, \drd, e)$ and the quotient  ${\bf \bar{A}}/\theta$ is a residuated lattice;
    \item[(R4)] $\ln (x \dld y) \mathrel{\theta} (\ln y \cdot x)\ $, $\ \ln (y \drd x) \mathrel{\theta} (x\cdot\ln y)\ $ and $\ \ln(x \cdot y) \mathrel{\theta} (y \dld\ln x)\wedge  (\ln y\drd x)$;
    \item[(R5)] $x \leq y$ if and only if $x \preceq y$ and $\ln y \preceq \ln x$.
    \item[(R6)]  for each $x\in A$,  $(x\dld\ln e) = (\ln e\drd x).$
\end{enumerate}

Lemmas \ref{lem:imp_respects_congruence}, \ref{preorder_bis}, \ref{preordimp}  and \ref{lem:divisions} yield the next immediate result:

\begin{thm}\label{NelsonStructure} If $({\bf A}, \tw)\in \mathcal{NCA}$, then for $x \dld  y= \tw(x) \ld y$ and$y \drd x =  y\rd \tw(x)$, the structure 
$$\overline{{\bf A}}=(A, \vee, \wedge, \cdot, \dld, \drd, \sim, e)$$
 is a Rasiowa-type algebra.
\end{thm}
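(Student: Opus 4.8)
The plan is to verify that $\overline{{\bf A}}$ satisfies each of the axioms (R1)--(R6); since $({\bf A},\tw)\in\mathcal{NCA}$, every clause is a direct translation of one of the preceding lemmas, so the work is almost entirely bookkeeping. Throughout I would use that the relation $\preceq$, defined abstractly in (R2) by $(x\dld y)\dld(x\dld y)\leq(x\dld y)$, agrees with the relation $\tw(x)\leq\tw(y)$ of \eqref{eq:theta}: this is exactly the content of Lemma~\ref{preordimp}, which simultaneously records the equivalent formulation through $\drd$. Consequently the induced equivalence $\theta$ coincides with $\tw(x)=\tw(y)$, the kernel of the map $n$ of Lemma~\ref{lem:imp_respects_congruence}.

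Axiom (R1) is immediate: $(A,\vee,\wedge)$ is a lattice because ${\bf A}$ is a residuated lattice, and $\ln$ is a De Morgan involution by the double-negation axiom together with Lemma~\ref{lem:propiedades_involucion}(3). For (R2), that $\preceq$ is a preorder compatible with the operations of $\overline{{\bf A}}$ is Lemma~\ref{preorder_bis}(1), while the two idempotency reformulations are Lemma~\ref{preordimp}. For (R3), Lemma~\ref{preorder_bis}(2) states that $\theta$ is a congruence on $\overline{{\bf A}}$ equal to $\ker n$ with $\overline{{\bf A}}/\theta\cong{\bf A}_\tw$; since ${\bf A}_\tw$ is a residuated lattice (by the remark following the definition of Nelson conucleus, using that $\tw(e)=e$), the quotient is of the required form.

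The computational heart is (R4). For the first identity, by Lemma~\ref{lem:propiedades_involucion}(1) we have $\ln(x\dld y)=\ln(\tw(x)\ld y)=\ln y\cdot\tw(x)$; applying $\tw$ and using (T\ref{Tprod}) with idempotency (C\ref{Cidem}) gives $\tw(\ln(x\dld y))=\tw(\ln y)\tw(x)=\tw(\ln y\cdot x)$, that is, $\ln(x\dld y)\mathrel{\theta}\ln y\cdot x$. The second identity is symmetric. For the third, I would invoke the Nelson form \eqref{eq:nelson'}: writing $\ln(x\cdot y)=\ln y\rd x$ by Lemma~\ref{lem:propiedades_involucion}(1) and substituting $\ln y$ for $y$ in the right-hand equation of \eqref{eq:nelson'} yields, on the nose, $\ln(x\cdot y)=(\ln y\rd\tw(x))\wedge(\tw(y)\ld\ln x)=(\ln y\drd x)\wedge(y\dld\ln x)$, whence the $\theta$-statement follows by applying $\tw$ (indeed the underlying equality already holds). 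This is the one place where the defining Nelson identity is genuinely used, and where I expect the only real care to be required.

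Finally, (R5) reproduces the argument of Lemma~\ref{quasiequation}: the forward direction uses monotonicity of $\tw$ together with $\tw(z)\leq z$ and the order-reversal of $\ln$, while the converse is precisely the observation that $e\leq\tw(x)\ld y$ and $e\leq\ln x\rd\tw(\ln y)$ combine, through (T\ref{TNelson}) of Lemma~\ref{lem:divisions}, into $e\leq x\ld y$. For (R6), since ${\bf A}$ is cyclic its negation constant $f=\ln e$ satisfies $z\ld f=f\rd z$ for all $z$; instantiating $z=\tw(x)$ gives $x\dld\ln e=\tw(x)\ld\ln e=\ln e\rd\tw(x)=\ln e\drd x$. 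With all six axioms matched to the cited lemmas the conclusion follows, the only genuine obstacle being the bookkeeping in (R4), whose third clause rests essentially on \eqref{eq:nelson'}.
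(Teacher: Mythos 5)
Your proposal is correct and follows exactly the route the paper takes: the paper declares the theorem an immediate consequence of Lemmas \ref{lem:imp_respects_congruence}, \ref{preorder_bis}, \ref{preordimp} and \ref{lem:divisions}, and your axiom-by-axiom verification of (R1)--(R6) simply spells out which of those lemmas (plus Lemma \ref{lem:propiedades_involucion} and cyclicity for (R4) and (R6)) discharges each clause, with the computations done correctly.
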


To close this circle of ideas we prove the following theorem. We use $[x]$ for the equivalence class of $x$ with respect to $\theta$.

\begin{thm}
    Assume ${\bf \bar{A}}=(A, \vee, \wedge, \cdot, \dld, \drd, \sim, e)$ is a Rasiowa-type algebra. After setting
    \begin{align*}x\ld y=\ln(\ln y\cdot x )  \mbox{ and } \ y\rd x=\ln (x\cdot \ln y),\end{align*}
    the algebra ${\bf A}=(A, \vee, \wedge, \cdot, \ld, \rd, \sim, e)$ is an involutive residuated lattice, which is isomorphic to a subalgebra of ${\bf Tw}({\bf \bar{A}}/\theta, [\sim e])$.   Moreover, if we set
    \begin{align*}\tw(x)=\ln(x\dld \ln e)\end{align*} 
    we have that the pair $({\bf A},\tw)$ is in $\mathcal{NCA}$. 
\end{thm}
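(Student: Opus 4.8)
The plan is to realize $\m A$ explicitly inside a twist structure. Set $\m L = \m{\bar A}/\theta$, which is a residuated lattice by (R3), and let $\imath = [\ln e]$. Axiom (R6) says $x\dld\ln e = \ln e\drd x$, which after passing to the quotient (where $\dld,\drd$ become $\ld,\rd$) reads $[x]\ld\imath = \imath\rd[x]$; hence $\imath$ is cyclic, $(\m L,\imath)\in\mathcal{RL}_{\text{cy}}$, and the twist structure $\m{Tw}(\m L,\imath)$ of Theorem \ref{twists} is defined. I would then introduce the candidate embedding $\phi\colon A\to L\times L$, $\phi(x) = ([x],[\ln x])$, and show it is an injective homomorphism onto a subalgebra of the involutive residuated lattice $\m{Tw}(\m L,\imath)$; the first assertion of the theorem follows at once, since $\m A$ would then inherit the involutive residuated lattice structure from its image.

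For the homomorphism property, preservation of $\ln$ is immediate from $\ln\ln x = x$, preservation of $\vee,\wedge$ follows from the De Morgan law in (R1) together with the fact that $\theta$ is a lattice congruence, and $\phi(e) = (e,\imath)$ is the unit of the twist. The essential computation is for the product: by the third clause of (R4) the second coordinate of $\phi(x\cdot y)$ equals $([y]\ld[\ln x])\wedge([\ln y]\rd[x])$, which is precisely the second coordinate of $\phi(x)\cdot\phi(y)$ under the twist multiplication \eqref{eq:ast}, while the first coordinates agree because $\theta$ is multiplicative. Since the target is involutive and I have defined $x\ld y = \ln(\ln y\cdot x)$ on $\m A$ by the very formula of Lemma \ref{lem:propiedades_involucion}(1), preservation of $\ld$ and $\rd$ is automatic once $\cdot$ and $\ln$ are preserved. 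Injectivity is exactly (R5): if $\phi(x)=\phi(y)$ then $x\preceq y$, $y\preceq x$, $\ln x\preceq\ln y$ and $\ln y\preceq\ln x$, whence $x\le y$ and $y\le x$.

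I expect the delicate point to be showing that $\phi$ is \emph{well defined}, i.e.\ that $\phi(x)$ actually lies in $\mathrm{Tw}(L,\imath)$; this amounts to $[x]\cdot[\ln x]\le\imath$, equivalently $x\cdot\ln x\preceq\ln e$. Applying the third clause of (R4) with the second variable equal to $\ln x$ gives $[\ln(x\cdot\ln x)] = ([\ln x]\ld[\ln x])\wedge([x]\rd[x])\ge e$, so that $e\preceq\ln(x\cdot\ln x)$; the difficulty is that the order-theoretic content I really need, namely $x\cdot\ln x\le\ln e$ in the lattice, is tied back to well-definedness through (R5) and must be extracted by combining (R4) with (R5). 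This is the abstract counterpart of the inequality $x(x\ld f)\le f$ used in the proof of Theorem \ref{Teorema_representacion}, and it is the step where the specific shape of the axioms, rather than formal conucleus manipulations, does the work.

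Finally, for the last assertion I would put $\tw(x) = \ln(x\dld\ln e)$ and verify $\phi(\tw(x)) = \tw_{\text{Tw}}(\phi(x))$: using the first clause of (R4) one gets $[\tw(x)] = [\ln(x\dld\ln e)] = [x]$ and $[\ln\tw(x)] = [x\dld\ln e] = [x]\ld\imath$, so both sides equal $([x],[x]\ld\imath)$, which is a legitimate twist element because $[x]([x]\ld\imath)\le\imath$. Consequently $\phi[A]$ is closed under $\tw_{\text{Tw}}$ and $\phi$ carries $\tw$ to the restriction of $\tw_{\text{Tw}}$. Since $(\m{Tw}(\m L,\imath),\tw_{\text{Tw}})\in\mathcal{NCA}$ and $\mathcal{NCA}$ is a variety, the subalgebra $(\phi[A],\tw_{\text{Tw}})$ lies in $\mathcal{NCA}$, and therefore so does its isomorphic copy $(\m A,\tw)$. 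The identification $\m A_\tw\cong\m{\bar A}/\theta$ with $\tw(\ln e)\leftrightarrow[\ln e]$ then matches this representation with the one supplied by Theorem \ref{Teorema_representacion}.
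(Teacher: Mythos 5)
Your overall strategy is exactly the paper's: the same map $x\mapsto([x],[\ln x])$, cyclicity of $[\ln e]$ from (R6), injectivity from (R5), preservation of $\vee,\wedge,\ln$ from (R1), of $\cdot$ from the third clause of (R4), and the computation $\phi(\tw(x))=([x],[x]\ld[\ln e])=\tw_{\text{Tw}}(\phi(x))$ for the final claim. All of those steps are correct as you state them.

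There is, however, a genuine gap at the point you yourself flag: well-definedness, i.e.\ $[x]\cdot[\ln x]\le[\ln e]$. Your attempted route, applying (R4) with second variable $\ln x$ to obtain $e\preceq\ln(x\cdot\ln x)$, does not close: what is needed is $x\cdot\ln x\preceq\ln e$ (not the stronger $x\cdot\ln x\le\ln e$ in $\m A$ that you name), and there is no way to pass from $[e]\le[\ln(x\cdot\ln x)]$ to $[x\cdot\ln x]\le[\ln e]$, because the quotient ${\bf \bar A}/\theta$ is only a residuated lattice --- it carries no involution, and the preorder $\preceq$ has no contraposition property relative to $\ln$. Saying the inequality ``must be extracted by combining (R4) with (R5)'' leaves the key step unproved. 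The paper closes it differently and directly: apply the third clause of (R4) with \emph{first} argument $e$ to get $[\ln x]=[\ln(e\cdot x)]\le[x\dld\ln e]=[x]\ld[\ln e]$, and then conclude $[x]\cdot[\ln x]\le[x]\cdot([x]\ld[\ln e])\le[\ln e]$ by residuation in the quotient (which is available by (R3)); the symmetric inequality $[\ln x]\cdot[x]\le[\ln e]$ is obtained the same way. Once this one-line argument replaces your sketch, the rest of your proof goes through and coincides with the paper's.
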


\begin{proof}
    We define a map $h$ on $A$ by
    \begin{align*}h(x)=([x], [\ln x])\end{align*}
and prove that it is an injective homomorphism from $\bf A$ into ${\bf Tw}({\bf \bar{A}}/\theta,[\sim e])$, and therefore show that $\bf A$ is an involutive residuated lattice.

    \medskip

    First we observe that the image of $h$ is in the set ${ Tw}({ \bar{A}}/\theta,[\sim e])$. Indeed, we need to see that  for each $x\in A$ we have
    \begin{equation}\label{eq:in_the_algebra}
    [x] \cdot [\ln x] \vee [\ln x] \cdot [x] \le [\ln e].
    \end{equation}
 To this aim, by (R4) we get that $[\ln x]=[\ln (e\cdot x)]\le  [x\dld \ln e]$ and therefore, since ${\bf \bar{A}}/\theta$ is a residuated lattice by (R3), we get
    \begin{align*}[x]\cdot [\ln x] \le [x]\cdot [x\dld \ln e] = [x]\cdot  ([x]\dld [\ln e]) \le  [\ln e], \end{align*}
    and similarly $[\ln x] \cdot [x] \leq [\ln e]$, ensuring that the inequality (\ref{eq:in_the_algebra}) holds.

    The injectivity of $h$ follows directly from (R5). 

    \medskip

    Note that (R1) implies that $h$ preserves $\vee, \wedge$ and $\ln.$ For the product, because of (R4) observe that for any $x,y\in A$ we have
\begin{align*}h(x)\cdot h(y) & =([x], [\ln x])\cdot ([y], [\ln y]) =([x\cdot y],  [(y \dld\ln x)\wedge  (\ln y\drd x)])\\
&= ([x\cdot y], [\ln(x\cdot y)]) = h(x\cdot y).
\end{align*}

We also have $h(e)=([e],[\ln e])$, which is the identity element of ${\bf Tw}({\bf \bar{A}}/\theta,[\sim e])$. As $\ld$ and $\rd$ are defined in terms of the product and the involution, we conclude that $h$ is an injective morphism from $\bf A$ into ${\bf Tw}({\bf \bar{A}}/\theta,[\sim e])$, and in particular $\bf A$ is an involutive residuated lattice.

\medskip

Observe now that as a consequence of (R1) and (R4) we have that
$$x\ \theta \ x\cdot e= x\cdot \ln\ln e\ \theta\  \ln( \ln e\drd x) \ \mbox{ and } \ \  x\ \theta \  e\cdot x=\ln\ln e\cdot x\ \theta\   \ln(x\dld \ln e),$$
so $ x\ \theta \ln( \ln e\drd x)\ \theta  \ \ln(x\dld \ln e).$
Therefore
\begin{align*}h(\tw(x)) &= h(\ln(x\dld \ln e) \vee \ln(\ln e \drd x))\\
                                                &= \left([\ln(x\dld \ln e) \vee \ln(\ln e \drd x)],[(x\dld \ln e) \wedge (\ln e \drd x)]\right)\\
                                                &= \left([x],([x]\dld [\ln e]) \wedge ([\ln e] \drd [x])\right)\\
                                                &= \tw_{Tw}(h(x)).\end{align*}
By (R6) $x\dld \ln e=\ln e \drd x$, thus $[\ln e]$ is cyclic,  $\tw_{Tw}$ is a Nelson conucleus and so is $\tw$.

\end{proof}

Therefore, Rasiowa-type algebras, which include Nelson lattices as defined by Rasiowa and eN4-lattices (see \cite{Sen90,BusCig00}), are term equivalent to Nelson conucleus algebras. Thus the Rasiowa-style definition can be exchanged by one that has the advantage of being internal to ${\bf A}$ and can be seen as  providing representatives for Rasiowa's equivalence classes $[x]$ via the elements $\tw(x) \in [x]$. We also note that in the above proof we opted for the easier argument of embedding the algebra into a twist product, but the term equivalence can be proved directly; for example, from (R5) in order to prove associativity of multiplication it suffices to show that $[(xy)z]=[x(yz)]$ and $[\ln((xy)z)]=[\ln(x(yz))]$.

\section{Algebras with a term definable Nelson conucleus}\label{Sec4}

 Nelson residuated lattices, paraconsistent Nelson residuated lattices and Kalman lattices, are particular examples of algebras in which the Nelson conucleus can be defined by a term, i.e., there is a definable term function $\tw$ such that for each ${\bf A}$ in the corresponding class the pair $({\bf A}, \tw)$ is in $\mathcal{NCA}$. Before analyzing that, we note that the theory we have developed is broader and admits algebras where the conucleus is not definable by a term.

\begin{ex}
    Consider the twist structure ${\bf \text{Tw}}(\text{\L}_3,0)$ shown in Figure \ref{23_element_twists} and in Figure \ref{notaterm} below. Observe that $\tw_{\text{Tw}}(a,0)=(a,a)$ so $({\bf \text{Tw}}(\text{\L}_3,0),\tw_{\text{Tw}})$ is in $\mathcal{NCA}.$  However $\text{Tw}(\text{\L}_3,0)\setminus\{(a,a)\}$ is the universe of a subalgebra of ${\bf \text{Tw}}(\text{\L}_3,0)$ (as a $\tw_{\text{Tw}}$-less reduct), and therefore $\tw_{\text{Tw}}$ cannot be a term function, since the element $\tw_{\text{Tw}}(a,0)$ cannot be obtained from operations applied to $(a,0)$ and the identity $(1,0)$.

    To show that it is a subalgebra, observe that it is a chain, therefore closed under $\wedge$ and $\vee$, and also it is closed under $\ln$. As the implication can be defined in terms of the product and the involution, it only remains to show that it is closed under the product, and we do this in Figure \ref{notaterm}.

    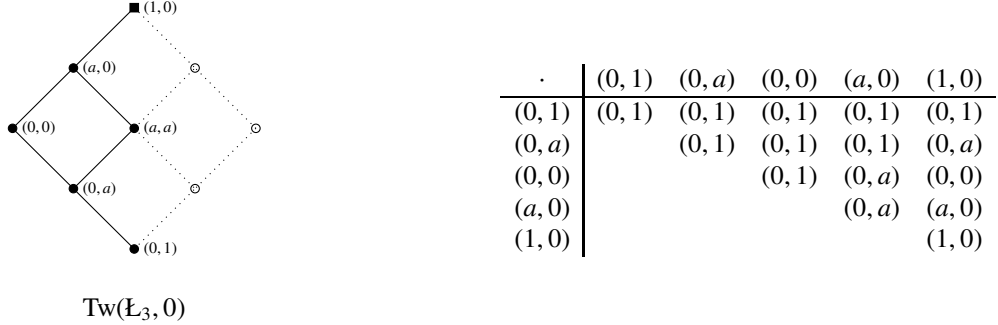
\begin{figure}[ht]
        \centering
        \begin{minipage}{0.4\textwidth}
            \begin{tikzpicture}[scale=0.8]
            \draw[black, fill=black] (0-6,-6+0) circle (0.07 cm);
            \draw[black, fill=black] (-2-6,-6+0) circle (0.07 cm);
            \draw[black] (2-6,-6+0) circle (0.07 cm);
            \draw[black, fill=black] (-1-6,-6+1) circle (0.07 cm);
            \draw[black] (1-6,-6+1) circle (0.07 cm);
            \draw[black, fill=black] (-1-6,-6+-1) circle (0.07 cm);
            \draw[black] (1-6,-6+-1) circle (0.07 cm);
            \draw[black, fill=black] (0-6,-6+2) +(-2pt,-2pt) rectangle +(2pt,2pt) ;
            \draw[black, fill=black] (0-6,-6+-2) circle (0.07 cm);
            \draw[black] (0-6,-6+0)--(-1-6,-6+1);
            \draw[dotted] (0-6,-6+0)--(1-6,-6+1);
            \draw[black] (0-6,-6+0)--(-1-6,-6+-1);
            \draw[dotted] (0-6,-6+0)--(1-6,-6+-1);
            \draw[black] (-2-6,-6+0)--(-1-6,-6+-1);
            \draw[black] (-2-6,-6+0)--(-1-6,-6+1);
            \draw[dotted] (2-6,-6+0)--(1-6,-6+-1);
            \draw[dotted] (2-6,-6+0)--(1-6,-6+1);
            \draw[black] (0-6,-6+-2)--(-1-6,-6+-1);
            \draw[dotted] (0-6,-6+-2)--(1-6,-6+-1);
            \draw[black] (0-6,-6+2)--(-1-6,-6+1);
            \draw[dotted] (0-6,-6+2)--(1-6,-6+1);
            \node[right] at (0-6,-6+0) {\tiny$(a,a)$};
            \node[right] at (-2-6,-6+0) {\tiny$(0,0)$};
            \node[right] at (-1-6,-6+1) {\tiny$(a,0)$};
            \node[right] at (-1-6,-6+-1) {\tiny$(0,a)$};
            \node[right] at (0-6,-6+2) {{\tiny$(1,0)$}};
            \node[right] at (0-6,-6+-2) {\tiny$(0,1)$};
            \node at (0-6,-6+-3) {$\text{Tw}(\text{\L}_3,0)$};
            \end{tikzpicture}
        \end{minipage}
        \begin{tabular}{c|ccccc}
            $\cdot$ &$(0,1)$& $(0,a)$ & $(0,0)$ & $(a,0)$ & $(1,0)$  \\
            \hline
            $(0,1)$ & $(0,1)$ & $(0,1)$ & $(0,1)$ & $(0,1)$ & $(0,1)$ \\
            $(0,a)$ &  & $(0,1)$ & $(0,1)$ & $(0,1)$ & $(0,a)$ \\
            $(0,0)$ &  &  & $(0,1)$ & $(0,a)$ & $(0,0)$ \\
            $(a,0)$ &  &  &  & $(0,a)$ & $(a,0)$ \\
            $(1,0)$ &  &  &  &  & $(1,0)$ \\
        \end{tabular}
        \caption{An example where the conucleus $\tw$ cannot be a term function.}
        \label{notaterm}
    \end{figure}
\end{ex}

\subsection{The defining terms and Kalman residuated lattices}

Note that all the motivating examples are of commutative involutive residuated lattices, and the residuated lattices ${\bf A}_\tw$ are integral. They also share some similarities with respect to the terms defining the conucleus. Two particular cases follow from the next results, the first one in Lemma \ref{Kalmanconucleus} and the second in Theorem \ref{Nelsoncases}.

\begin{lem}\label{lem:integralcase}\label{lem:3potency}
Let $({\bf A},\tw)\in \mathcal{NCA}$. Then  $(x\wedge e)^2\leq \tw(x)$ and if  $\tw(x)\leq e$ for all $x$, then
\begin{enumerate}
    \item $(x\wedge e)^2\leq \tw(x)\leq x\wedge e$.
    \item If $e \leq \ln e$, then $e=\ln e$ and $\tw(x)=x\wedge e$.
    \item If $\tw(x)^2=\tw(x)$, then $\tw(x)=(x\wedge e)^2$.
\end{enumerate}

\end{lem}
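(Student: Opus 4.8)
The plan is to prove the base inequality $(x\wedge e)^2\leq \tw(x)$ first and then bootstrap the three numbered claims from it. Setting $u=x\wedge e$, so that $u\leq e$, monotonicity (C\ref{Cmono}) together with $u\leq x$ reduces the task to showing $u^2\leq \tw(u)$, since then $u^2\leq\tw(u)\leq\tw(x)$. To establish $u^2\leq\tw(u)$ I would invoke the Nelson identity (T\ref{Tmulteq}), which gives $u\cdot u=\tw(u)u\vee u\tw(u)$; because $u\leq e$ and multiplication is order preserving, each disjunct collapses, $\tw(u)u\leq\tw(u)e=\tw(u)$ and $u\tw(u)\leq e\tw(u)=\tw(u)$, whence $u^2\leq\tw(u)$. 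This is the single place where the full strength of (T\ref{Tmulteq}), rather than mere monotonicity, is needed.

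Assuming now $\tw(x)\leq e$ for all $x$, claim (1) is immediate: the left inequality is the base inequality, and $\tw(x)\leq x\wedge e$ follows by combining $\tw(x)\leq x$ from (C\ref{Cord1}) with the hypothesis $\tw(x)\leq e$. Claim (3) drops out by squaring: from $\tw(x)\leq x\wedge e$ and monotonicity of $\cdot$ we get $\tw(x)^2\leq (x\wedge e)^2$, so when $\tw(x)$ is idempotent we have $\tw(x)=\tw(x)^2\leq (x\wedge e)^2$, and the reverse inequality is the base inequality once more.

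The substantive part is claim (2). I would first show $\tw(\ln e)=e$: the hypothesis $e\leq \ln e$ gives $e=\tw(e)\leq\tw(\ln e)$ by monotonicity (using $\tw(e)=e$), while integrality gives $\tw(\ln e)\leq e$. Feeding $x=e$, $y=\ln e$ into Lemma~\ref{quasiequation} (both $\tw(e)=\tw(\ln e)=e$ and $\tw(\ln e)=\tw(\ln\ln e)=e$ hold) then forces $e=\ln e$. With oddness in hand, the equality $\tw(x)=x\wedge e$ is obtained from a second application of Lemma~\ref{quasiequation} to the pair $\tw(x)$ and $x\wedge e$: on the $\tw$-coordinate, (C\ref{Cinf}) with $\tw(x)\leq e$ gives $\tw(x\wedge e)=\tw(\tw(x)\wedge e)=\tw(x)=\tw(\tw(x))$; on the $\tw\circ\ln$ coordinate, using $\ln e=e$ one computes $\tw(\ln(x\wedge e))=\tw(\ln x\vee e)=\tw(\ln x)\vee e=e$ by (T\ref{Tsup}) and integrality, and likewise $\tw(\ln\tw(x))=\tw(\tw(x)\ld e)=e$, since $\tw(x)\leq e$ forces $e\leq \tw(x)\ld e$ by residuation. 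As both coordinates agree, Lemma~\ref{quasiequation} yields $\tw(x)=x\wedge e$.

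The main obstacle is precisely the reverse inequality $x\wedge e\leq\tw(x)$ buried in claim (2): the base inequality only delivers $(x\wedge e)^2\leq\tw(x)$, and absent idempotency there is no direct passage from the square to $x\wedge e$. Routing through Lemma~\ref{quasiequation}, which reduces an equality in $\m A$ to equality of the two $\tw$-images, is what circumvents this; the only delicate inputs are the repeated uses of integrality $\tw(z)\leq e$ and oddness $\ln e=e$ to collapse the negated coordinates to $e$.
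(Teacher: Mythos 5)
Your proposal is correct, and for the base inequality $(x\wedge e)^2\leq \tw(x)$ and for parts (1) and (3) it coincides with the paper's proof: both apply (T\ref{Tmulteq}) to $u=x\wedge e$ and collapse the two disjuncts using $u\leq e$, then obtain (1) from (C\ref{Cord1}) plus the hypothesis and (3) by squaring $\tw(x)\leq x\wedge e$. The difference is in part (2). The paper works directly with the division form (T\ref{TNelson}) of the Nelson identity: it first computes $\ln e\ld e=(\tw(\ln e)\ld e)\wedge(e\rd\tw(\ln e))\geq e$ to get $\ln e\leq e$ (hence $e=\ln e$ under the extra hypothesis), and then computes $(x\wedge e)\ld\tw(x)\geq e$ by the same identity to conclude $x\wedge e\leq\tw(x)$. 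You instead route both equalities, $e=\ln e$ and $\tw(x)=x\wedge e$, through the injectivity statement of Lemma~\ref{quasiequation}, checking that the $\tw$-images and the $\tw\circ\ln$-images of the two sides agree; your verifications of those four images (via (C\ref{Cinf}), (T\ref{Tsup}), $\tw(e)=e$ and residuation) are all correct. Since Lemma~\ref{quasiequation} is itself derived from (T\ref{TNelson}), the two arguments are close cousins; the paper's is shorter and more direct, while yours is more systematic in that it reduces every equality to a uniform two-coordinate check, which is exactly the mechanism that makes the twist representation injective. One small remark: your derivation of $e=\ln e$ needs $\tw(\ln e)=e$, for which you correctly use both $e\leq\ln e$ and the integrality hypothesis $\tw(\ln e)\leq e$; this mirrors the paper's observation that integrality alone already forces $\ln e\leq e$.
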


\begin{proof}
Using (T\ref{Tmulteq}), we have
\begin{align*}(x\wedge e)^2 = \tw(x\wedge e)\cdot (x\wedge e) \vee (x\wedge e)\cdot\tw(x\wedge e) \leq \tw(x)e\vee e\tw(x)=\tw(x).\end{align*}
If $\tw(x)\leq e$ then $\tw(x)\leq x\mt e$, so
$(x\wedge e)^2  \leq \tw(x) \leq x \mt e.$ With this in mind, we only need to show one inequality for (2) and for (3).

Observe first that if $\tw(x)\leq e$ holds, then $\ln e\ld e = \tw(\ln e)\ld e \wedge e\rd\tw(\ln e)\geq e$, so $\ln e\leq e$.
If $e \leq \ln e$, then by the previous observation $e=\ln e$ and as $\tw(\ln\tw(x))\leq e$, by (T\ref{TNelson}) we get
\begin{align*} (x\wedge e)\ld\tw(x) &=\tw(x\wedge e)\ld\tw(x)\wedge\ln(x\wedge e)\rd \tw(\ln\tw(x))\\ &\geq  \tw(x)\ld\tw(x)\wedge(\ln x\vee \ln e)\rd e \geq e\wedge \ln e = e, \end{align*}
so $x\wedge e\leq \tw(x)$.
If $\tw(x)^2=\tw(x)$, then clearly $\tw(x)=\tw(x)^2\leq (x\wedge e)^2$.

\end{proof}

Observe that if $\tw(x)^2=\tw(x)$ and ${\bf A}$ is commutative (even without assuming $\tw(x)\leq e$), then  using equation (T\ref{Tmulteq}) we have that $x^2=\tw(x)x$, and so
\begin{align*}x^3=x^2\cdot x=\tw(x)xx= \tw(x)x^2= \tw(x)\tw(x)x = \tw(x)x=x^2.\end{align*} This means that if $\tw(x)^2=\tw(x)$ and ${\bf A}$ is commutative, then ${\bf A}$  is $3$-potent (it satisfies $x^3=x^2$).

\begin{lem}\label{Kalmanconucleus} Given a Kalman residuated lattice ${\bf A}$, the term function $\tw(x)=x\wedge e$ defines a Nelson conucleus on ${\bf A}$, and therefore $({\bf A}, \tw)\in \mathcal{NCA}$. Conversely, if $({\bf A}, \tw)\in \mathcal{NCA}$ satisfies commutativity, $\tw(x)\leq e$  and $e\leq \ln e$, then $\bf A$ is a Kalman residuated lattice.
\end{lem}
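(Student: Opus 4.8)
The statement has two directions. For the forward direction, assume $\m A$ is a Kalman residuated lattice, so it is a commutative (hence cyclic) involutive residuated lattice that is odd, with $\ln e = e$ and $\ln x = x \ra e$, and set $\tw(x) = x \mt e$. I would first check that $\tw$ is a conucleus: (C\ref{Cord1})--(C\ref{Cmono}) are immediate from the lattice structure, while (C\ref{Cprod}) and (C\ref{Cneutral}) follow from (K\ref{Kdist_ast_inf}), since $(x \mt e)(y \mt e) = (xy) \mt e = \tw(xy)$ and $\tw(e)\tw(x) = e(x \mt e) = x \mt e = \tw(x)$. Next, (T\ref{Tsup}) is exactly (K\ref{Kquasidist}) and (T\ref{Tprod}) is exactly (K\ref{Kdist_ast_inf}). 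It then remains to produce the multiplicative law; by Lemma \ref{lem:divisions} together with the characterization of Nelson conuclei through (T\ref{Tmulteq}), it suffices to verify the first identity of (T\ref{TNelson}).

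The crux is to recognize that in this setting (T\ref{TNelson}) is literally (K\ref{Kquasi_nelson}). Indeed, $\tw(x) = x \mt e$ and $\tw(\ln y) = \ln y \mt e$, and in the commutative case the first identity of (T\ref{TNelson}) reads $x \ra y = ((x \mt e) \ra y) \mt (\tw(\ln y) \ra \ln x)$. Applying contraposition, $\tw(\ln y) \ra \ln x = x \ra \ln(\ln y \mt e)$, and by the De Morgan law of Lemma \ref{lem:propiedades_involucion}(3) together with $\ln e = e$ one gets $\ln(\ln y \mt e) = y \jn e$. Hence the second conjunct equals $x \ra (y \jn e)$, and (T\ref{TNelson}) becomes $((x \mt e) \ra y) \mt (x \ra (y \jn e)) = x \ra y$, which is precisely (K\ref{Kquasi_nelson}). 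Thus $\tw$ is a Nelson conucleus and $(\m A, \tw) \in \mathcal{NCA}$.

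For the converse, assume $(\m A, \tw) \in \mathcal{NCA}$ is commutative with $\tw(x) \le e$ and $e \le \ln e$. Lemma \ref{lem:3potency}(2) immediately yields $e = \ln e$, so $\m A$ is odd, and $\tw(x) = x \mt e$. Rather than verifying the Kalman equations one by one (which would also require the redundant (K\ref{KquasidistUnnecessary})), I would appeal to the representation theorem. Since $\tw(x) \le e$ for all $x$, the residuated lattice $\m A_\tw$ is integral and commutative with greatest element $e$, and $\imath = \tw(\ln e) = \tw(e) = e$ is its top. Consequently ${\bf Tw}(\m A_\tw, \imath) = {\bf Tw}(\m A_\tw, 1)$ is a Kalman residuated lattice, as recorded for integral commutative bases. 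By Theorem \ref{Teorema_representacion} the map $\phi$ embeds $\m A$, as a commutative residuated lattice, into this twist-product. Since Kalman residuated lattices form an equational class and are therefore closed under subalgebras, $\m A$ is itself a Kalman residuated lattice.

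The main obstacle is the bookkeeping in the equivalence between (T\ref{TNelson}) and (K\ref{Kquasi_nelson}): one must apply contraposition and the De Morgan laws in exactly the right order and use oddness ($\ln e = e$) to rewrite $\ln(\ln y \mt e)$ as $y \jn e$. Everything else is a direct dictionary between the Kalman axioms and the conucleus and Nelson-conucleus axioms, namely (K\ref{Kdist_ast_inf}) $\leftrightarrow$ (T\ref{Tprod}) and (K\ref{Kquasidist}) $\leftrightarrow$ (T\ref{Tsup}). The representation route in the converse is what lets us avoid proving (K\ref{KquasidistUnnecessary}) directly, obtaining it instead for free from the embedding into a known Kalman lattice.
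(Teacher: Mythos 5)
Your proof is correct, and the forward direction is essentially the paper's: (C\ref{Cord1})--(C\ref{Cneutral}) from the lattice structure and (K\ref{Kdist_ast_inf}), the identifications (T\ref{Tsup}) $=$ (K\ref{Kquasidist}) and (T\ref{Tprod}) $=$ (K\ref{Kdist_ast_inf}), and (T\ref{Tmulteq}) obtained through Lemma \ref{lem:divisions} from (K\ref{Kquasi_nelson}); your explicit unwinding of the contraposition between the two forms of (K\ref{Kquasi_nelson}) only spells out what the paper leaves implicit. The converse is where you genuinely diverge. The paper stays internal: after using Lemma \ref{lem:integralcase} to get $e=\ln e$ and $\tw(x)=x\wedge e$ (so that (K\ref{Kinvo})--(K\ref{Kquasidist}) are direct translations of the $\mathcal{NCA}$ axioms), it verifies the one remaining axiom (K\ref{KquasidistUnnecessary}) by computing $\tw$ of both sides and of both negations and invoking the injectivity criterion of Lemma \ref{quasiequation}. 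You instead route through Theorem \ref{Teorema_representacion}: since $\m A_\tw$ is integral and commutative with $\imath=\tw(\ln e)=e$ its top, $\m A$ embeds into ${\bf Tw}(\m A_\tw,1)$, which is a Kalman lattice, and the variety of Kalman lattices is closed under subalgebras. This is valid and arguably slicker---it yields (K\ref{KquasidistUnnecessary}) for free---but it leans on two heavier ingredients: the full representation theorem, and the fact that twist products over integral commutative residuated lattices are Kalman lattices, which the paper only records in its motivating example with a citation to \cite{BusCig03}. The paper's direct computation via Lemma \ref{quasiequation} is self-contained and is precisely what exhibits the announced redundancy of (K\ref{KquasidistUnnecessary}); both routes establish that redundancy, but yours outsources part of the verification to the earlier literature.
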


\begin{proof}
    First assume that ${\bf A}$ is a Kalman residuated lattice and we set  $\tw(x)=x\wedge e$. It is immediate that equations (C\ref{Cord1}), (C\ref{Cidem}) and (C\ref{Cmono}) hold in ${\bf A}$. (C\ref{Cprod}) will follow from (T\ref{Tprod}) and (C\ref{Cneutral}) comes from the fact that $\tw(e)=e\wedge e=e.$ Finally, (T\ref{Tsup}) is (K\ref{Kquasidist}), (T\ref{Tprod}) is (K\ref{Kdist_ast_inf}) and (T\ref{Tmulteq}) follows from commutativity, (K\ref{Kquasi_nelson}) and Lemma \ref{lem:divisions}.

        Now, if $({\bf A}, \tw)\in \mathcal{NCA}$ satisfies commutativity, $\tw(x)\leq e$ and $ e\le \ln e$. Then  by Lemma \ref{lem:integralcase} we have $e=\ln e$ and $\tw(x)=x\wedge e$, and the only equation left to verify that $\bf A$ is a Kalman lattice is the fact that $x \land (y \lor e) = (x \land y) \lor (x \land e)$. As $\tw(x)=x\wedge e$ is a Nelson conucleus,
        \begin{align*}(x \land (y \lor e))\land e &= x \land ((y\lor e)\land e)) = x\land e\\
            ((x \land y) \lor (x \land e))\land e   &= (x\land y \land e)\lor(x\land e) = x\land e\end{align*}
        and
        \begin{align*}\ln(x \land (y \lor e))\land e &= (\ln x\land e) \lor ((\ln y\land e)\land e) = (\ln x\lor \ln y)\land e\\
            \ln((x \land y) \lor (x \land e))\land e    &= ((\ln x\lor \ln y) \land e)\land((\ln x\lor e)\land e) = (\ln x\lor \ln y) \land e\end{align*}
    so by Lemma \ref{quasiequation} we have the equality.
\end{proof}

\subsection{Nelson-type algebras}

In this section we will construct a variety of algebras $\bf A$ with a definable term function $\tw$ such that $({\bf A}, \tw)$ is in $\mathcal{NCA}$. This variety will encompass both Nelson residuated lattices and Nelson paraconsistent lattices.
To motivate our definition, we first rewrite what we need from Lemma \ref{lem:integralcase}.

\begin{lem}If $({\bf A}, \tw)\in \mathcal{NCA}$ satisfies $\tw(x)\leq e$ and $\tw(x)^2=\tw(x)$, then $\tw(x)=(x\wedge e)^2$ and ${\bf A}$ is a residuated lattice satisfying:
\begin{enumerate}[{(N}1{)}]
    \item\label{eq:nelsonnt} $xy=(x\wedge e)^2 y \jn x (y\wedge e)^2$,
    \item\label{eq:producto_cong} $(x y\wedge e)^2=(x\wedge e)^2(y\wedge e)^2$.
 \end{enumerate}
Moreover, ${\bf A}_\tw$ is an integral residuated lattice where $\cdot$ and $ \wedge_\tw$ coincide, i.e. it is a Brouwerian algebra.
Finally, ${\bf A}$ is commutative, distributive, and satisfies $((x \jn y)\wedge e)^2=(x\wedge e)^2 \jn (y\wedge e)^2$.
\end{lem}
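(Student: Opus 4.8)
The plan is to treat the first three assertions as one-line rewritings of the Nelson-conucleus axioms once the shape of $\tw$ is pinned down, to get the Brouwerian structure of ${\bf A}_\tw$ from idempotency together with integrality, and to obtain commutativity and distributivity of ${\bf A}$ itself from the representation theorem.

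First I would invoke Lemma~\ref{lem:3potency}(3): since $\tw(x)\le e$ and $\tw(x)^2=\tw(x)$ hold by hypothesis, that part of the lemma immediately gives $\tw(x)=(x\mt e)^2$. With this term description in hand, (N\ref{eq:nelsonnt}) is nothing but (T\ref{Tmulteq}), namely $xy=\tw(x)\,y\jn x\,\tw(y)$, after substituting $\tw(x)=(x\mt e)^2$ and $\tw(y)=(y\mt e)^2$; likewise (N\ref{eq:producto_cong}) is exactly (T\ref{Tprod}), $\tw(xy)=\tw(x)\tw(y)$, rewritten as $(xy\mt e)^2=(x\mt e)^2(y\mt e)^2$. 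The final displayed identity $((x\jn y)\mt e)^2=(x\mt e)^2\jn(y\mt e)^2$ is just (T\ref{Tsup}), $\tw(x\jn y)=\tw(x)\jn\tw(y)$, under the same substitution. So these three equations cost nothing beyond the term for $\tw$.

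Next I would show that ${\bf A}_\tw$ is a Brouwerian algebra. Its identity is $\tw(e)=e$, and every element has the form $\tw(x)\le e$, so ${\bf A}_\tw$ is integral. To see that $\cdot$ agrees with $\mt_\tw$ on $\tw[A]$, fix $a,b\in\tw[A]$, so $a=\tw(a)=a^2\le e$ and $b=\tw(b)=b^2\le e$. Integrality gives $ab\le eb=b$ and $ab\le ae=a$, hence $ab\le a\mt b$; since $ab\in\tw[A]$ this forces $ab=\tw(ab)\le\tw(a\mt b)=a\mt_\tw b$. Conversely $\tw(a\mt b)\le a\mt b$ yields $\tw(a\mt b)\le a,b$, and as $\tw(a\mt b)$ is idempotent we get $\tw(a\mt b)=\tw(a\mt b)^2\le ab$. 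Thus $ab=a\mt_\tw b$, so ${\bf A}_\tw$ is an integral residuated lattice whose monoid operation is the meet, i.e.\ a Brouwerian algebra (in particular commutative, with distributive lattice reduct).

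Finally, commutativity and distributivity of ${\bf A}$ I would deduce from the representation. By Theorem~\ref{Teorema_representacion}, ${\bf A}$ embeds into ${\bf Tw}({\bf A}_\tw,\imath)$ with $\imath=\tw(\ln e)$. Since ${\bf A}_\tw$ is Brouwerian, hence commutative, the product formula~(\ref{eq:ast}) collapses to $(a,b)\cdot(a',b')=(aa',(a\to b')\mt(a'\to b))$, which is symmetric in the two factors; thus ${\bf Tw}({\bf A}_\tw,\imath)$ is commutative, and so is its subalgebra ${\bf A}$. For distributivity, the lattice reduct of ${\bf Tw}({\bf A}_\tw,\imath)$ is a sublattice of ${\bf A}_\tw\times{\bf A}_\tw^\partial$, a product of distributive lattices, hence distributive, and ${\bf A}$ inherits distributivity. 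I expect the only genuinely non-mechanical step to be commutativity of ${\bf A}$: unlike (N1)--(N2) it does not fall out of a single axiom, and the clean route is to push it through the twist embedding after first establishing that ${\bf A}_\tw$ is Brouwerian, so that the base of the twist is commutative; a direct attempt to symmetrize $(x\mt e)^2 y\jn x(y\mt e)^2$ would be comparatively awkward.
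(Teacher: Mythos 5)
Your proposal is correct and follows essentially the same route as the paper: the term for $\tw$ and equations (N1)--(N2) come from Lemma~\ref{lem:3potency} and the Nelson-conucleus axioms (T1), (T2), (T4); the Brouwerian structure of ${\bf A}_\tw$ comes from integrality plus idempotency (your two-sided verification of $ab=a\mt_\tw b$ is just an unpacking of the paper's one-line chain $\tw(x)\mt_\tw\tw(y)=\tw(x\mt y)=\tw(x\mt y)^2\le\tw(x)\tw(y)\le\tw(x)\mt_\tw\tw(y)$); and commutativity and distributivity of ${\bf A}$ are pulled back through the twist embedding of Theorem~\ref{Teorema_representacion}, exactly as the paper does.
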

\begin{proof}The first facts follow from Lemma \ref{lem:integralcase}. Note that because of integrality of ${\bf A}_\tw$,
$$\tw(x)\wedge_\tw \tw(y)=\tw(x\wedge y)=\tw(x\wedge y)^2\leq \tw(x)\tw(y)\leq \tw(x)\wedge_\tw \tw(y),$$
so ${\bf A}_\tw$ is  a Brouwerian algebra.
Therefore, ${\bf A}_\tw$ is commutative and distributive and by Theorem \ref{Teorema_representacion} $\bf A$ will also satisfy commutativity and distributivity, as these properties are inherited by the twist-product.
\end{proof}

With this in mind, we define the variety $\mathcal{NT}$ of \textbf{Nelson-type algebras}, as the variety of commutative, distributive, involutive residuated lattices satisfying equations (N\ref{eq:nelsonnt})-(N\ref{eq:producto_cong}). We show below that $\mathcal{NT}$ is (term equivalent to) a subvariety of $\mathcal{NCA}$. Considering the term  $\tw(x)=(x\wedge e)^2$, these axioms can be written as:
\begin{enumerate}[{(N'}1{)}]
    \item\label{eq:nelsonnt'}
    $xy=\tw(x)y \jn x \tw(y),$
    \item\label{eq:producto_cong'}$
    \tw(xy)=\tw(x)\tw(y).$
\end{enumerate}

As an immediate consequence of the observation below Lemma~\ref{lem:3potency} we get that for each $n\in \mathbb{N}$, $n\ge 2$ the equation $x^n=x^2$ holds in $\mathcal{NT}.$

\begin{lem}If ${\bf A}\in \mathcal{NT}$, then $\tw(x)=(x\wedge e)^2$ is a Nelson conucleus.
\end{lem}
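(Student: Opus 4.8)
The plan is to verify directly that $\tw(x)=(x\wedge e)^2$ is a conucleus that moreover satisfies (T1), (T2) and (T3); by the very definition of $\mathcal{NT}$ the two most substantial identities come for free. Indeed, (T2) is literally (N'2) and the equality form (T4) of (T3) is literally (N'1), so by the lemma characterizing Nelson conuclei (a conucleus satisfying (T1) and (T2) is Nelson iff (T4)) it will remain only to check that $\tw$ is a conucleus and that it preserves binary joins, i.e. (T1).

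For the conucleus axioms, (C1), (C3) and (C5) are routine: $(x\wedge e)^2\le (x\wedge e)e=x\wedge e\le x$ gives (C1); monotonicity of $\wedge$ and $\cdot$ gives (C3); $\tw(e)=e^2=e$ gives (C5) at once; and (C4) is the inequality already contained in (N'2). The one axiom needing an argument is idempotency (C2). Here I would first record that $\tw(x)=(x\wedge e)^2\le (x\wedge e)e=x\wedge e\le e$, so that $\tw(\tw(x))=\tw(x)^2=(x\wedge e)^4$, and then use that $x^n=x^2$ holds in $\mathcal{NT}$ for all $n\ge 2$ (the consequence of the observation following Lemma~\ref{lem:3potency} noted just above); applied to $x\wedge e$ this yields $(x\wedge e)^4=(x\wedge e)^2=\tw(x)$. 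If a self-contained derivation of idempotency is preferred, (N'1) together with commutativity gives $x^2=\tw(x)x=(x\wedge e)^2x$, and specializing to $z:=x\wedge e\le e$ (so that $\tw(z)=z^2$) produces $z^2=z^3$, whence $z^4=z^2$.

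The remaining point is (T1), namely $\tw(x\vee y)=\tw(x)\vee\tw(y)$. Writing $u=x\wedge e$ and $v=y\wedge e$, distributivity gives $(x\vee y)\wedge e=u\vee v$, and since multiplication distributes over joins in any residuated lattice, $\tw(x\vee y)=(u\vee v)^2=u^2\vee uv\vee vu\vee v^2$. The key step is to control the cross terms: applying (N'1) to the \emph{product} $uv$ gives $uv=\tw(u)v\vee u\tw(v)=u^2v\vee uv^2$, and since $u,v\le e$ we have $u^2v\le u^2$ and $uv^2\le v^2$, so $uv\le u^2\vee v^2$; by commutativity $vu=uv$ as well. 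Hence $(u\vee v)^2=u^2\vee v^2=\tw(x)\vee\tw(y)$. The reverse inequality $\tw(x)\vee\tw(y)\le\tw(x\vee y)$ is immediate from monotonicity (C3).

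I expect the two genuine steps to be the idempotency of $\tw$ — which is really the fact that elements below $e$ are $3$-potent in $\mathcal{NT}$ — and the bound $uv\le u^2\vee v^2$ underlying (T1), whose trick is to expand $uv$ itself via (N'1) rather than to estimate it crudely through $uv\le u\wedge v$. Everything else is bookkeeping with the standard residuated-lattice identities, together with the fact that (N'1) and (N'2) \emph{are} the identities (T4) and (T2).
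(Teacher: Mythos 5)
Your proposal is correct and follows essentially the same route as the paper: idempotency of $\tw$ comes from $z^2=z^3$ for $z\le e$ via (N\ref{eq:nelsonnt}), the remaining conucleus axioms are routine, (T\ref{Tprod}) and (T\ref{Tmulteq}) are the defining identities, and (T\ref{Tsup}) reduces to absorbing the cross terms $u^2v$ and $uv^2$ into $u^2\vee v^2$. The only cosmetic difference is that you expand $(u\vee v)^2$ and apply (N\ref{eq:nelsonnt}) to the cross term $uv$, while the paper first rewrites the square as $(u\vee v)^3$ and expands that; both hinge on the same absorption step.
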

\begin{proof}
    Trivially  $\tw$ satisfies (C\ref{Cord1}) and (C\ref{Cmono}). Equation  (C\ref{Cidem}), that is, $((x\wedge e)^2\wedge e)^2 = (x\wedge e)^2$ is immediate from Lemma \ref{lem:3potency}, as $(x\wedge e)^2\leq e$ and $(x\wedge e)^4=(x\wedge e)^3=(x\wedge e)^2$ from (N\ref{eq:nelsonnt}). (C\ref{Cprod}) will follow from (N\ref{eq:producto_cong}) and  (C\ref{Cneutral}) follows by observing that $(e\wedge e)^2=e$. To prove  (T\ref{Tsup}), we have to show that
    $(x\wedge e)^2\vee(y\wedge e)^2 = ((x\vee y)\wedge e)^2$
    hold. Since $\cdot$ and $\wedge$ distribute over $\vee$, we have that
    \begin{align*}
            ((x\vee y)\wedge e)^2 &= ((x\vee y)\wedge e)^3\\ &= (x\wedge e)^3\vee (x\wedge e)^2(y\wedge e)\vee (x\wedge e)(y\wedge e)^2\vee (y\wedge e)^3\\&= (x\wedge e)^2\vee (y\wedge e)^2.\end{align*}
    (T\ref{Tprod}) is (N\ref{eq:producto_cong}) and (T\ref{Tmulteq}) follows from (N\ref{eq:nelsonnt}).
\end{proof}

We conclude:

\begin{thm}\label{Nelsoncases} If ${\bf A}\in \mathcal{NT}$, then $\tw(x)=(x\wedge e)^2$ is a Nelson conucleus and $({\bf A}, \tw)\in \mathcal{NCA}$. Conversely, if $({\bf A}, \tw)\in \mathcal{NCA}$ satisfies $\tw(x)^2=\tw(x)\leq e$, then ${\bf A}\in \mathcal{NT}$.\end{thm}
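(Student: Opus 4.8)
The plan is to read off both implications from the two Lemmas immediately preceding the statement, supplemented only by matching them against the definitions of $\mathcal{NCA}$ and $\mathcal{NT}$; no fresh computation should be needed, since the substantive work has already been done in those Lemmas.

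For the forward direction I would argue as follows. By definition a member ${\bf A}$ of $\mathcal{NT}$ is a commutative, distributive, involutive residuated lattice; commutativity forces the involution to be cyclic (as observed in the preliminaries), so ${\bf A}$ is in particular a cyclic involutive residuated lattice. The Lemma immediately above the statement shows that the term function $\tw(x)=(x\wedge e)^2$ is a Nelson conucleus on such an ${\bf A}$. Hence the pair $({\bf A},\tw)$ satisfies exactly the two clauses of the definition of $\mathcal{NCA}$ --- a cyclic involutive residuated lattice equipped with a Nelson conucleus --- and therefore $({\bf A},\tw)\in\mathcal{NCA}$.

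For the converse I would invoke the first Lemma of this subsection, the one establishing (N\ref{eq:nelsonnt})--(N\ref{eq:producto_cong}). Assume $({\bf A},\tw)\in\mathcal{NCA}$ with $\tw(x)^2=\tw(x)\leq e$. As both hypotheses $\tw(x)\leq e$ and $\tw(x)^2=\tw(x)$ are in force, that Lemma delivers at once the identity $\tw(x)=(x\wedge e)^2$, the equations (N\ref{eq:nelsonnt}) and (N\ref{eq:producto_cong}) for ${\bf A}$, and the commutativity and distributivity of ${\bf A}$. Being a member of $\mathcal{NCA}$, ${\bf A}$ is moreover a cyclic involutive residuated lattice. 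Collecting these facts against the definition of $\mathcal{NT}$ --- a commutative, distributive, involutive residuated lattice satisfying (N\ref{eq:nelsonnt})--(N\ref{eq:producto_cong}) --- we conclude ${\bf A}\in\mathcal{NT}$.

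I do not expect a genuine obstacle here. The only point worth flagging is the consistency of the two presentations of the conucleus: the converse begins with an abstract, possibly non-term-definable $\tw$, while $\mathcal{NT}$ is defined using the fixed term $(x\wedge e)^2$. The identity $\tw(x)=(x\wedge e)^2$ furnished by the cited Lemma is precisely what certifies that the abstract $\tw$ we started with coincides with this term function. This coincidence is exactly what allows the two implications to be packaged into the announced term equivalence of $\mathcal{NT}$ with a subvariety of $\mathcal{NCA}$.
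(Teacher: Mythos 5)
Your proposal is correct and matches the paper exactly: the theorem is stated there with no separate proof precisely because it is the conjunction of the two preceding lemmas, the forward direction coming from the lemma that $\tw(x)=(x\wedge e)^2$ is a Nelson conucleus on any $\mathcal{NT}$-algebra and the converse from the lemma establishing $\tw(x)=(x\wedge e)^2$, (N\ref{eq:nelsonnt})--(N\ref{eq:producto_cong}), commutativity and distributivity under the hypotheses $\tw(x)^2=\tw(x)\leq e$. Your remark reconciling the abstract conucleus with the fixed term is the right (and only) point needing attention, and it is settled by that same lemma.
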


As we are in the commutative setting, we set $x\to y = x\ld y = y\rd x$ and $x\Rightarrow y = x\dld y = y\drd x$.
The following result shows that Nelson-type algebras generalize both Nelson residuated lattices and Nelson paraconsistent residuated lattices.

\begin{lem} We have that:
\begin{enumerate}
    \item The variety of Nelson residuated lattices is term equivalent to the subvariety of integral Nelson-type algebras.
        \item The variety of Nelson paraconsistent residuated lattices is term equivalent to the subvariety of odd Nelson-type algebras.
\end{enumerate}
\end{lem}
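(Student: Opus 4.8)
The plan is to establish each term equivalence by fixing the translations between the two signatures, and then reading off the defining equations of one class as instances of the Nelson-conucleus machinery of the other, with Theorem~\ref{Nelsoncases} doing the heavy lifting in both directions. For part (1), the signatures are $(\vee,\wedge,\cdot,\to,\bot,e)$ for Nelson residuated lattices and $(\vee,\wedge,\cdot,\to,\ln,e)$ for integral Nelson-type algebras; I set $\ln x:=x\to\bot$ in one direction and $\bot:=\ln e$ in the other, and note these are mutually inverse since $\ln e=e\to\bot=\bot$ by integrality and $x\to\ln e=x\ld f=\ln x$ always. For part (2), oddness makes $\ln x=x\to e$ term-definable, so the NPc-signature $(\vee,\wedge,\cdot,\to,e)$ and the odd Nelson-type signature coincide once $\ln$ is eliminated; here the translation is essentially trivial and all the work is in matching the axioms.

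For the Nelson residuated lattice direction of (1), start from such a lattice, put $\ln x=\neg x=x\to\bot$ (so (NRL1) is the involutivity $\ln\ln x=x$, and the algebra is a commutative, hence cyclic, involutive integral residuated lattice), and set $\tw(x)=x^2$, which equals $(x\wedge e)^2$ by integrality. The key observation is that (NRL2) is exactly the Nelson identity (T\ref{TNelson}) for this $\tw$, since $(\neg y)^2=\tw(\ln y)$ and $(\neg y)^2\to\neg x=\tw(\ln y)\to\ln x$. Because the equivalence of (T\ref{TNelson}) and (T\ref{Tmulteq}) in the proof of Lemma~\ref{lem:divisions} uses only the involutive structure (De Morgan, the formula $\ln(ab)=b\to\ln a$, and contraposition) and not the conucleus axioms, I may apply it here to obtain (N\ref{eq:nelsonnt}): $xy=x^2y\vee xy^2$. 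Setting $y=x$ gives $x^2=x^3$, hence $x^4=x^2$, so $\tw$ is idempotent; the remaining conucleus conditions are immediate, commutativity gives $\tw(xy)=\tw(x)\tw(y)$, and (N\ref{eq:nelsonnt}) together with integrality gives $xy\le x^2\vee y^2$ and hence $(x\vee y)^2=x^2\vee y^2$. Thus $({\bf A},\tw)\in\mathcal{NCA}$ with $\tw(x)^2=\tw(x)\le e$, and Theorem~\ref{Nelsoncases} yields ${\bf A}\in\mathcal{NT}$, including the distributivity that is not postulated for Nelson residuated lattices. The converse is immediate: for an integral Nelson-type algebra, Theorem~\ref{Nelsoncases} makes $\tw(x)=x^2$ a Nelson conucleus, $\bot:=\ln e$ is the bottom because $\ln$ reverses the order and $e$ is the top, (NRL1) is double negation, and (NRL2) is again (T\ref{TNelson}), valid since the pair lies in $\mathcal{NCA}$.

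Part (2) is cleaner because idempotency is free. For an NPc-lattice set $\tw(x)=x\wedge e$; by (NPc3) this equals $(x\wedge e)^2$ and is idempotent, (NPc2) gives $\tw(xy)=\tw(x)\tw(y)$, distributivity gives (T\ref{Tsup}), and the remaining conucleus conditions are immediate, so $\tw$ is a conucleus. Now Lemma~\ref{lem:divisions} applies directly, and since (NPc4) is precisely (T\ref{TNelson}) for $\tw(x)=x\wedge e$ — using contraposition and De Morgan to rewrite $(\ln y\wedge e)\to\ln x$ as $x\to(y\vee e)$ — I obtain (T\ref{Tmulteq}), so $\tw$ is a Nelson conucleus and $({\bf A},\tw)\in\mathcal{NCA}$; Theorem~\ref{Nelsoncases} then gives ${\bf A}\in\mathcal{NT}$, which is odd. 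Conversely, for an odd Nelson-type algebra $e\le\ln e$ and $\tw(x)=(x\wedge e)^2\le e$, so Lemma~\ref{lem:integralcase}(2) forces $e=\ln e$ and $\tw(x)=x\wedge e$; reading (N\ref{eq:producto_cong}) through $\tw(x)=x\wedge e$ gives (NPc2), the idempotency gives (NPc3), (NPc1) is double negation, and (NPc4) is the rewritten (T\ref{TNelson}).

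The main obstacle is the circularity lurking in the forward direction of (1): to invoke Theorem~\ref{Nelsoncases} I need $\tw(x)=x^2$ to be a conucleus, which requires idempotency $x^4=x^2$, yet the only evident source of idempotency is (N\ref{eq:nelsonnt}) itself, which I am trying to derive. The resolution is to note that the passage between (T\ref{TNelson}) and (T\ref{Tmulteq}) does not use the conucleus axioms, so (N\ref{eq:nelsonnt}) can be extracted from (NRL2) first and idempotency bootstrapped from it. The only other points requiring care are purely bookkeeping: checking that the two signatures' translations are genuinely inverse, and carrying out the contraposition and De Morgan rewritings that identify (NRL2) and (NPc4) with instances of (T\ref{TNelson}) and (N\ref{eq:producto_cong}) with (NPc2).
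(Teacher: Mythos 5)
Your proof is correct and follows the same skeleton as the paper's: fix the translations $\ln x = x\to\bot$, $\bot=\ln e$ (resp.\ $\ln x=x\to e$ in the odd case), recognize (NRL2) and (NPc4) as instances of (T\ref{TNelson}) for $\tw(x)=x^2$ and $\tw(x)=x\wedge e$ respectively, and pass between (T\ref{TNelson}) and (T\ref{Tmulteq}) by the De Morgan computation. The differences are organizational but worth recording. Where the paper extracts (N\ref{eq:nelsonnt}) from (NRL2) by the explicit chain $xy=\neg\neg(xy)=\neg(x\to\neg y)=\cdots=x^2y\vee y^2x$, you instead invoke Lemma~\ref{lem:divisions} after correctly observing that its proof of the equivalence (T\ref{Tmulteq})$\Leftrightarrow$(T\ref{TNelson}) uses only the involutive identities and never the conucleus axioms; this is the same computation packaged as a reusable fact, and it is also how you legitimately defuse the apparent circularity of needing idempotency of $x\mapsto x^2$ before (N\ref{eq:nelsonnt}) is available. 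More substantively, in part (1) the paper simply asserts that a Nelson residuated lattice is distributive, which is true but is not among the postulates (NRL1)--(NRL2) and is left unjustified; you actually derive distributivity by first checking that $\tw(x)=x^2$ is a Nelson conucleus and then applying Theorem~\ref{Nelsoncases}, whose proof obtains distributivity from the embedding of ${\bf A}$ into the twist product over the Brouwerian algebra ${\bf A}_\tw$. This makes the argument more self-contained, at the cost of routing through the representation theorem. Finally, for (NPc3) in part (2) you reuse Lemma~\ref{lem:integralcase}(2), whereas the paper redoes essentially the same inequality $(x\wedge e)\to(x\wedge e)^2\geq e$ inline; both are valid.
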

\begin{proof}
1.
We will show that if  ${\bf A}=(A, \vee, \wedge, \cdot, \to, \bot, e)$ is a Nelson residuated lattice, after setting $\neg x=x\rightarrow \bot$, the algebra  ${\bf A}=(A, \vee, \wedge, \cdot, \to, \neg, e)$ is in $\mathcal{NT}$  and it is integral.  The definition of Nelson residuated lattices imply that  ${\bf A}=(A, \vee, \wedge, \cdot, \to, \neg, e)$ is commutative, distributive, involutive (NRL1), and integral. Integrality trivializes (N\ref{eq:producto_cong}). For (N\ref{eq:nelsonnt}) we observe that from (NRL1) and (NRL2), 
\begin{align*}xy=\neg\neg (xy) = \neg (x\to \neg y) = \neg(x^2\to \neg y)\vee\neg(y^2\to \neg x)= x^2y \vee y^2x.\end{align*}

\smallskip

Assume now that ${\bf A}=(A, \vee, \wedge, \cdot, \to, \ln, e)$ is in $\mathcal{NT}$ and satisfies integrality. If we set $\bot=\ln e,$ then $$\ln e\vee x=\ln (e\wedge \ln x)=\ln\ln x=x$$ and therefore $\bot$ is the lowest element in $A$. Besides $\ln x=x\to \ln e=x\to \bot$ thus $\neg x=\ln x$ and (NRL1) holds in ${\bf A}$. Therefore ${\bf A}=(A, \vee, \wedge, \cdot, \to, \bot, e)$ is an involutive residuated lattice in the sense of \cite{BusCig00}. Equation (NRL2) follows from integrality and (N\ref{eq:nelsonnt}).

 2. Let ${\bf A}=(A, \vee, \wedge, \cdot, \to, e)$ be an NPc-lattice. From (NPc3)  the term function $\tw(x)=(x\wedge e)^2$ becomes $\tw(x)=x\wedge e$ and clearly  ${\bf A}=(A, \vee, \wedge, \cdot, \to, \ln, e)$ is in $\mathcal{NT}.$

Now consider an algebra  ${\bf A}=(A, \vee, \wedge, \cdot, \to, \ln, e)\in \mathcal{NT}$ satisfying  $e=\ln e$. Then $\ln x=x\to \ln e=x\to e$ and ${\bf A}=(A, \vee, \wedge, \cdot, \to, e)$ is a lattice with involution. As (NPc1) is required we need to check that the other three equations hold in ${\bf A}.$

Equations (NPc2) and (NPc4) are equivalent to (N\ref{eq:producto_cong}) and (N\ref{eq:nelsonnt}) under (NPc3). So it only remains to show that equation (NPc3) follows from (N\ref{eq:nelsonnt})-(N\ref{eq:producto_cong}) and (NPc1). Observe that $(x\wedge e)^2\leq x\wedge e$ always holds, for the other inequality, by (N\ref{eq:nelsonnt}) and $\ln e=e$,
                \begin{align*} (x\wedge e)\to(x\wedge e)^2 &= ((x\wedge e)^2\to (x\wedge e)^2)\wedge ((\ln (x\wedge e)^2\wedge e)^2\to \ln (x\wedge e))\\
                                                                                                     &\geq e\wedge (e\to \ln (x\wedge e)) = e\wedge (\ln x\vee e)= e,\end{align*}
            so $x\wedge e\leq (x\wedge e)^2$.
\end{proof}

Observe that there are algebras in $\mathcal{NT}$ which are neither equivalent to Nelson residuated lattices nor to Nelson paraconsistent residuated lattices. For example, this is the case of ${\bf Tw}({\bf G}_3,a)$ in Figure \ref{3_element_twists}. It is immediate to check that (N\ref{eq:nelsonnt}) and (N\ref{eq:producto_cong}) hold in this algebra but $e=(1, a)\neq (a,1)=\sim e$ and $e=(1,a)<(1,0).$

We can  axiomatize the join of Nelson residuated lattices and NPc-lattices  as the variety $\mathcal{NT}_0$ of Nelson-type algebras satisfying the condition
\begin{enumerate}[{(N}1{)}]
    \setcounter{enumi}{2}
    \item\label{eq:oddintegral} $e\leq \ln e\vee (\ln e\to x)$.
 \end{enumerate}

\begin{lem} The variety $\mathcal{NT}_0$ is the variety generated by integral and odd Nelson-type algebras.\end{lem}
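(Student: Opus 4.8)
The plan is to prove the two inclusions separately, the containment $\mathcal{V}\subseteq\mathcal{NT}_0$ (where $\mathcal V$ denotes the variety generated by the integral and the odd Nelson-type algebras) being the routine one. Since (N\ref{eq:oddintegral}) is an inequality, and hence preserved by $H$, $S$ and $P$, it suffices to verify it in the two generating classes. In an odd algebra $\ln e=e$, so $e\le e\jn(e\to x)=\ln e\jn(\ln e\to x)$ is immediate; in an integral algebra $\ln e$ is the least element (as in the proof that integral Nelson-type algebras coincide with Nelson residuated lattices), so $\ln e\to x$ is the top $e$ and again $e\le\ln e\jn(\ln e\to x)$. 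Thus both generators, and therefore $\mathcal V$, lie in $\mathcal{NT}_0$.

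The substance is the reverse inclusion. Let ${\bf A}\in\mathcal{NT}_0$. By Theorem~\ref{Teorema_representacion} the map $x\mapsto(\tw(x),\tw(\ln x))$ embeds ${\bf A}$ into ${\bf Tw}({\bf H},\imath)$, where ${\bf H}={\bf A}_\tw$ is a Brouwerian algebra with top $e$ and $\imath=\tw(\ln e)$ is cyclic. I would first rewrite (N\ref{eq:oddintegral}) as a condition on $({\bf H},\imath)$ alone. Using $e=(e,\imath)$, $\ln e=(\imath,e)$ and the twist formulas (\ref{eq:sim})--(\ref{eq:ld}) one computes, for $(a,b)\in\text{Tw}(H,\imath)$, that $\ln e\to(a,b)=(\imath\to a,\imath\mt b)$ and hence $\ln e\jn(\ln e\to(a,b))=(\imath\jn(\imath\to a),\,\imath\mt b)$; so $(e,\imath)\le\ln e\jn(\ln e\to(a,b))$ reduces to $\imath\jn(\imath\to a)=e$. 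As the first coordinates of the image of ${\bf A}$ are exactly $\tw[A]=H$, the axiom (N\ref{eq:oddintegral}) is equivalent to
\[ \imath\jn(\imath\to a)=e \quad\text{for all } a\in H, \]
i.e. to $\imath$ being a complemented (central) element of the Brouwerian algebra ${\bf H}$.

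The key step is then to decompose $({\bf H},\imath)$. I would show that $\delta\colon h\mapsto(h\mt\imath,\,h\jn\imath)$ is an injective homomorphism of Brouwerian algebras from ${\bf H}$ into $(\da\imath)\times(\ua\imath)$, where $\da\imath$ is the quotient by the filter $\ua\imath$ (a Brouwerian algebra with top $\imath$) and $\ua\imath$ is the Heyting algebra with bottom $\imath$. Injectivity is pure distributivity; that $h\mapsto h\mt\imath$ preserves $\to$ is the standard filter-quotient fact; the only nontrivial point is that $h\mapsto h\jn\imath$ preserves $\to$, i.e. $(x\to y)\jn\imath=(x\jn\imath)\to(y\jn\imath)$, and this is exactly where the displayed condition is used: putting $c=x\to(y\jn\imath)$ and noting $\imath\jn(\imath\to y)=e$ gives $c=(c\mt\imath)\jn(c\mt(\imath\to y))$, while $c\mt(\imath\to y)\mt x\le(y\jn\imath)\mt(\imath\to y)\le y$ yields $c\mt(\imath\to y)\le x\to y$, so $c\le\imath\jn(x\to y)$. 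Under $\delta$ the element $\imath$ maps to $(\imath,\imath)$, which is the top of $\da\imath$ and the bottom of $\ua\imath$; hence $\delta$ is a morphism $({\bf H},\imath)\to(\da\imath,\imath)\times(\ua\imath,\imath)$ in $\mathcal{RL}_\text{cy}$. Applying the functor ${\bf T}$ (which preserves injections and finite products) gives embeddings ${\bf A}\hookrightarrow{\bf Tw}({\bf H},\imath)\hookrightarrow{\bf Tw}(\da\imath,\imath)\times{\bf Tw}(\ua\imath,\imath)$. Here ${\bf Tw}(\da\imath,\imath)$ is odd, as $\imath$ is its top $e$, hence an NPc-lattice, and ${\bf Tw}(\ua\imath,\imath)$ is integral, as $\imath$ is its bottom, hence a Nelson residuated lattice; both are Nelson-type. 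Thus ${\bf A}\in IS(\text{odd}\times\text{integral})\subseteq\mathcal V$, finishing the inclusion.

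The main obstacle is the decomposition lemma of the third paragraph, and inside it the verification that joining with $\imath$ is a homomorphism for $\to$: this is the single place where axiom (N\ref{eq:oddintegral}) is consumed, and the equivalent form $\imath\jn(\imath\to a)=e$ is precisely calibrated for it (by contrast, a middle element $\imath$ of a chain fails it, and indeed cannot be split off). A secondary but routine point is checking that ${\bf T}$ sends the product object of $\mathcal{RL}_\text{cy}$ to the product of the twist structures, namely ${\bf Tw}({\bf H}_1\times{\bf H}_2,(\imath_1,\imath_2))\cong{\bf Tw}({\bf H}_1,\imath_1)\times{\bf Tw}({\bf H}_2,\imath_2)$ via $((a_1,a_2),(b_1,b_2))\mapsto((a_1,b_1),(a_2,b_2))$, which follows coordinatewise from the definitions (\ref{eq:sim})--(\ref{eq:rd}).
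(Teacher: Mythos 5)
Your proof is correct, but it takes a genuinely different route from the paper's. The paper's argument is a two-line subdirect-irreducibility computation: by \cite[Theorem 2.9]{HarRafTsi} the unit $e$ is join-irreducible in any subdirectly irreducible commutative residuated lattice, so distributivity applied to (N\ref{eq:oddintegral}) in the form $e = (\ln e \wedge e) \vee ((\ln e\to x) \wedge e)$ forces either $e\leq \ln e$ (hence $e=\ln e$ by Lemma \ref{lem:integralcase}, i.e.\ the algebra is odd) or $\ln e\leq x$ for all $x$ (i.e.\ the algebra is integral); this immediately gives generation by the two subclasses. You instead pass through the twist representation, translate (N\ref{eq:oddintegral}) into the condition $\imath\jn(\imath\to a)=e$ on $({\bf H},\imath)=({\bf A}_\tw,\tw(\ln e))$, and split ${\bf H}$ via $h\mapsto (h\mt\imath, h\jn\imath)$ into $\da\imath\times\ua\imath$, where the only delicate point --- that $h\mapsto h\jn\imath$ preserves $\to$ --- is exactly where the translated axiom is consumed; your verification of that step is correct, as is the reduction of the axiom to the first coordinates (which exhaust $H$) and the compatibility of $\m T$ with products. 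What your approach buys is a sharper structural conclusion: every algebra in $\mathcal{NT}_0$ embeds into the product of a \emph{single} odd and a \emph{single} integral Nelson-type algebra, rather than merely lying in the generated variety; it also avoids the external fact about join-irreducibility of $e$. What it costs is length and reliance on the whole representation machinery of Theorem \ref{Teorema_representacion}, where the paper's argument works directly inside the subdirectly irreducible algebra.
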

\begin{proof} Let ${\bf A}\in \mathcal{NT}_0$ be subdirectly irreducible. By \cite[Theorem 2.9]{HarRafTsi}, in any subdirectly irreducible commutative residuated lattice $e$ is join-irreducible. 
By distributivity in (N4)
$e = (\ln e \wedge e) \vee ((\ln e\to x) \wedge e),$
so either $e\leq \ln e$, in which case by Lemma \ref{lem:integralcase} we have $e=\ln e$, or $\ln e\leq x$ for all $x$, which implies that $\bf A$ is integral.
\end{proof}

\section{ Sendlewski-style representation}

We present two cases in which we can improve the categorical adjunction to a categorical equivalence.

\subsection{The case of Nelson-type algebras}

Theorem \ref{Teorema_representacion} holds for algebras in $\mathcal{NT}$ with the Nelson conucleus given by $\tw(x)=(x\wedge e)^2$. To improve the  result for this class, we follow Sendlewski's ideas for Nelson algebras (\cite{Sen90}) and Odintsov's ideas for the paraconsistent case (\cite{Odi04}).
Recall that if ${\bf A}=(A,\vee, \wedge, \cdot, \to, \ln, e)$ is in $\mathcal{NT}$, then ${\bf H}_{\bf A}=(\tw[A], \vee, \wedge_\tw, \to_\tw, e)$ is a Brouwerian algebra.

If ${\bf H}$ is a Brouwerian algebra and $F\subseteq H$ is a lattice filter, then $F$ is called a \textbf{Boolean filter} if for every $x,y\in H$ the element $x\vee (x\rightarrow y)$ belongs to $F$; such elements are called \textbf{dense}. The reader may note that if ${\bf H}$ is lower bounded, i.e., it is the $\bot$-free reduct of a Heyting algebra and $F$ is a Boolean filter of ${\bf H}$, then the quotient ${\bf H}/F$ is the $\bot$-free reduct of a Boolean algebra.

\begin{lem}\label{NT1}
Let $\bf H$ be a Brouwerian algebra, $\imath\in H$ and $F$ a Boolean filter of $\m H$. Then the set
$$\text{Tw}(H,\imath,F) = \{(a,b)\in H\times H^\partial: a\wedge b\leq \imath,a\vee b\in F\}$$
is the universe of a subalgebra of ${\bf Tw(H,\imath)}$ which is a twist-product over ${\bf H}$.
\end{lem}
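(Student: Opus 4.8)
The plan is to verify that $\text{Tw}(H,\imath,F)$ is closed under the operations $\wedge,\vee,\cdot,\ld,\rd,\ln$ and contains the identity $(e,\imath)$, so that it is a subalgebra of ${\bf Tw}({\bf H},\imath)$, and then to check that it contains every element $\tw_{\rm Tw}(a,b)=(a,a\to\imath)$, which makes it a twist-product over $({\bf H},\imath)$ in the sense of Definition \ref{def:twist-product}. Since ${\bf H}$ is commutative, $\imath$ is cyclic and $e=1$, so $(e,\imath)=(1,\imath)$ and $\tw_{\rm Tw}(a,b)=(a,a\to\imath)$, while the product and divisions take the simplified Brouwerian forms $(a,b)\cdot(a',b')=(a\wedge a',(a\to b')\wedge(a'\to b))$ and $(a,b)\ld(a',b')=\big((a\to a')\wedge(b'\to b),\,b'\wedge a\big)$.

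Since $\text{Tw}(H,\imath,F)\subseteq\text{Tw}(H,\imath)$ and the latter is the universe of the whole algebra ${\bf Tw}({\bf H},\imath)$, the first membership condition $a\wedge b\le\imath$ is automatically inherited by the value of any operation; the entire burden is therefore to check that the second condition $a\vee b\in F$ is preserved. I will use repeatedly that $F$ is an up-set closed under $\wedge$ and containing $1$, that every dense element $x\vee(x\to y)$ lies in $F$, and that $z\le x\to z$ for all $x,z$ (so $a\to b\ge b$, $a\to a'\ge a'$, and so on). Closure under $\ln$ is immediate by symmetry, and closure under $\vee$ and $\wedge$ is routine: using distributivity one writes, e.g., $(a\vee a')\vee(b\wedge b')=(a\vee a'\vee b)\wedge(a\vee a'\vee b')$, where each factor dominates $a\vee b\in F$ or $a'\vee b'\in F$, hence lies in $F$, whence so does their meet.

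The crux is closure under $\cdot$ and under the divisions, where the Boolean-filter hypothesis is genuinely used. For the product one must show that
\begin{align*}
(a\wedge a')\vee\big[(a\to b')\wedge(a'\to b)\big]
&=\big[(a\wedge a')\vee(a\to b')\big]\wedge\big[(a\wedge a')\vee(a'\to b)\big]
\end{align*}
lies in $F$. Expanding the first bracket once more by distributivity gives $\big[a\vee(a\to b')\big]\wedge\big[a'\vee(a\to b')\big]$; the first factor is dense, and the second dominates $a'\vee b'$ because $b'\le a\to b'$, so both lie in $F$. The second bracket is symmetric, using that $a'\vee(a'\to b)$ is dense and $a\vee(a'\to b)\ge a\vee b$. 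The divisions go the same way: for $(a,b)\ld(a',b')$ one expands $\big[(a\to a')\wedge(b'\to b)\big]\vee(b'\wedge a)$ by distributivity and checks that each resulting join is either a dense element ($a\vee(a\to a')$ and $b'\vee(b'\to b)$) or dominates $a'\vee b'$ or $a\vee b$; the case of $\rd$ is symmetric. The main obstacle is precisely to manage these repeated distributive expansions and to recognize in each factor which terms are dense and which dominate a prescribed member of $F$.

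Finally, membership of the identity $(1,\imath)$ is clear since $1\wedge\imath=\imath\le\imath$ and $1\vee\imath=1\in F$, so $\text{Tw}(H,\imath,F)$ is a subalgebra. For each $a\in H$ the element $\tw_{\rm Tw}(a,b)=(a,a\to\imath)$ satisfies $a\wedge(a\to\imath)\le\imath$ by residuation and $a\vee(a\to\imath)\in F$ because this is a dense element; hence $\tw_{\rm Tw}[\text{Tw}(H,\imath)]\subseteq\text{Tw}(H,\imath,F)$ and the subalgebra is a twist-product over $({\bf H},\imath)$.
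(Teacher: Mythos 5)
Your proposal is correct and follows essentially the same route as the paper's proof: the same distributive expansion of $(a\wedge a')\vee\bigl[(a\to b')\wedge(a'\to b)\bigr]$ into factors that are either dense or dominate $a\vee b$ or $a'\vee b'$, and the same observation that $(a,a\to\imath)$ is in the set because $a\vee(a\to\imath)$ is dense. The only (harmless) difference is that you verify closure under $\vee$ and the divisions directly, whereas the paper derives them from $\ln$, $\wedge$ and $\cdot$.
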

\begin{proof}We only need to show the closure of the operations $\ln$, $\wedge$ and $\cdot$, as $\vee$ and $\to$ can be derived from them. Consider $(a,b),(a',b')\in \text{Tw}(H,\imath,F)$. This means that $a\wedge b\leq\imath$, $a\vee b\in F$, $a'\wedge b'\leq\imath$ and $a'\vee b'\in F$.
\begin{itemize}
    \item[$(\ln)$] this is immediate from the definition.
    \item[$(\wedge)$] $(a,b)\wedge(a',b')=(a\wedge a',b\vee b')$. We have to show that $(a\wedge a')\vee(b\vee b')\in F$. Indeed,
        $ (a\wedge a')\vee(b\vee b')   = (a\vee b\vee b')\wedge (a'\vee b\vee b')\geq (a\vee b)\wedge (a'\vee b')\in F.$
    \item[$(\cdot)$] $(a,b)\cdot(a',b') = (a\wedge a',a\to b'\wedge a'\to b)$, then we have to prove that $(a\wedge a')\vee(a\to b'\wedge a'\to b)\in F$. Recalling that $F$ contains all the dense elements,
        \begin{align*}(a\wedge a')\vee(a\to b'\wedge a'\to b) &=\\
                &\hspace{-4cm}=(a\vee (a\to b'))\wedge(a\vee (a'\to b))\wedge(a'\vee (a\to b'))\wedge(a'\vee (a'\to b))\\
                &\hspace{-4cm}\geq (a\vee (a\to b'))\wedge(a\vee b)\wedge(a'\vee b')\wedge(a'\vee (a'\to b))\in F\end{align*}
\end{itemize}
Finally, for each $a\in H$, the pair $(a, a\to \imath)$ is in $\text{Tw}(H, \imath, F)$ since $a\cdot (a\to \imath)\le \imath$ by residuation and $a\vee (a\to \imath)$ is a dense element contained in $F.$ Thus Definition \ref{def:twist-product} asserts that $\text{Tw}(H, \imath, F)$ is a twist-product over ${\bf H}.$
\end{proof}

To obtain an isomorphism for each ${\bf A}\in \mathcal{NT}$, we need to find a Boolean filter $F$ on ${\bf H}_{\bf A}$ such that ${\bf A}\cong {\bf Tw}({\bf H}_{\bf A},\imath,F)$. Recalling that $x\Rightarrow y = x\dld y = y\drd x$ in the commutative case, we will frequently use the fact that the equation
\begin{equation}\label{eq:implicadebil_inf}
\tw(\ln(x\Rightarrow y))=\tw(\tw(x)\wedge(\ln y))
\end{equation}
 holds for every pair of elements $x, y$ in an algebra ${\bf A}\in \mathcal{NT}$ (it follows from the definition of $\Rightarrow$, the results of Lemma \ref{lem:propiedades_involucion} and the fact that $\wedge_\tw$ and $\cdot$ coincide in $\bf H_{\bf A}$).

\begin{lem} If ${\bf A}\in\mathcal{NT}$ and then the subset $F_A=\{\tw(x\vee\ln x): x\in A\}=\{\tw(w) : \ln w \leq w\} = \{\tw(z) : \tw(\ln z) \leq \tw(z)\}$  of $H_A$ is a Boolean filter.\end{lem}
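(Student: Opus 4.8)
The plan is to first check that the three displayed descriptions of $F_A$ coincide, and then to verify the two defining properties of a Boolean filter: that $F_A$ is a lattice filter of $\m H_A$ and that it contains every dense element. Throughout I would work inside the twist-product picture supplied by Theorem \ref{Teorema_representacion}: identifying $\m A$ with its image under $\phi$, I regard $\m A$ as a subalgebra of ${\bf Tw}(\m H_A, \imath)$ that contains all pairs $(c, c\to_\tw\imath)$, where $\imath=\tw(\ln e)$ and $\m H_A=\tw[\m A]$ is the associated Brouwerian algebra. Under this identification $\phi(x\vee\ln x)=\bigl(\tw(x)\vee\tw(\ln x),\ \tw(x)\wedge\tw(\ln x)\bigr)$, so $F_A=\{a\vee b : (a,b)\in A\}$. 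The equality of the three sets is then routine: if $\ln w\le w$ then $w=w\vee\ln w$, and conversely $\ln(x\vee\ln x)=\ln x\wedge x\le x\vee\ln x$; moreover $\tw(\ln w)\le\tw(w)$ follows from $\ln w\le w$ by monotonicity, while if $\tw(\ln z)\le\tw(z)$ then $w:=z\vee\ln z$ satisfies $\ln w\le w$ and $\tw(w)=\tw(z)\vee\tw(\ln z)=\tw(z)$ by (T\ref{Tsup}). The useful consequence, in the pair picture, is that since $A$ is closed under $\ln$ and $\vee$, from any $(a,b)\in A$ we obtain $(a,b)\vee\ln(a,b)=(a\vee b,\ a\wedge b)\in A$; hence $h\in F_A$ if and only if there is a \emph{deep witness}, i.e. a pair $(h,c)\in A$ with $c\le h$ (and then $c=h\wedge c\le\imath$ automatically).

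To see that $F_A$ contains the dense elements, I would fix $x,y\in H_A$ and set $d=x\vee(x\to_\tw y)$. Using $x\Rightarrow y=\tw(x)\ld y$ and the element $v=x\vee(x\Rightarrow y)\in A$, property (T\ref{Tsup}) gives $\tw(v)=\tw(x)\vee\tw(x\Rightarrow y)=d$, while (C\ref{Cinf}), monotonicity, idempotence and the identity \eqref{eq:implicadebil_inf} yield
\[
\tw(\ln v)=\tw\bigl(\tw(\ln x)\wedge\tw(\ln(x\Rightarrow y))\bigr)\le\tw(\ln(x\Rightarrow y))=\tw(\tw(x)\wedge\ln y)\le\tw(x)=x\le d.
\]
Thus $\phi(v)=(d,\tw(\ln v))$ is a deep witness for $d$, so $d\in F_A$.

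It remains to show that $F_A$ is a filter. It is nonempty, since $e$ is the top of $\m H_A$ and $\tw(\ln e)=\imath\le e$, so $e\in F_A$. For upward closure, if $h_1\le h_2$ in $H_A$ and $z$ is a deep witness for $h_1$, then $z':=z\vee h_2$ gives $\tw(z')=h_1\vee h_2=h_2$ and, by (C\ref{Cinf}) and monotonicity, $\tw(\ln z')\le\tw(\ln z)\le h_1\le h_2$, so $h_2\in F_A$. The crux is closure under meets. Given deep witnesses $(h_1,c_1),(h_2,c_2)\in A$ (so $c_i\le h_i$), both the meet $(h_1,c_1)\wedge(h_2,c_2)=(m,\,c_1\vee c_2)$ and the product $(h_1,c_1)\cdot(h_2,c_2)=(m,\,\kappa)$ computed from \eqref{eq:ast}, with $m=h_1\wedge h_2$ and $\kappa=(h_1\to_\tw c_2)\wedge(h_2\to_\tw c_1)$, have first coordinate $m$ but a second coordinate that may exceed $m$. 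The remedy is to combine them: the element
\[
w=\bigl[(h_1,c_1)\cdot(h_2,c_2)\bigr]\vee\bigl[(h_1,c_1)\wedge(h_2,c_2)\bigr]=\bigl(m,\ \kappa\wedge(c_1\vee c_2)\bigr)
\]
lies in $A$. Distributing and using the Brouwerian identity $g\wedge(g\to_\tw c)=g\wedge c$ gives $\kappa\wedge c_1\le(h_1\to_\tw c_2)\wedge c_1\le(h_1\to_\tw c_2)\wedge h_1=h_1\wedge c_2\le m$ and symmetrically $\kappa\wedge c_2\le m$, whence $\kappa\wedge(c_1\vee c_2)=(\kappa\wedge c_1)\vee(\kappa\wedge c_2)\le m$. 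Thus $w$ is a deep witness for $m$, so $m=h_1\wedge_\tw h_2\in F_A$.

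I expect this meet-closure step to be the only genuine obstacle: the coincidence of the three characterizations, the dense-element containment, and upward closure are all direct, whereas for meets one must exhibit an element of $A$ realizing $h_1\wedge_\tw h_2$ whose negative part sits below the meet, and neither the meet nor the product of the witnesses does this on its own. The combined witness $w$ above, together with the single residuation identity $g\wedge(g\to_\tw c)=g\wedge c$ and distributivity of $\m H_A$, is exactly what forces the computation to close.
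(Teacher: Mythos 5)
Your proof is correct, and its crucial step --- closure of $F_A$ under meets --- goes through a genuinely different witness than the paper's. The paper works entirely inside $\m A$ with the conucleus identities: for $\tw(w),\tw(z)\in F_A$ with $\ln w\le w$ and $\ln z\le z$ it forms the element $t=\ln\left(w\wedge\left(w\Rightarrow\ln(w\vee z)\right)\right)\wedge\ln\left(z\wedge\left(z\Rightarrow\ln(w\vee z)\right)\right)$ and checks, via Equation \eqref{eq:implicadebil_inf}, divisibility and distributivity, that $\tw(t)=\tw(w)\wedge_\tw\tw(z)$ while $\tw(\ln t)=\tw(\ln w)\wedge_\tw\tw(\ln z)\le\tw(t)$; in your coordinates this is the witness $(m,\,c_1\wedge_\tw c_2)$. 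You instead pass to the twist picture through Theorem \ref{Teorema_representacion} (which is established independently of this lemma, so there is no circularity), recast membership in $F_A$ as the existence of a pair $(h,c)\in\phi(A)$ with $c\le h$, and take as witness $(wz)\vee(w\wedge z)$, i.e. $(m,\,\kappa\wedge_\tw(c_1\vee c_2))$; the required inequality then drops out of distributivity and the single Brouwerian identity $g\wedge(g\to_\tw c)=g\wedge c$. Your witness is different from the paper's (its second coordinate is $\kappa\wedge_\tw(c_1\vee c_2)$ rather than $c_1\wedge_\tw c_2$) and the verification is arguably more transparent, at the mild cost of invoking the embedding $\phi$ as scaffolding. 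The remaining parts --- the equality of the three descriptions of $F_A$, the top element, upward closure (where the paper's displayed hypothesis ``$\tw(w)\ge\tw(y)$'' is evidently a typo for ``$\tw(w)\le\tw(y)$'', as its own computation $\tw(w\vee y)=\tw(y)$ shows), and the dense elements via $v=x\vee(x\Rightarrow y)$ --- coincide with the paper's argument up to the change of picture.
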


\begin{proof} For all $x \in A$, we have $\ln (x \jn \ln x)=\ln x \mt x \leq x \jn \ln x$, and if $\ln w \leq w$, then $w \jn \ln w = w$. So, $\{\tw(x\vee\ln x): x\in A\}=\{\tw(w) : \ln w \leq w\}$.
We have that if $\ln w \leq w$, then $\tw(\ln w) \leq \tw(w)$, so $\{\tw(w) : \ln w \leq w\} \subseteq \{\tw(z) : \tw(\ln z) \leq \tw(z)\}$. Also, if $z$ satisfies $\tw(\ln z) \leq \tw(z)$, then $\tw(z \jn \ln z)=\tw(z) \jn \tw(\ln z)=\tw(z)$; so, $\{\tw(z) : \tw(\ln z) \leq \tw(z)\} \subseteq \{\tw(x\vee\ln x): x\in A\}$.

We will prove that $F_A$ is a (lattice) filter. Indeed, as $\tw(e)$ is the top element of ${\bf H}_{\bf A}$, we have $\tw(e) = \tw(e\vee\ln e)\in F_A$.

To prove closure under $\wedge$, take $\tw(w), \tw(z)\in F_A$, with
$\ln w \leq w$ and $\ln z \leq z$. For
\begin{align*}  t &= \ln \left(w\wedge \left(w\Rightarrow \ln(w\vee z)\right)\right)\mt \ln \left(z\wedge \left(z\Rightarrow \ln(w\vee z)\right)\right)\\
    &=(\ln w \jn \tw(w)(w \jn z) ) \mt (\ln z \jn \tw(z)(w \jn z) ),\end{align*}
    we have $ \ln t = \left(w\wedge \left(w\Rightarrow \ln(w\vee z)\right)\right)\vee\left(z\wedge \left(z\Rightarrow \ln(w\vee z)\right)\right).$
 Using divisibility and distributivity,
\begin{align*}
    \tw(\ln t)                  &= \left(\tw(w) \wedge_\tw \tw(\ln(w\vee z))\right)\vee\left(\tw(z)\wedge_\tw \tw(\ln(w\vee z))\right)\\
                                        &= \tw(w\vee z) \wedge_\tw \tw(\ln(w\vee z))\\
                                        &= \tw(\ln w) \wedge_\tw \tw(\ln z).
\end{align*}

Recalling Equation (\ref{eq:implicadebil_inf}) and that $p\leq \ln \ln(p\vee q)$ for all $p,q$ we get
\begin{align*}
    \tw(t)  &= \left(\tw(\ln w)\vee \left(\tw(w)\wedge_\tw \tw(\ln\ln(w\vee z))\right)\right)\wedge\left(\tw(\ln z)\vee \left(\tw(z)\wedge_\tw \tw(\ln\ln(w\vee z))\right)\right)\\
                                        &= \tw(w\vee\ln w) \wedge_\tw \tw(z\vee\ln z)\\
                                        &= \tw(w) \wedge_\tw \tw(z),
\end{align*}
so $\tw(\ln t) \leq \tw(t)$, hence $\tw(t) \in F_A$. Since also $\tw(w) \wedge_\tw \tw(z)=\tw(t)$, we get  $\tw(w) \wedge_\tw \tw(z) \in F_A$.

Additionally, if $\tw(w) \geq \tw(y)$, where $\ln w \leq w$, we have $\ln (w \jn y) = \ln w \mt \ln y \leq w \leq w \jn y$, hence $\tw(w \jn y) \in F_A$. Since we also have $\tw(w \jn y)=\tw(w) \jn \tw(y)=\tw(y)$ we get  $\tw(y) \in F_A$.

 To show that $F_A$ contains all dense elements, we need to see for all $x,y\in A$,  that the element $\tw(x) \vee (\tw(x) \to_\tw \tw(y)) = \tw(x\vee (x\Rightarrow y))$ is in $F_A$. This is true since using Equation (\ref{eq:implicadebil_inf}) we obtain
\begin{align*}\tw(\ln (x\vee (x\Rightarrow y))) &= \tw(\ln x\wedge (\tw(x)\cdot \ln y))= \tw(\ln x) \wedge \tw(x) \wedge \tw(\ln y)\\
&\leq \tw(x) \vee (\tw(x) \to_\tw \tw(y))= \tw(x\vee (x\Rightarrow y)).\end{align*}
\end{proof}

\begin{thm}\label{NT2} Let ${\bf A}\in\mathcal{NT}$. If ${\bf H_A}$, $\imath$ and $F_A$ are as before, then ${\bf A}\cong {\bf Tw({\bf H}_{\bf A},\imath,F_A)}$.\end{thm}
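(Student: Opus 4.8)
The plan is to show that the injective homomorphism $\phi=\phi_{({\bf A},\tw)}$ of Theorem \ref{Teorema_representacion}, given by $\phi(x)=(\tw(x),\tw(\ln x))$, corestricts to an isomorphism onto ${\bf Tw}({\bf H}_{\bf A},\imath,F_A)$. By Theorem \ref{Teorema_representacion} the map $\phi$ is already an embedding of ${\bf A}$ into ${\bf Tw}({\bf H}_{\bf A},\imath)$ (recall that for ${\bf A}\in\mathcal{NT}$ the residuated lattice ${\bf A}_\tw={\bf H}_{\bf A}$ is a Brouwerian algebra, so $\cdot=\wedge_\tw$ there), and by Lemma \ref{NT1} the set $\text{Tw}({\bf H}_{\bf A},\imath,F_A)$ is the universe of a subalgebra of ${\bf Tw}({\bf H}_{\bf A},\imath)$. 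Thus the proof splits into two parts: that $\phi[A]\subseteq\text{Tw}({\bf H}_{\bf A},\imath,F_A)$, and that $\phi$ maps $A$ onto it.

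First I would check the containment of the image. For $x\in A$ the pair $\phi(x)=(\tw(x),\tw(\ln x))$ already satisfies $\tw(x)\wedge_\tw\tw(\ln x)\le\imath$, since this is exactly the well-definedness computation $\tw(x)\cdot\tw(\ln x)\le\tw(\ln e)=\imath$ carried out in Theorem \ref{Teorema_representacion}. The remaining requirement is $\tw(x)\vee\tw(\ln x)\in F_A$, and this is immediate: by (T\ref{Tsup}) we have $\tw(x)\vee\tw(\ln x)=\tw(x\vee\ln x)$, which lies in $F_A$ by the very definition $F_A=\{\tw(x\vee\ln x):x\in A\}$. Hence $\phi$ corestricts to an embedding ${\bf A}\hookrightarrow{\bf Tw}({\bf H}_{\bf A},\imath,F_A)$, injectivity being Lemma \ref{quasiequation} exactly as before.

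The core of the argument is surjectivity. Given $(u,v)\in\text{Tw}({\bf H}_{\bf A},\imath,F_A)$, so $u,v\in H_A\subseteq A$ with $u\wedge_\tw v\le\imath$ and $u\vee v\in F_A$, I must produce $x\in A$ with $\tw(x)=u$ and $\tw(\ln x)=v$. I would first reduce this to a one-sided problem: it suffices to find $m\in A$ with $\tw(m)=v$ and $\tw(\ln m)\le u$. Indeed, setting $x:=u\vee\ln m$ and using (T\ref{Tsup}), the De Morgan laws, (C\ref{Cinf}), and the identity $\tw(\ln u)=u\to_\tw\imath$ (from the last computation in the proof of Theorem \ref{Teorema_representacion}), one gets $\tw(x)=u\vee\tw(\ln m)=u$ and $\tw(\ln x)=\tw(\ln u\wedge m)=\tw(\ln u)\wedge_\tw\tw(m)=(u\to_\tw\imath)\wedge_\tw v=v$, the final equality because $u\wedge_\tw v\le\imath$ forces $v\le u\to_\tw\imath$ by residuation in the Brouwerian algebra ${\bf H}_{\bf A}$. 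Thus $\phi(x)=(u,v)$.

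The main obstacle is therefore the construction of the ``negative witness'' $m$, and this is precisely where the hypothesis $u\vee v\in F_A$ is indispensable: without it $\phi$ need not be onto, as the subalgebra ${\bf S}$ of ${\bf Tw}({\bf G}_3,0)$ discussed after Theorem \ref{Teorema_representacion} shows. Here I would exploit the Boolean, i.e. dense, character of $F_A$: writing $u\vee v=\tw(c\vee\ln c)$ for a suitable $c\in A$, I would glue the positive datum $v$ to the dense witness $c\vee\ln c$ so as to obtain an element of the fibre $\{m:\tw(m)=v\}$ whose negative part $\tw(\ln m)$ has been pushed below $u$, verifying the two requisite identities by means of the $\mathcal{NT}$-equations (N\ref{eq:nelsonnt})--(N\ref{eq:producto_cong}), equation \eqref{eq:implicadebil_inf}, distributivity, and the divisibility $\cdot=\wedge_\tw$ of ${\bf H}_{\bf A}$. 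This step is the residuated analogue of the gluing lemma in Sendlewski's and Odintsov's representations; the fact that $F_A$ consists exactly of the dense elements is what guarantees that the negative part of a witness for $u\vee v$ collapses below $u$, so that every qualifying pair is realized. Once such an $m$ is produced, the reduction above delivers the preimage $x$, and $\phi$ is the desired isomorphism ${\bf A}\cong{\bf Tw}({\bf H}_{\bf A},\imath,F_A)$.
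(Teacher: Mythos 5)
Your overall strategy is the right one and matches the paper's: use the embedding $\phi$ of Theorem \ref{Teorema_representacion}, observe that the containment $\phi[A]\subseteq\text{Tw}(H_A,\imath,F_A)$ is immediate from the definition of $F_A$, and then prove surjectivity by exhibiting a preimage for each pair $(u,v)$ with $u\wedge_\tw v\le\imath$ and $u\vee v\in F_A$. Your reduction of surjectivity to a one-sided problem is also correct: if $m\in A$ satisfies $\tw(m)=v$ and $\tw(\ln m)\le u$, then $x=u\vee\ln m$ indeed satisfies $\tw(x)=u$ and $\tw(\ln x)=(u\to_\tw\imath)\wedge_\tw v=v$, the last step using $v\le u\to_\tw\imath$.

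However, the proposal has a genuine gap at exactly the point where all the work lies: you never actually construct the witness $m$. The paragraph beginning ``Here I would exploit the Boolean, i.e.\ dense, character of $F_A$\dots'' describes the desired properties of $m$ and asserts that a ``gluing'' produces it, but gives neither a formula nor a verification. Producing an element of $A$ whose $\tw$-image is $v$ and whose $\tw(\ln\cdot)$-image is forced below $u$ is precisely the nontrivial content of the theorem; in the paper this is done by writing $u\vee v=\tw(w\vee\ln w)$ and taking the explicit element
$z=[(w\wedge\ln w)\vee\ln(x\Rightarrow\tw(y))\vee\ln(y\Rightarrow\tw(x))]\wedge(y\Rightarrow\tw(x))$,
followed by a genuine computation (using (\ref{eq:implicadebil_inf}), $\tw(x)\le\tw(\ln\tw(y))$, $\tw(y)\le\tw(\ln\tw(x))$, and divisibility) to check both coordinates. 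Your reduction does not make this construction any easier -- the hard part is entirely deferred to the unproved existence of $m$. A secondary inaccuracy: you invoke ``the fact that $F_A$ consists exactly of the dense elements,'' but $F_A$ is only shown to be a Boolean filter, i.e.\ to \emph{contain} all dense elements; it may be strictly larger, and what the surjectivity argument actually uses is the defining presentation $F_A=\{\tw(w\vee\ln w):w\in A\}$, not density. As it stands the proposal is an outline of the correct strategy rather than a proof.
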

\begin{proof}From Theorem \ref{Teorema_representacion}, it is sufficient to show that $\phi_{\bf A}(A)=\text{Tw}(A_\tw,\imath,F_A)=\text{Tw}(H_A,\imath,F_A)$. One inclusion is immediate, for if $x\in A$, then $\tw(x\vee\ln x) \in F_A$ by definition of $F_A$.

For the other, consider $(\tw(x),\tw(y))\in \text{Tw}(H_A,\imath,F_A)$. We have to find $z\in A$ such that $\tw(z) = \tw(x)$ and $\tw(\ln z) = \tw(y)$. As $\tw(x)\vee \tw(y)\in F_A$ by hypothesis, let $w\in A$ be such that $\tw(x)\vee \tw(y) = \tw(w\vee\ln w)$. Define now
\begin{align*}
    z &= [(w\wedge \ln w) \vee \ln\left(x\Rightarrow \tw(y)\right) \vee \ln\left(y\Rightarrow \tw(x)\right)]\wedge \left(y\Rightarrow \tw(x)\right).
\end{align*}
Then as $\tw(x) \wedge \tw(y)\leq \tw(\ln e)$, we have that $\tw(x) \leq \tw(\ln \tw(y))$ and $\tw(y) \leq \tw(\ln \tw(x))$, so
\begin{align*}
    \tw(z)                      &= \left(\tw(w\wedge \ln w) \vee \tw\left(\ln\left(x\Rightarrow \tw(y)\right)\right) \vee \tw\left(\ln\left(y\Rightarrow \tw(x)\right)\right)\right)\wedge_\tw \left(\tw(y)\to_\tw \tw(x)\right)\\
                                            &\hspace{-0.5cm}= \left(\tw(w\wedge \ln w) \vee \left(\tw(x) \wedge_\tw \tw(\ln\tw(y))\right) \vee \left(\tw(y) \wedge_\tw \tw(\ln\tw(x))\right)\right)\wedge_\tw \left(\tw(y)\to_\tw \tw(x)\right)\\
                                            &\hspace{-0.5cm}= \left(\tw(w\wedge \ln w) \vee \tw(x)  \vee \tw(y) \right)\wedge_\tw \left(\tw(y)\to_\tw \tw(x)\right)\\
                                            &\hspace{-0.5cm}= \left(\tw(x)  \vee \tw(y) \right)\wedge_\tw \left(\tw(y)\to_\tw \tw(x)\right)\\
                                            &\hspace{-0.5cm}= \tw(x)\\
    \tw(\ln z)      &= \left(\tw(w\vee \ln w) \wedge_\tw \left(\tw(x)\to_\tw \tw(y)\right) \wedge_\tw \left(\tw(y)\to_\tw \tw(x)\right)\right)\vee \left(\tw(y) \wedge_\tw \tw(\ln\tw(x))\right)\\
                                            &\hspace{-0.5cm}= \left((\tw(x)\vee \tw(y)) \wedge_\tw \left(\tw(x)\to_\tw \tw(y)\right) \wedge_\tw \left(\tw(y)\to_\tw \tw(x)\right)\right)\vee \tw(y) \\
                                            &\hspace{-0.5cm}= (\tw(x) \wedge_\tw \tw(y))\vee \tw(y)\\
                                            &\hspace{-0.5cm}= \tw(y).
\end{align*}
\end{proof}

Theorem \ref{NT2} provides a way to improve the result from Theorem \ref{adjunction}. First, we show how morphisms behave between Nelson-type algebras.

\begin{lem}Let ${\bf H}_1,{\bf H}_2$ be Brouwerian algebras, $\imath_1\in H_1,\imath_2\in H_2$ and let $F_1\subseteq H_1, F_2\subseteq H_2$ be Boolean filters. If $f:{\bf H}_1\to{\bf H}_2$ is a morphism such that $f(\imath_1)=\imath_2$ and $f(F_1)\subseteq F_2$, then $\varphi_f:{\bf Tw({\bf L}_1,\imath_1,F_1)}\to {\bf Tw({\bf L}_2,\imath_2,F_2)}$ given by $\varphi_f(a,b)=(f(a),f(b))$ is morphism in $\mathcal{NT}$.\end{lem}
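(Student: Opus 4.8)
The plan is to verify two things: that $\varphi_f$ is well-defined, i.e. that it sends $\text{Tw}(H_1,\imath_1,F_1)$ into $\text{Tw}(H_2,\imath_2,F_2)$, and that it is a homomorphism of the $\mathcal{NT}$-signature. For the second task I would reuse the reduction already exploited in Lemma~\ref{NT1}: in a (commutative) involutive residuated lattice both $\vee$ and $\to$ are term-definable from $\ln$, $\wedge$ and $\cdot$ (via De Morgan and Lemma~\ref{lem:propiedades_involucion}(1)), while the Nelson conucleus $\tw(x)=(x\wedge e)^2$ is itself a term. Hence it suffices to check that $\varphi_f$ preserves $\ln$, $\wedge$, $\cdot$ and the neutral element $(e,\imath_1)$ (where $e$ denotes the top of ${\bf H}_1$).

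For well-definedness, I would take $(a,b)\in\text{Tw}(H_1,\imath_1,F_1)$, so that $a\wedge b\le\imath_1$ and $a\vee b\in F_1$. Since $f$ is a homomorphism of Brouwerian algebras it preserves $\wedge$, $\vee$ and the order; thus $f(a)\wedge f(b)=f(a\wedge b)\le f(\imath_1)=\imath_2$ using $f(\imath_1)=\imath_2$, and $f(a)\vee f(b)=f(a\vee b)\in f(F_1)\subseteq F_2$ using $f(F_1)\subseteq F_2$. Therefore $(f(a),f(b))\in\text{Tw}(H_2,\imath_2,F_2)$. This is exactly the step where the two side conditions on $f$ are consumed.

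The preservation of operations then unwinds to the homomorphism property of $f$ on each coordinate. Preservation of $\ln$ is immediate, as $\varphi_f(\ln(a,b))=\varphi_f(b,a)=(f(b),f(a))=\ln\varphi_f(a,b)$. Since $(a,b)\wedge(a',b')=(a\wedge a',\,b\vee b')$ and $f$ preserves $\wedge$ and $\vee$, we obtain $\varphi_f((a,b)\wedge(a',b'))=\varphi_f(a,b)\wedge\varphi_f(a',b')$. Recalling that in a Brouwerian algebra $(a,b)\cdot(a',b')=(a\wedge a',\,(a\to b')\wedge(a'\to b))$ and that $f$ preserves $\wedge$ and $\to$, the same coordinatewise pattern yields $\varphi_f((a,b)\cdot(a',b'))=\varphi_f(a,b)\cdot\varphi_f(a',b')$. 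Finally, the neutral element $(e,\imath_1)$ maps to $(f(e),f(\imath_1))=(e,\imath_2)$ because $f$ preserves the top and $f(\imath_1)=\imath_2$.

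I do not expect a genuine obstacle: the whole content of the statement is that the two hypotheses $f(\imath_1)=\imath_2$ and $f(F_1)\subseteq F_2$ are precisely the conditions the coordinatewise map needs in order to respect the defining constraints $a\wedge b\le\imath$ and $a\vee b\in F$, so the only real work is the well-definedness check above. The one point to keep in mind is that a morphism in $\mathcal{NT}$ must also commute with $\tw$; but this requires no separate argument, since in $\mathcal{NT}$ the conucleus is the term $(x\wedge e)^2$ and hence is automatically preserved once $\varphi_f$ is shown to be a homomorphism of the underlying involutive residuated lattices.
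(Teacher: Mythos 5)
Your proposal is correct. The paper states this lemma without proof, treating it as routine, and your argument supplies exactly the verification that is intended: the two hypotheses $f(\imath_1)=\imath_2$ and $f(F_1)\subseteq F_2$ are consumed precisely in the well-definedness check, the coordinatewise preservation of $\ln$, $\wedge$, $\cdot$ and $(e,\imath_1)$ follows from $f$ being a Brouwerian algebra homomorphism (using that $\cdot=\wedge$ in $\m H_1$, so the twist product is $(a\mt a', (a\to b')\mt(a'\to b))$), and the remaining operations $\jn$, $\ld$, $\rd$ and the term-defined conucleus $\tw(x)=(x\mt e)^2$ come for free.
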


\begin{lem}Let ${\bf A}_1,{\bf A}_2\in \mathcal{NT}$. If $\varphi:{\bf A}_1\to{\bf A}_2$ is a morphism, then $f_{\varphi}=\varphi|_{{\bf H}_{{\bf A}_1}}$ is a morphism from ${\bf H}_{{\bf A}_1}$ into ${\bf H}_{{\bf A}_2}$ such that $f_{\varphi}((\ln e_1 \wedge e_1)^2)=(\ln e_2\wedge e_2)^2$ and $f_{\varphi}(F_{A_1})\subseteq F_{A_2}$.\end{lem}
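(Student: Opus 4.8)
The plan is to exploit the fact that for algebras in $\mathcal{NT}$ the Nelson conucleus is given by the term $\tw(x)=(x\wedge e)^2$ in the language of commutative involutive residuated lattices. Since $\varphi$ preserves $\wedge$, $\cdot$ and $e$, it commutes with this term: for every $x\in A_1$,
\[
\varphi(\tw(x))=\varphi((x\wedge e_1)^2)=(\varphi(x)\wedge e_2)^2=\tw(\varphi(x)).
\]
This identity $\varphi\circ\tw=\tw\circ\varphi$ is the engine of the entire argument, and it is exactly where term-definability of the conucleus (the feature distinguishing $\mathcal{NT}$ from general $\mathcal{NCA}$) enters; without it there would be no reason for $\varphi$ to carry $H_{A_1}=\tw[A_1]$ into $H_{A_2}=\tw[A_2]$.

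Granting this, I would first note that $f_{\varphi}=\varphi|_{H_{A_1}}$ is well defined as a map $H_{A_1}\to H_{A_2}$, since every element of $H_{A_1}$ is of the form $\tw(x)$ and $\varphi(\tw(x))=\tw(\varphi(x))\in H_{A_2}$. Next I would check that $f_{\varphi}$ preserves the Brouwerian operations $\vee,\wedge_\tw,\to_\tw,e$. Preservation of $\vee$ and $e$ is immediate because $\varphi$ already preserves them in ${\bf A}_1$; for the other two I would combine the commutation identity with preservation of $\wedge$ and $\to$, obtaining, for $u,v\in H_{A_1}$,
\begin{align*}
f_{\varphi}(u\wedge_\tw v)&=\varphi(\tw(u\wedge v))=\tw(\varphi(u)\wedge\varphi(v))=f_{\varphi}(u)\wedge_\tw f_{\varphi}(v),\\
f_{\varphi}(u\to_\tw v)&=\varphi(\tw(u\to v))=\tw(\varphi(u)\to\varphi(v))=f_{\varphi}(u)\to_\tw f_{\varphi}(v).
\end{align*}
Hence $f_{\varphi}$ is a homomorphism of Brouwerian algebras.

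It remains to track the distinguished data. Since $\imath=(\ln e\wedge e)^2$ is again a term and $\varphi$ preserves $\ln,\wedge,\cdot,e$, I immediately get $f_{\varphi}((\ln e_1\wedge e_1)^2)=(\ln e_2\wedge e_2)^2$. For the Boolean filter I would invoke the description $F_A=\{\tw(x\vee\ln x):x\in A\}$: a generator $\tw(x\vee\ln x)\in F_{A_1}$ is sent, using commutation with $\tw$ together with preservation of $\vee$ and $\ln$, to
\[
f_{\varphi}(\tw(x\vee\ln x))=\tw(\varphi(x)\vee\ln\varphi(x))\in F_{A_2},
\]
so $f_{\varphi}(F_{A_1})\subseteq F_{A_2}$. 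I expect no genuine obstacle once $\varphi\circ\tw=\tw\circ\varphi$ is in hand: everything reduces to the fact that all operations of ${\bf H}_{\bf A}$, the element $\imath$, and the generators of $F_A$ are built from $\tw$ and the ambient involutive residuated lattice operations, each of which $\varphi$ respects. The only point demanding care is to express $F_A$ through its generating form $\tw(x\vee\ln x)$ rather than through the dense-element condition, so that preservation under $f_{\varphi}$ becomes transparent.
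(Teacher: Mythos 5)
Your proof is correct and is exactly the routine argument the paper has in mind (the lemma is stated there without proof, being considered immediate): everything follows from the fact that $\varphi$ commutes with the term $\tw(x)=(x\wedge e)^2$, together with the generator description $F_A=\{\tw(x\vee\ln x):x\in A\}$. No issues.
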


Now, consider the category $\mathcal{BF}_\text{cy}$ with objects triples $(\m H, \imath, F)$, where  $\m H$ is a Brouwerian algebra, $\imath\in H$ and $F\subseteq H$ a Boolean filter; as morphisms we take Brouwerian algebra homomorphisms $f:{\bf H}_1\to{\bf H}_2$ such that $f(\imath_1)=\imath_2$ and $f(F_1)\subseteq F_2$. Note that $\mathcal{NT}$ defines a category where the morphisms are the algebraic homomorphisms. We can conclude:

\begin{thm}\label{NTequivalence} The categories $\mathcal{NT}$ and $\mathcal{BF}_\text{cy}$ are equivalent.\end{thm}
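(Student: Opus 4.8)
The plan is to assemble the results of this subsection into a pair of mutually quasi-inverse functors. Define $\m G\colon \mathcal{BF}_\text{cy}\to\mathcal{NT}$ on objects by $\m G(\m H,\imath,F)={\bf Tw}(\m H,\imath,F)$, which is an object of $\mathcal{NT}$ by Lemma \ref{NT1}, and on a morphism $f$ by $\m G(f)=\varphi_f$, using the lemma guaranteeing that $\varphi_f(a,b)=(f(a),f(b))$ is an $\mathcal{NT}$-morphism. In the other direction define $\m K\colon \mathcal{NT}\to\mathcal{BF}_\text{cy}$ by $\m K({\bf A})=({\bf H_A},\imath,F_A)$ with $\imath=\tw(\ln e)=(\ln e\wedge e)^2$ and $\m K(\varphi)=f_\varphi=\varphi|_{{\bf H_A}}$, which is well defined and lands in $\mathcal{BF}_\text{cy}$ by the preceding two lemmas. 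Functoriality of both is immediate: $\m G$ sends a composite to the coordinatewise application of the composite and $\m K$ sends it to the restriction of the composite, so in each case identities and composition are preserved on the nose.

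For the composite $\m G\m K\cong \mathrm{Id}_{\mathcal{NT}}$ I would use the family $\phi_{\bf A}\colon {\bf A}\to{\bf Tw}({\bf H_A},\imath,F_A)$, $x\mapsto(\tw(x),\tw(\ln x))$, which is an isomorphism by Theorem \ref{NT2}. To check naturality for $\varphi\colon{\bf A}_1\to{\bf A}_2$ I would verify $\varphi_{f_\varphi}\circ\phi_{{\bf A}_1}=\phi_{{\bf A}_2}\circ\varphi$; both composites send $x$ to $(\varphi(\tw(x)),\varphi(\tw(\ln x)))$, so the identity reduces to the fact that the homomorphism $\varphi$ commutes with the term $\tw(x)=(x\wedge e)^2$ and with $\ln$, which is clear.

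The more delicate composite is $\m K\m G\cong \mathrm{Id}_{\mathcal{BF}_\text{cy}}$, for which the candidate natural isomorphism is the family $\psi=\psi_{(\m H,\imath)}$ with $\psi(a)=(a,a\to\imath)$. Starting from $(\m H,\imath,F)$, the image of $\tw_\text{Tw}$ on ${\bf Tw}(\m H,\imath,F)$ is exactly $\{(a,a\to\imath):a\in H\}$ (all these pairs lie in the subalgebra, as the final paragraph of the proof of Lemma \ref{NT1} shows), so it coincides with the conucleus image already analyzed for the full twist structure, and $\psi$ is a Brouwerian-algebra isomorphism from $\m H$ onto ${\bf H}_{{\bf Tw}(\m H,\imath,F)}$. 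It then remains to verify that $\psi$ transports the two extra pieces of data correctly. For the distinguished element this is a short computation: the recovered element is $\tw_\text{Tw}(\ln(e,\imath))=\tw_\text{Tw}(\imath,e)=(\imath,\imath\to\imath)=\psi(\imath)$.

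The main obstacle I expect is showing that $\psi$ identifies $F$ with the recovered Boolean filter $F'=\{\tw_\text{Tw}(X\vee\ln X):X\in\text{Tw}(\m H,\imath,F)\}$, i.e.\ $\psi(F)=F'$. The inclusion $F'\subseteq\psi(F)$ follows because any $X=(a,b)\in\text{Tw}(\m H,\imath,F)$ satisfies $a\vee b\in F$ while $\tw_\text{Tw}(X\vee\ln X)=\tw_\text{Tw}(a\vee b,a\wedge b)=(a\vee b,(a\vee b)\to\imath)=\psi(a\vee b)$. For the reverse inclusion $\psi(F)\subseteq F'$ I would, given $c\in F$, realize $\psi(c)$ as $\tw_\text{Tw}(X\vee\ln X)$ by taking $X=(c,c\wedge\imath)$, which lies in $\text{Tw}(\m H,\imath,F)$ precisely because $c\in F$, and for which $X\vee\ln X=(c,c\wedge\imath)$ and $\tw_\text{Tw}(c,c\wedge\imath)=(c,c\to\imath)=\psi(c)$. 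Finally the naturality of $\psi$ over a morphism $f$ in $\mathcal{BF}_\text{cy}$ is a direct coordinatewise check. Combining the two natural isomorphisms $\m G\m K\cong\mathrm{Id}$ and $\m K\m G\cong\mathrm{Id}$ yields the claimed equivalence of categories.
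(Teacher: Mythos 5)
Your proof is correct and follows exactly the route the paper intends: it states Theorem \ref{NTequivalence} as an immediate consequence of Lemma \ref{NT1}, Theorem \ref{NT2} and the two morphism lemmas, which is precisely the assembly you carry out. The one step the paper leaves entirely implicit --- that $\psi$ carries $F$ onto the recovered Boolean filter of ${\bf H}_{{\bf Tw}(\m H,\imath,F)}$ --- you verify correctly via the witnesses $(a\vee b, a\wedge b)$ and $(c, c\wedge\imath)$.
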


\subsection{Involutive-image $\mathcal{NCA}$    }

In this section we consider a variety in which  the categorical
adjunction in Theorem \ref{adjunction} is strengthened. Unlike in
Nelson-type algebras, the Nelson conucleus $\tw$ doesn't need to be
a term function.

\subsubsection{Twist products and dualizing elements}

Let $\bf L$ be a commutative involutive residuated lattice and let $f=\ln e$. We
define the operation $a\oplus b = \ln ((\ln b)\cdot (\ln a))$. It is well known \cite{GJKO} that this operation is associative, commutative and satisfies, among other things:
$$a\oplus(b\wedge c)=a\oplus b \wedge a\oplus c \  \ \ \mbox{ and  } \ \mbox{
     if } a\leq b \mbox{ then } a\oplus c\leq b\oplus c.$$

\begin{lem}\label{l:filter}
Let $\bf L$ be a commutative involutive residuated lattice, $\imath\in L$ and $F$ be a lattice filter of $L$ that contains the element $e \oplus \imath = f\to \imath$. Then the set
\begin{align*}\text{Tw}(L,\imath,F) = \{(a,b)\in L\times L^\partial: a\cdot b\leq \imath,a\oplus b\in F\}\end{align*}
is the universe of a subalgebra of ${\bf Tw(L,\imath)}$ which is a twist-product over ${\bf L}$.\end{lem}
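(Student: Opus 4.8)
The plan is to verify that $\text{Tw}(L,\imath,F)$ is closed under the operations of ${\bf Tw}(L,\imath)$ and contains its unit, and then to exhibit the generating elements $(a,a\to\imath)$ inside it. Since $\bf L$ is commutative its involution is cyclic, so $\imath$ is cyclic and $\text{Tw}(L,\imath)=\{(a,b):ab\le\imath\}$ is a subalgebra of ${\bf Tw}(L)$ (the double-division conucleus image by $(e,\imath)$), closed under $\wedge,\vee,\cdot,\ld,\rd,\ln$. Consequently, for any of these operations applied to members of $\text{Tw}(L,\imath,F)$ the resulting pair $(c,d)$ already satisfies $cd\le\imath$; what must be checked is only that the additional constraint $c\oplus d\in F$ is preserved. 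Moreover, by Lemma~\ref{lem:propiedades_involucion} the operations $\vee,\ld,\rd$ are term-definable from $\cdot,\wedge,\ln$, so it suffices to treat $\ln$, $\wedge$, $\cdot$ and the unit $(e,\imath)$.

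The easy cases come first. The unit $(e,\imath)$ lies in $\text{Tw}(L,\imath,F)$ because $e\oplus\imath\in F$ by hypothesis. Closure under $\ln$ is immediate, since $\ln(a,b)=(b,a)$ and $b\oplus a=a\oplus b\in F$ by commutativity of $\oplus$. For $\wedge$, writing $(a,b)\wedge(a',b')=(a\wedge a',b\vee b')$ and using the distribution $(a\wedge a')\oplus z=(a\oplus z)\wedge(a'\oplus z)$ together with monotonicity, I get $a\oplus(b\vee b')\ge a\oplus b\in F$ and $a'\oplus(b\vee b')\ge a'\oplus b'\in F$; since $F$ is an up-closed meet-subsemilattice, their meet $(a\wedge a')\oplus(b\vee b')$ lies in $F$.

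The product is the main obstacle. Here $(a,b)\cdot(a',b')=(aa',c)$ with $c=(a\to b')\wedge(a'\to b)$, and I must show $aa'\oplus c\in F$. I will use the identities $a\to b=\ln a\oplus b$ and $\ln(aa')=a\to\ln a'$, both consequences of Lemma~\ref{lem:propiedades_involucion} and contraposition. The crucial estimate is $aa'\oplus\ln a\ge a'$: indeed $aa'\oplus\ln a=\ln\big(a\cdot\ln(aa')\big)=\ln\big(a\,(a\to\ln a')\big)\ge\ln(\ln a')=a'$, using $a\,(a\to\ln a')\le\ln a'$. Distributing $\oplus$ over the meet defining $c$ and applying this estimate gives $aa'\oplus(a\to b')=(aa'\oplus\ln a)\oplus b'\ge a'\oplus b'\in F$ and, symmetrically, $aa'\oplus(a'\to b)\ge a\oplus b\in F$; hence their meet $aa'\oplus c\in F$. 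Finally, to see that $\text{Tw}(L,\imath,F)$ is a twist-product I check that each $(a,a\to\imath)$ belongs to it: from $a\cdot\ln a\le f$ one has $a\oplus\ln a\ge e$, so $a\oplus(a\to\imath)=a\oplus\ln a\oplus\imath\ge e\oplus\imath\in F$, while $a\cdot(a\to\imath)\le\imath$ by residuation. This places all elements $(a,\imath\rd a\wedge a\ld\imath)=(a,a\to\imath)$ in $\text{Tw}(L,\imath,F)$, so Definition~\ref{def:twist-product} applies and the proof is complete.
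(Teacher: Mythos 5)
Your proof is correct and follows essentially the same route as the paper's: reduce to closure under $\ln$, $\wedge$, $\cdot$ and membership of the elements $(a,a\to\imath)$, with the product case settled by the residuation estimate $a\,(a\to\ln a')\le \ln a'$ (your inequality $aa'\oplus\ln a\ge a'$ is exactly the paper's step $(a\cdot(a\to(a'\to f)))\to b'\ge (a'\to f)\to b'$, just packaged in $\oplus$-notation via associativity). The only cosmetic difference is that you verify the unit $(e,\imath)$ directly from the hypothesis $e\oplus\imath\in F$, whereas the paper obtains it as the case $a=e$ of the $(a,a\to\imath)$ computation.
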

\begin{proof} Let $(a,b),(a',b')\in \text{Tw}(L,\imath,F)$, then $( a\to f)\to b, (a'\to f)\to b'\in F$.
\begin{itemize}
    \item $\text{Tw}(L,\imath,F)$ is clearly closed under $\ln$ by commutativity of $\oplus$.
    \item $(a,b)\wedge(a',b')=(a\wedge a',b\vee b')$, and
        \begin{align*}(a\wedge a')\oplus(b\vee b') &= a\oplus(b\vee b') \wedge a'\oplus(b\vee b')\geq a\oplus b \wedge a'\oplus b'\in F.\end{align*}
    \item $(a,b)\cdot(a',b')=(a\cdot a',a\to b'\wedge a'\to b)$, and
      \begin{align*}(a\cdot a')\oplus(a\to b'\wedge a'\to b)    &= \left((a\cdot a')\oplus(a\to b')\right) \wedge \left((a\cdot a')\oplus(a'\to b)\right)\\
                                                                                                                            &\hspace{-2cm}=((a\cdot a')\to f)\to(a\to b') \wedge ((a\cdot a')\to f)\to(a'\to b)\\
                                                                                                                            &\hspace{-2cm}=((a\cdot(a\to (a'\to f))\to b'))\wedge ((a'\cdot(a'\to (a\to f))\to b))\\
                                                                                                                            &\hspace{-2cm}\geq ((a'\to f)\to b')\wedge((a\to f)\to b)\in F.\end{align*}
    \item $\tw_{\text{Tw}}(a,b)=(a,a\to\imath)$, and
                \begin{align*}a\oplus(a\to\imath) &= (a\to f)\to (a\to\imath) = (a(a\to f))\to\imath\geq f\to\imath = e \oplus \imath \in F.\end{align*}
\end{itemize}\end{proof}

 Note that $e \oplus \imath \in F$ automatically follows from assuming that $e \in F$ in the special case where $f=0$ is the bottom element; in that case as $e=f\to f$, so $e$ will be the top element.
The next lemma provides a hint on how to define filters starting from the twist structure.

\begin{lem}\label{l:filter2}
Let $\bf L$ be an integral commutative involutive residuated lattice. Consider $\imath\in L$ and  $F$ a lattice filter of $L$. Then $c\in F$ if and only if there exists $(a,b)\in Tw(L,\imath,F)$ such that $c=a \oplus b$.
\end{lem}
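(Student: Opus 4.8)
The plan is to prove the two implications separately. The reverse implication is immediate: I would simply observe that membership of a pair $(a,b)$ in $\text{Tw}(L,\imath,F)$ already requires $a\oplus b\in F$ by the very definition of that set, so if $c=a\oplus b$ for some $(a,b)\in\text{Tw}(L,\imath,F)$, then $c\in F$ with nothing further to verify. All the content therefore lies in the forward direction, and the task there is to produce, for an arbitrary $c\in F$, an explicit witness pair whose $\oplus$-value is $c$ and which satisfies the two defining conditions $a\cdot b\leq\imath$ and $a\oplus b\in F$.

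The key preliminary observation I would record is that the hypotheses of the lemma pin down the bottom element of $\bf L$. Since $\bf L$ is integral, $e$ is its top element, and since the involution $\ln$ is order-reversing (a consequence of the De Morgan law $\ln(x\vee y)=\ln x\wedge\ln y$ from Lemma~\ref{lem:propiedades_involucion}(3)), the element $f=\ln e$ must be the least element of $L$. Recall also that $f$ is the unit for $\oplus$, i.e.\ $f\oplus x=\ln(\ln x\cdot\ln f)=\ln(\ln x\cdot e)=\ln\ln x=x$. With these two facts in hand I would propose the witness $(\ln e,c)=(f,c)$ and check the two membership conditions directly: the product condition holds because $c\leq e$ gives $f\cdot c\leq f\cdot e=f\leq\imath$, using that $f$ is the bottom element; and the computation $f\oplus c=\ln(\ln c\cdot\ln f)=\ln(\ln c\cdot e)=\ln\ln c=c$ simultaneously identifies $\ln e\oplus c$ with $c$ and verifies the filter condition, since $c\in F$ by assumption. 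Hence $(\ln e,c)\in\text{Tw}(L,\imath,F)$ with $c=\ln e\oplus c$, completing the forward direction. (Symmetrically, $(c,\ln e)$ serves equally well, by commutativity of $\oplus$.)

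There is no serious obstacle here once the right witness is found; the only genuinely delicate point is recognizing that integrality is precisely what is needed to force $f=\ln e$ to be the least element of $L$, and that this is exactly what guarantees $f\cdot c\leq\imath$ for \emph{every} choice of $\imath$. Without integrality the element $\ln e$ need not lie below $\imath$, and this particular witness would break down, so I would be careful to flag that the hypothesis is used in an essential way at precisely this step rather than merely for convenience.
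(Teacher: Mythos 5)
Your proof is correct and follows essentially the same route as the paper: the paper also notes that $\ln e$ is the bottom element and takes the witness $(c,\ln e)$, which is your pair with the coordinates swapped (equivalent by commutativity of $\oplus$, as you observe). Your write-up just makes explicit the two facts the paper leaves implicit, namely that integrality forces $\ln e$ to be the least element and that $\ln e$ is the unit for $\oplus$.
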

\begin{proof} Note that $0 :=\ln e$ is the bottom element.
If $(a,b)\in Tw(L,\imath,F)$, then by definition $c=a\oplus b\in F$. Conversely, if $c\in F$ then $c\oplus 0=c\in F$ and $c\cdot 0=0\leq \imath$, so $(c,0)\in Tw(L,\imath,F)$ satisfies what we wanted. \end{proof}

\subsubsection{Involutive-image Nelson Conucleus Algebras}

If $({\bf A},\tw)\in \mathcal{NCA}$ is such that $\bf A_{\tw}$ is an involutive residuated lattice, we want to find a lattice filter $F_A$ on $\bf A_{\tw}$ such that ${\bf A}\cong {\bf Tw}(\bf A_{\tw},\imath,F_A)$. Given the characterization in Lemma~\ref{l:filter2},
 $F_A$ should contain the elements of the form $a \oplus b$, where $x=(a,b) \in {\bf Tw}(\bf A_{\tw},\imath,F_A)$. Note that $\tw(x)=(a, a \ld \imath)$ and $\tw(\ln x)=(b, b \ld \imath)$. The identification between $a$ and $(a, a \ld \imath)$ and between $b$ and $(b, b \ld \imath)$ provided by $\psi_{(\m L, \imath)}$ guides us in defining
$$F_A = \{\tw(x) \oplus \tw(\ln x): x\in A\}.$$

We define the variety $\mathcal{INCA}$ of algebras $({\bf A},\tw)$ such that $\bf A$ is an involutive commutative residuated lattice with a bottom element $\bot$, $(\hat{\bf A},\tw)\in \mathcal{NCA}$ where $\hat{\bf A}$ is the $\bot$-less reduct of $\bf A$, and such that the following equation holds:
\begin{enumerate}[{(IT}1{)}]
    \item $\tw(\tw(\tw(x)\to\bot)\to\bot)=\tw(x)$.
\end{enumerate}

We note that $\m A$ is also bounded above with top element $\top=\ln \bot= \bot\to\bot$.

In this case, $\bf L_A:= ({\bf A}_{\tw}, 0, 1)$ will be an integral commutative residuated lattice, with $0=\tw(\bot)$ as negation constant element and $1=\tw(\top)=\tw(e)$ as unit element. If we define for all $a \in L_A$ $$\neg_{\tw} a=a\to_{\tw} 0,$$ since $a=\tw(a)$, we have $\neg_\tw a= a \to_\tw 0=\tw(a \Rightarrow 0)= \tw(\tw(a) \to \tw(\bot))=\tw(\tw(a) \to \bot)$. So if $({\bf A},\tw)\in \mathcal{INCA}$, the residuated lattice $\bf L_A:= ({\bf A}_{\tw}, 0, 1)$ is bounded, commutative, and (IT1) translates into: for all $a \in A_\tw$
$$\neg_\tw \neg_\tw a= a.$$

In Lemma~\ref{l:filter}, we saw that if $\bf L$ is a commutative residuated lattice where the dualizing element is the least element and $\imath\in L$, then ${\bf Tw({\bf L},\imath, F)}$ defines a subalgebra of ${{\bf Tw}({\bf L},\imath)}$. Note that the latter is in $\mathcal{INCA}$.

\begin{ex}As a particular case, if $\bf L$ is a bounded commutative residuated lattice satisfying $\neg\neg x=x$, we have that $\tw(x)=x$ is a Nelson conucleus, so $({\bf L},\tw)\in\mathcal{INCA}$ when we consider $\ln x = \neg x$, and by Theorem \ref{Teorema_representacion} we have that ${\bf L}$ embeds into ${\bf Tw}({\bf L},0)$ by the mapping $x\mapsto (x,\neg x)$.\end{ex}

In the notation involving $\bot$, note that we have
\begin{align*}F_A &= \{\tw(x) \oplus \tw(\ln x): x\in A\} = \{(\neg_\tw \tw(x))\to_\tw \tw(\ln x): x\in A\}\\
&=  \{(\tw(x)\to_\tw 0)\to_\tw \tw(\ln x): x\in A\}= \{\tw((\tw(x)\Rightarrow \bot) \Rightarrow \tw(\ln x)): x\in A\}\\
&= \{\tw(\tw(\tw(x)\to \bot) \to \ln x): x\in A\}.\end{align*}
We will need the following technical result.

\begin{lem}\label{filterform} If $({\bf A},\tw)\in \mathcal{NCA}$ has a lower bound $\bot$, then with the previous notation $F_A =\{\tw(\ln z) :\tw(z)=0, z \in A\}$.
\end{lem}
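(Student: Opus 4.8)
The plan is to prove the set equality by double inclusion, working throughout with the explicit form $F_A = \{\tw(\tw(\tw(x)\to\bot)\to\ln x) : x\in A\}$ derived just above the statement. First I would record the background facts I will use repeatedly. Since $\tw$ is a conucleus, (C\ref{Cord1}) gives $\tw(\bot)\le\bot$, and as $\bot$ is least we also have $\bot\le\tw(\bot)$; hence $0=\tw(\bot)=\bot$, so $0$ is the least element and $\top=\ln 0$ is the greatest. In the present involutive-image setting ${\bf A}_\tw$ is integral and involutive, with unit $e=\tw(e)=\tw(\top)$ and with $\neg_\tw a = a\to_\tw 0$ satisfying $\neg_\tw\neg_\tw a = a$; this last identity is exactly the defining equation (IT1), namely $\tw(\tw(\tw(x)\to\bot)\to\bot)=\tw(x)$, and it is the tool that drives the hard inclusion.

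For $F_A\subseteq\{\tw(\ln z):\tw(z)=0\}$ I would fix $x$ and set $w=\tw(\tw(x)\to\bot)$ and $z=w\cdot x$. By Lemma~\ref{lem:propiedades_involucion} together with commutativity, $\ln(w\cdot x)=w\to\ln x$, so $\tw(\ln z)=\tw(w\to\ln x)$, which is precisely the chosen element of $F_A$. It then remains to verify $\tw(z)=0$: by (T\ref{Tprod}) and idempotence of $\tw$ one has $\tw(z)=\tw(w)\,\tw(x)=w\,\tw(x)$, and since $w\le\tw(x)\to\bot$ by (C\ref{Cord1}), residuation yields $w\,\tw(x)\le(\tw(x)\to\bot)\,\tw(x)\le\bot$; as $\bot$ is least, $\tw(z)=\bot=0$. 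Thus every element of $F_A$ has the form $\tw(\ln z)$ with $\tw(z)=0$. Note this half uses only that $\tw$ is a Nelson conucleus and that ${\bf A}$ is involutive.

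For the reverse inclusion, given $z$ with $\tw(z)=0$, I would apply the definition of $F_A$ to the witness $x=\ln z$ (so $\ln x = z$ and $\tw(x)=\tw(\ln z)$). Writing $v=\tw(\tw(\ln z)\to\bot)$, two applications of (C\ref{Cimpl}) collapse the inner and then the outer implication onto $\tw$-images: first $v=\tw(\tw(\ln z)\to\tw(\bot))=\tw(\ln z)\to_\tw 0=\neg_\tw\tw(\ln z)$, and then $\tw(v\to z)=\tw(v\to\tw(z))=v\to_\tw 0=\neg_\tw v=\neg_\tw\neg_\tw\tw(\ln z)$, which equals $\tw(\ln z)$ by (IT1). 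Hence $\tw(\ln z)=\tw(\tw(\tw(\ln z)\to\bot)\to z)\in F_A$. I expect this backward inclusion to be the main obstacle: the forward inclusion is pure conucleus arithmetic, whereas the return genuinely requires passing through the involution of ${\bf A}_\tw$ and invoking the double-negation law (IT1) (equivalently, the integrality of ${\bf A}_\tw$, which forces $\tw(\top)\to_\tw c=c$). Without this involutive structure the two natural witnesses $x=\ln z$ and $x=z$ only produce $\neg_\tw\neg_\tw\tw(\ln z)$ and $\tw(\top)\to_\tw\tw(\ln z)$ respectively, neither of which simplifies in general.
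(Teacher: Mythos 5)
Your proof is correct, and for the substantive inclusion $F_A \subseteq \{\tw(\ln z) : \tw(z)=0\}$ it coincides with the paper's: the same witness $z = x\cdot\tw(\tw(x)\to\bot)$, the same identification $\tw(\ln z)=\tw(\tw(\tw(x)\to\bot)\to\ln x)$ via Lemma~\ref{lem:propiedades_involucion}, and the same computation $\tw(z)=\tw(x)\cdot\tw(\tw(x)\to\bot)=0$. The two arguments part ways only on the reverse inclusion: the paper takes the witness $x=z$ and asserts $\tw(\ln z)=\tw(z)\oplus\tw(\ln z)=0\oplus\tw(\ln z)$, which tacitly uses $\neg_\tw 0=1$, i.e.\ $\tw(\top)=\tw(e)$ (integrality of ${\bf A}_\tw$), whereas you take the witness $x=\ln z$ and collapse $\neg_\tw\neg_\tw\tw(\ln z)$ to $\tw(\ln z)$ by (IT1). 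Your closing remark is on target: neither simplification is available for a bare member of $\mathcal{NCA}$ with a bottom --- in $({\bf Tw}({\bf S}_3,0),\tw_{\text{Tw}})$ one computes $\{\tw(\ln z):\tw(z)=0\}=A_\tw$ while $F_A=\{0,\top\}$, so the lemma as literally stated fails and must be read in the $\mathcal{INCA}$ context of ``the previous notation''. There, (IT1) applied to $x=e$ yields $\tw(\top)=\tw(e)$, so both routes are legitimate; the paper's uses a formally weaker consequence of (IT1) than yours. One nitpick: your parenthetical claiming (IT1) is ``equivalent'' to integrality of ${\bf A}_\tw$ overstates the relationship --- (IT1) implies $\tw(\top)=e$ but not conversely --- though nothing in your argument depends on this.
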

\begin{proof}
Recall that $F_A=  \{\tw(x) \oplus \tw(\ln x): x\in A\}$
Clearly, if $\tw(z)=0$, for some $z \in A$, then $\tw(\ln z)=\tw(z) \oplus \tw(\ln z) \in F_A$. Conversely, let $x \in A$ and define $z=x\cdot \tw(\tw(x)\to \bot)$. We have
$\tw(x) \oplus \tw(\ln x)=\tw(\tw(\tw(x)\to \bot) \to \ln x)
=\tw(\ln(x \cdot \tw(\tw(x)\to \bot))=\tw(\ln z).$
Also,
\begin{align*}\tw(z) &= \tw(x)\cdot \tw(\tw(x)\to \bot) = \tw(\tw(x)\cdot (\tw(x)\to \bot)) = \tw(\bot)=0.\end{align*}
\end{proof}

\begin{thm}If $({\bf A},\tw)\in \mathcal{INCA}$, $1=\tw(e)$ and $\imath=\tw(\ln e)$, then $F_A$ is a lattice filter of $\bf L_A$ and
$${\bf A}\cong {{\bf Tw}({\bf L}_{\bf A},\imath,F_A)}.$$\end{thm}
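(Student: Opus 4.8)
The plan is to show that the embedding $\phi=\phi_{({\bf A},\tw)}\colon x\mapsto(\tw(x),\tw(\ln x))$ of Theorem~\ref{Teorema_representacion}, which is already an injective homomorphism of ${\bf A}$ into ${\bf Tw}({\bf L}_{\bf A},\imath)$, in fact maps ${\bf A}$ \emph{onto} the subalgebra ${\bf Tw}({\bf L}_{\bf A},\imath,F_A)$. Before this can be phrased one must check that $F_A$ is a lattice filter of ${\bf L}_{\bf A}$, so that Lemma~\ref{l:filter} applies and yields the twist-product ${\bf Tw}({\bf L}_{\bf A},\imath,F_A)$. For this I will use the reformulation $F_A=\{\tw(\ln z):\tw(z)=0,\ z\in A\}$ from Lemma~\ref{filterform}, together with the observation that $0=\tw(\bot)=\bot$ (indeed $\tw(\bot)\le\bot$ by (C\ref{Cord1}) and $\bot\le\tw(\bot)$ since $\bot$ is least).

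For the filter properties: the top $1=\tw(e)=\tw(\top)$ lies in $F_A$ via the witness $z=\bot$, since $\tw(\bot)=0$ and $\tw(\ln\bot)=\tw(\top)=1$. Closure under meet: given $\tw(\ln z_1),\tw(\ln z_2)\in F_A$ with $\tw(z_1)=\tw(z_2)=0$, set $z=z_1\vee z_2$; then $\tw(z)=\tw(z_1)\vee\tw(z_2)=0$ by (T\ref{Tsup}), while De Morgan (Lemma~\ref{lem:propiedades_involucion}(3)) and (C\ref{Cinf}) give $\tw(\ln z)=\tw(\ln z_1\wedge\ln z_2)=\tw(\ln z_1)\wedge_\tw\tw(\ln z_2)$, so the meet lies in $F_A$. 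Upward closure: if $\tw(\ln z)=c\le d$ with $\tw(z)=0$ and $d=\tw(d)\in L_A$, then $z'=z\wedge\ln d$ satisfies $\tw(z')=\tw(\tw(z)\wedge\tw(\ln d))=\tw(\bot)=0$ by (C\ref{Cinf}) and $\tw(\ln z')=\tw(\ln z\vee d)=c\vee d=d$. Since $f=\ln e=\neg_\tw 1=0$ is the bottom of ${\bf L}_{\bf A}$, the element $e\oplus\imath=f\to\imath=1$ already lies in $F_A$, so Lemma~\ref{l:filter} applies and ${\bf Tw}({\bf L}_{\bf A},\imath,F_A)$ is a twist-product over ${\bf L}_{\bf A}$.

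With the subalgebra in hand, one inclusion of the isomorphism is immediate: for $x\in A$ we have $\tw(x)\cdot\tw(\ln x)=\tw(x\cdot\ln x)\le\tw(\ln e)=\imath$ (as in Theorem~\ref{Teorema_representacion}) and $\tw(x)\oplus\tw(\ln x)\in F_A$ by the very definition of $F_A$, so $\phi(A)\subseteq\text{Tw}({\bf L}_{\bf A},\imath,F_A)$. Moreover $\phi(\bot)=(0,1)$, the bottom of the twist-product, so $\phi$ is a homomorphism in the full bounded involutive signature. As $\phi$ is injective, everything reduces to surjectivity onto $\text{Tw}({\bf L}_{\bf A},\imath,F_A)$.

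Surjectivity is the heart of the matter and the main obstacle. Given $(a,b)\in\text{Tw}({\bf L}_{\bf A},\imath,F_A)$ — so $a,b\in L_A$, $ab\le\imath$, and $a\oplus b\in F_A$ — I must produce $x\in A$ with $\tw(x)=a$ and $\tw(\ln x)=b$. The naive choice $x_0=a\wedge\ln b$ already gives the correct first coordinate $\tw(x_0)=a\wedge_\tw\tw(\ln b)=a$ (using $ab\le\imath$, hence $a\le\ln b$), but its second coordinate $\tw(\ln x_0)=\tw(\ln a)\vee b=\tw(\ln a)$ is in general strictly above $b$, and repairing it is exactly what the filter hypothesis supplies. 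Using Lemmas~\ref{filterform} and~\ref{l:filter2} fix a witness $w$ with $\tw(w)=0$ and $\tw(\ln w)=a\oplus b$, and define $x$ by the analogue of the element used in Theorem~\ref{NT2}, namely
\begin{align*}
x=\big[(w\wedge\ln w)\vee\ln(a\Rightarrow b)\vee\ln(b\Rightarrow a)\big]\wedge(b\Rightarrow a),
\end{align*}
where $a\Rightarrow b=\tw(a)\to b$, with products replacing the meets that coincided with products in the Brouwerian setting of Theorem~\ref{NT2}. The genuinely technical step is then to compute $\tw(x)$ and $\tw(\ln x)$ and confirm they equal $a$ and $b$; this will rely on (T\ref{Tsup}), (T\ref{Tprod}), the identity $\tw(\ln(x\Rightarrow y))=\tw(x)\cdot\tw(\ln y)$ (the commutative analogue of \eqref{eq:implicadebil_inf}, obtained from Lemma~\ref{lem:propiedades_involucion} and (T\ref{Tprod})), (C\ref{Cimpl}), distributivity, and the defining properties $\tw(w)=0$, $\tw(\ln w)=a\oplus b$ of the witness, mirroring the coordinate computations in Theorem~\ref{NT2}. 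Once both coordinates check out, $\phi$ is a bijection onto $\text{Tw}({\bf L}_{\bf A},\imath,F_A)$, hence the desired isomorphism ${\bf A}\cong{\bf Tw}({\bf L}_{\bf A},\imath,F_A)$.
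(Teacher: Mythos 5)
Your handling of $F_A$ is correct and essentially identical to the paper's (same appeal to Lemma~\ref{filterform}, same witnesses for the top, for meets via $z_1\vee z_2$, and for upward closure via $z\wedge\ln d$), and reducing the theorem to surjectivity of $\phi$ onto $\text{Tw}(L_A,\imath,F_A)$ is also the paper's strategy. The gap is in the surjectivity step, which you correctly identify as the heart of the matter but then resolve with an element that does not work: the NT2-style element is tied to the Brouwerian setting and fails here. Concretely, take ${\bf A}={\bf Tw}(\text{\L}_3,0)$ with $\tw=\tw_{\text{Tw}}$, so ${\bf L}_{\bf A}\cong\text{\L}_3$ (the MV-chain $0<c<1$ with $c^2=0$), $\imath=0$, and $F_A=L_A$ (the element $x=(0,0)$ witnesses $0\in F_A$). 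For the pair $(a,b)=(c,0)\in\text{Tw}(L_A,\imath,F_A)$ your witness is $w=(0,c)$ (indeed $\tw(w)=0$ and $\tw(\ln w)=c=a\oplus b$), and identifying $a,b$ with $(c,c),(0,1)\in A$ one computes $a\Rightarrow b=(c,c)$, $b\Rightarrow a=(1,0)$ and $w\wedge\ln w=(0,c)$, so your element is
\begin{align*}
x=\bigl[(0,c)\vee(c,c)\vee(0,1)\bigr]\wedge(1,0)=(c,c),
\end{align*}
which has $\tw(\ln x)=(c,c)$, i.e.\ $c\neq 0=b$. The correct preimage is $(c,0)$.

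The conceptual reason for the failure is that your witness encodes $\tw(w\vee\ln w)=\tw(w)\vee\tw(\ln w)=a\oplus b$ rather than $a\vee b$, and the lattice identity $(a\vee b)\wedge(a\to b)\wedge(b\to a)=a\wedge b$ that drives the coordinate computation in Theorem~\ref{NT2} has no analogue with $\oplus$ in place of $\vee$: in the example $(a\oplus b)\wedge_\tw(a\to_\tw b)\wedge_\tw(b\to_\tw a)=c\wedge c\wedge 1=c$, whereas $a\wedge_\tw b=0$. Replacing various meets by products, as you suggest, does not repair this. The paper's element is structurally different: it is $z=(x\oplus w)\wedge\ln\tw(y)$ with $x\oplus w$ built from $\neg_\tw$ and the residual (effectively $(\neg_\tw x\to w)\wedge\ln\tw(y)$), and the second coordinate is controlled by the genuinely involutive inequality $\neg_\tw\tw(x)\cdot(\neg_\tw\tw(x)\to_\tw\tw(y))\leq\tw(y)$; some such use of $\oplus$ and $\neg_\tw$, exploiting axiom (IT1), is unavoidable at this step.
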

\begin{proof}
We will use Lemma~\ref{filterform} throughout the proof.
First note that  $1 \oplus \imath=\tw(e) \oplus \tw(\ln e) \in F_A$.
Now, we show that $F_A$ is a lattice filter.
\begin{itemize}
    \item $1\in F_A$, because $\tw(\top) \oplus \tw(\ln \top)=\tw(\top) \oplus \tw(\bot)=1 \oplus 0=1$.
    \item If $a=\tw(\ln x)$, where $\tw(x)=0$ and $b=\tw(\ln y)$, where $\tw(y)=0$, for some $x,y\in A$, define $z:=x \jn y$. Then, $a \mt_\tw b=\tw(\ln x) \mt_\tw \tw(\ln y)=\tw( \ln x \mt \ln y)=\tw(\ln (x \jn y))=\tw(\ln z)$. Also, $\tw(z)=\tw(x \jn y)=\tw(x)\jn \tw(y)=0 \jn 0=0$. So, $a \mt_\tw b\in F_A$.
    \item If $a=\tw(\ln x)$, where $\tw(x)=0$ and $b\geq a$ with $b=\tw(y)$ for some $x,y\in A$, consider
            $z=\ln y \wedge x,$
    so $\ln z=y \jn \ln x$. We have $\tw(\ln z)=\tw(y)\vee \tw(\ln x) = b\vee a=b$, and
            $$\tw(z)=\tw(\ln y \wedge x)=\tw(\ln y) \wedge_\tw \tw(x)=\tw(\ln y) \wedge_\tw 0=0,$$
                        So $b \in F_A$.
\end{itemize}

Now, using Theorem \ref{Teorema_representacion}, we will show that $\phi_{\bf A}(A)=\text{Tw}(L_A,\imath,F_A)$. Note that if $(\tw(x),\tw(\ln x))\in \phi_{\bf A}(A)$, then $\tw(x)\oplus \tw(\ln x)\in F_A$, so $\phi_{\bf A}(A)\subseteq\text{Tw}(L_A,\imath,F_A)$. For the other inclusion, consider $(\tw(x),\tw(y))\in \text{Tw}(L_A,\imath,F_A)$. Therefore we have $\tw(x)\cdot\tw(y)\leq \imath = \tw(\ln e)$ and $\tw(x)\oplus \tw(y)\in F_A$. Thus there exists $w\in A$ such that $\tw(x)\oplus \tw(y)= \tw(\ln w)$ and $\tw(w)=0$. We will find $z\in A$ such that $\tw(z)=\tw(x)$ and $\tw(\ln z)=\tw(y)$.

Recall that $\oplus$ is an operation on $L_A$ defined by $a \oplus b= \neg_\tw a \ra_\tw b$, where for $a \in L_A$ we have $\neg_\tw a = a \ra_\tw 0=\tw(a \Rightarrow 0)= \tw(\tw(a) \ra \tw(\bot))= \tw(\tw(a) \ra \bot)$.
For convenience we extend these operations to $A$ by defining for $x,y \in A$,
$\neg_\tw x=\tw(\tw(x) \ra \bot)$ and $x \oplus y=  \neg_\tw x \ra_\tw \tw(y)$. It then follows that $\tw(x \oplus y)=\tw( \neg_\tw x \ra_\tw \tw(y))
=\tw( \neg_\tw x \Rightarrow y)
=\tw( \neg_\tw x) \Rightarrow \tw(y)
= \neg_\tw \tw(x) \Rightarrow \tw(y)
=\tw(x) \oplus \tw(y)$.

Consider $z = x \oplus w \mt \ln \tw(y)$. Then recalling that $\tw(x)\cdot\tw(y)\leq \tw(\ln e)$ implies that $\tw(x)\leq \tw(\ln\tw(y))$
    \begin{align*}
        \tw(z) &= \tw(x \oplus w \mt \ln \tw(y)) = \tw(x) \oplus \tw(w) \mt_\tw \tw(\ln \tw(y)))\\
      &= \tw(x) \oplus 0 \mt_\tw \tw(\ln \tw(y))) = \tw(x)\wedge_\tw \tw(\ln\tw(y)) = \tw(x),
        \end{align*}
            and
         \begin{align*}\tw(\ln z)
            &=  \tw( \ln(\neg_\tw \ra w) \vee\tw(y)) =  \tw(\ln(\neg_\tw x \ra w)) \vee \tw(y)\\
    &=  \tw(\neg_\tw x\cdot \ln w)) \vee \tw(y) =  \tw(\neg_\tw x) \cdot \tw(\ln w)) \vee \tw(y)\\
    &=  \tw(\neg_\tw x) \cdot (\tw(x) \oplus \tw(y))) \vee \tw(y) =  \neg_\tw \tw(x) \cdot (\tw(x) \oplus \tw(y))) \vee \tw(y) = \tw(y).
        \end{align*}
          
\end{proof}

As before the isomorphism on objects extends to a categorical equivalence.

\medskip
\footnotesize 

Manuela Busaniche

Departamento de Matem\'atica, Facultad de Ingenier\'ia Qu\'imica, Universidad Nacional del Litoral - CONICET

Santiago del Estero 2829

Santa Fe, Argentina

\textit{E-mail address:} \texttt{mbusaniche@santafe-conicet.gov.ar}\\

Nikolaos Galatos

University of Denver

2390 S. York St.

Denver, USA

\textit{E-mail address:} \texttt{ngalatos@du.edu}\\

Miguel Andr\'es Marcos

Departamento de Matem\'atica, Facultad de Ingenier\'ia Qu\'imica, Universidad Nacional del Litoral - CONICET

Santiago del Estero 2829

Santa Fe, Argentina

\textit{E-mail address:} \texttt{mmarcos@santafe-conicet.gov.ar}

\end{document}